\definecolor{references}{rgb}{.7,.1,.6}
\tikzstyle directed=[postaction={decorate,decoration={markings,
    mark=at position #1 with {\arrow{>}}}}]
\tikzstyle rdirected=[postaction={decorate,decoration={markings,
    mark=at position #1 with {\arrow{<}}}}]
\tikzset{anchorbase/.style={baseline={([yshift=-0.5ex]current bounding box.center)}},
    tinynodes/.style={font=\tiny,text height=0.75ex,text depth=0.15ex},
    smallnodes/.style={font=\scriptsize,text height=0.75ex,text depth=0.15ex},
    >={Latex[length=1mm, width=1.5mm]}
  }
  \newcommand{\pu}{to [out=90,in=270]}
  \newcommand{\pr}{to [out=0,in=180]}
\colorlet{green}{black!30!green}
\newcommand{\BLUE}[1]{\textcolor[rgb]{0.00,0.00,0.80}{#1}}
\newcommand{\GREEN}[1]{\textcolor[rgb]{0.00,0.70,0.00}{#1}}
\newcommand{\GRAY}[1]{\textcolor[rgb]{0.50,0.50,0.50}{#1}}
\definecolor{CQG}{RGB}{0,153,76}
\definecolor{FS}{RGB}{0,76,153} 
\def\CQGsgn#1{{\color{CQG}#1}}
\newcommand{\arxiv}[1]{\href{https://arxiv.org/abs/#1}{\small  arXiv:#1}}
\newcommand{\googlebooks}[1]{(preview at \href{https://books.google.com/books?id=#1}{google books})}
\newcommand{\numdam}[1]{}
\def\emph#1{{\sl #1\/}}
\def\ie{{\sl i.e.\;}}
\def\eg{{\sl e.g.\;}}
\def\cal#1{\mathcal{#1}}
\def\mf#1{\mathfrak{#1}}
\let\hat=\widehat
\let\tilde=\widetilde
\renewcommand{\to}{\rightarrow}
\newcommand{\scs}{\scriptstyle}
\def\B{\mathbb{B}}
\def\D{\mathbb{D}}
\def\Eb{\mathbb{E}}
\def\N{\mathbb{N}}
\def\X{\mathbb{X}}
\def\Z{\mathbb{Z}}
\def\K{\mathbb{Q}}
\renewcommand{\aa}{\bm{a}}
\newcommand{\bb}{\bm{b}}
\newcommand{\AS}{\EuScript A}
\newcommand{\CS}{\EuScript C}
\newcommand{\KS}{\EuScript K}
\newcommand{\US}{\EuScript U}
\newcommand{\USd}{\dot{\EuScript U}}
\renewcommand{\a}{\alpha}
\renewcommand{\b}{\beta}
\renewcommand{\d}{\delta}
\newcommand{\e}{\varepsilon}
\newcommand{\lbd}{\lambda}
\renewcommand{\phi}{\varphi}
\renewcommand{\theta}{\vartheta}
\newcommand{\slm}{\mf{sl}_m}
\newcommand{\slnn}[1]{\mf{sl}_{#1}}
\newcommand{\glm}{\mf{gl}_m}
\newcommand{\glnn}[1]{\mf{gl}_{#1}}
\newcommand{\SBim}{\mathrm{SBim}}
\newcommand{\SSBim}{\mathrm{SSBim}}
\newcommand{\symg}{\mathfrak{S}}
\newcommand{\E}{{\sf{E}}}
\newcommand{\F}{{\sf{F}}}
\newcommand{\Id}{\mathrm{id}}
\newcommand{\one}{\mathbbm{1}}
\newcommand{\oone}{\mathbf{1}}
\newcommand{\ooone}{\mathsf{1}}
\newcommand{\Br}{\operatorname{Br}}
\newcommand{\cone}{\operatorname{cone}}
\newcommand{\End}{\operatorname{End}}
\newcommand{\Hom}{\operatorname{Hom}}
\newcommand{\largewedge}{\mbox{\Large $\wedge$}}
\newcommand{\rk}{\operatorname{\rk}}
\newcommand{\Sym}{\operatorname{Sym}}
\newcommand{\tw}{\operatorname{tw}}
\newcommand{\wt}{\operatorname{wt}}
\newcommand{\inv}{^{-1}}
\newcommand{\qdeg}{\mathbf{q}}
\newcommand{\tdeg}{\mathbf{t}}
\newcommand{\M}{\mathbb{M}}
\newcommand{\Fr}{\mathbb{F}}
\newcommand{\leftX}{\mathbb{X}}
\newcommand{\rightX}{\mathbb{X}'}
\newcommand{\leftM}{\mathbb{M}}
\newcommand{\rightM}{\mathbb{M}'}
\newcommand{\I}{I}
\newcommand{\MCS}{\mathrm{MCS}}
\newcommand{\MCSmin}{\mathsf{MCS}}
\newcommand{\KMCS}{\mathrm{KMCS}}
\newcommand{\KMCSmin}{\mathsf{KMCS}}
\newcommand{\MCCS}{\mathrm{MCCS}}
\newcommand{\MCCSmin}{\mathsf{MCCS}}
\newcommand{\hComp}{\star}
\newcommand{\qbinom}[2]{\genfrac[]{0pt}{2}{#1}{#2}}
\newcommand{\Schur}{\mathfrak{s}}
\newcommand{\cre}{\mathbf{cr}}
\newcommand{\col}{\mathbf{col}}
\newcommand{\zip}{\mathbf{zip}}
\newcommand{\un}{\mathbf{un}}
\newcommand{\CQGbox}[1]{
\begin{tikzpicture}
\node[draw,  fill=white,rounded corners=4pt,inner sep=3pt] (X) at (0,.75) {$\scs#1$};
\end{tikzpicture}}      
\newcommand{\CQGbbox}[1]{
\begin{tikzpicture}
\node[draw,  fill=white,rounded corners=4pt,inner sep=4pt] (X) at (0,.75) {$\scs#1$};
\end{tikzpicture}}
\theoremstyle{plain}
\newtheorem{thm}{Theorem}[section]
\newtheorem{prop}[thm]{Proposition}
\newtheorem{proposition}[thm]{Proposition}
\newtheorem{cor}[thm]{Corollary}
\newtheorem{lem}[thm]{Lemma}
\newtheorem{lemma}[thm]{Lemma}
\theoremstyle{definition}
\newtheorem{defi}[thm]{Definition}
\newtheorem{definition}[thm]{Definition}
\newtheorem{rem}[thm]{Remark}
\newtheorem{remark}[thm]{Remark}
\newtheorem{exa}[thm]{Example}
\newtheorem{conv}[thm]{Convention}
\newtheorem*{exa-nono}{Example}
\begin{document}

\author{Matthew Hogancamp}

\address{Department of Mathematics, Northeastern University, 360 Huntington Ave, Boston,
MA 02115, USA}
\email{m.hogancamp@northeastern.edu}

\author{David~E.~V.~Rose}

\address{Department of Mathematics, University of North Carolina, 
Phillips Hall, CB \#3250, UNC-CH, 
Chapel Hill, NC 27599-3250, USA
\href{https://davidev.web.unc.edu/}{davidev.web.unc.edu}}
\email{davidrose@unc.edu}

\author{Paul Wedrich}

\address{P.W.: Max Planck Institute for Mathematics,
Vivatsgasse 7, 53111 Bonn, Germany 
AND Mathematical Institute, University of Bonn,
Endenicher Allee 60, 53115 Bonn, Germany
\href{http://paul.wedrich.at}{paul.wedrich.at}}
\email{p.wedrich@gmail.com}

\title{A skein relation for singular Soergel bimodules}

\begin{abstract} We study the skein relation that governs the HOMFLYPT invariant
	of links colored by one-column Young diagrams. 
	Our main result is a categorification of this colored skein relation.
	This takes the form of a homotopy equivalence between two one-sided twisted
	complexes constructed from Rickard complexes of singular Soergel bimodules
	associated to braided webs. Along the way, we prove a conjecture
	of Beliakova--Habiro relating the colored 2-strand full twist complex with 
	the categorical ribbon element for quantum $\slnn{2}$.
\end{abstract}

\maketitle



\section{Introduction}
\label{s:introone}
The HOMFLYPT polynomial is an invariant of framed oriented links 
that is determined by the skein relation
\begin{equation}\label{eq:HOMFLYskein}
	\left[
\begin{tikzpicture}[rotate=90,scale=.5,smallnodes,anchorbase]
	\draw[very thick,->] (1,-1) to [out=90,in=270] (0,1);
	\draw[line width=5pt,color=white,->] (0,-1) to [out=90,in=270] (1,1);
	\draw[very thick,->] (0,-1) to [out=90,in=270] (1,1);
\end{tikzpicture}
\right]
-
\left[
\begin{tikzpicture}[rotate=90,scale=.5,smallnodes,anchorbase]
	\draw[very thick,->] (0,-1) to [out=90,in=270] (1,1);
	\draw[line width=5pt,color=white,->] (1,-1) to [out=90,in=270] (0,1);
	\draw[very thick,->] (1,-1) to [out=90,in=270] (0,1);
\end{tikzpicture}
\right]
= 
(q-q\inv) 
\left[
\begin{tikzpicture}[scale=.5,smallnodes,rotate=90,anchorbase]
	\draw[very thick,->] (0,-2) to (0,0);
	\draw[very thick,->] (1,-2) to (1,0);
\end{tikzpicture}
\right]
\end{equation}
together with its behavior under framing change and disjoint union, and its value on the
unknot. Algebraically, the HOMFLYPT polynomial can be
obtained from the following two-step process.  
First, one considers the type $A$ Hecke algebra $\mathrm{H}_n$, 
i.e. the quotient of the (group algebra of the) $n$-strand braid
group $\Br_n$ by the relation \eqref{eq:HOMFLYskein}. As such, any 
$n$-strand braid $\beta$ determines a well-defined element $[\beta] \in \mathrm{H}_n$. 
Second, there exists a linear map 
$\mathrm{H}_n \rightarrow \Z[q,q\inv, 
	\frac{\mathrm{a} - \mathrm{a}\inv}{q - q\inv}]$, 
known as the \emph{Jones-Ocneanu trace}, 
which gives a Markov trace on the braid group. Applying the latter to
the element of $\mathrm{H}_n$ assigned to a braid gives the HOMFLYPT polynomial
of the braid closure.

The triply-graded Khovanov--Rozansky homology \cite{MR2421131, MR2339573} is a
categorification of the HOMFLYPT polynomial, which can be constructed using a
similar framework. First, the category $\SBim_n$ of type $A_{n-1}$ 
Soergel bimodules provides a categorical analogue of the Hecke algebras $\mathrm{H}_n$. 
Paralleling the relation between $\Br_n$ and $\mathrm{H}_n$ is Rouquier's construction
\cite{0409593,MR2258045}, which associates to each braid (word) $\b$ a complex
$\llbracket \beta \rrbracket$ of Soergel bimodules. In particular, the skein
relation \eqref{eq:HOMFLYskein} is promoted to a homotopy equivalence:
\begin{equation}
	\label{eq:uncoloredskein}
 \cone\left(
		\left\llbracket
\:\begin{tikzpicture}[rotate=90,scale=.5,smallnodes,anchorbase]
	\draw[very thick,->] (1,-1) to [out=90,in=270] (0,1);
	\draw[line width=5pt,color=white,->] (0,-1) to [out=90,in=270] (1,1);
	\draw[very thick,->] (0,-1) to [out=90,in=270] (1,1);
\end{tikzpicture}
	\:\right\rrbracket
	\xrightarrow{f}
	\left\llbracket\:
\begin{tikzpicture}[rotate=90,scale=.5,smallnodes,anchorbase]
	\draw[very thick,->] (0,-1) to [out=90,in=270] (1,1);
	\draw[line width=5pt,color=white,->] (1,-1) to [out=90,in=270] (0,1);
	\draw[very thick,->] (1,-1) to [out=90,in=270] (0,1);
\end{tikzpicture}
	\:\right\rrbracket
	\right)
	\simeq 
	\cone \left(
	\qdeg
	\left\llbracket\:
	\begin{tikzpicture}[scale=.5,smallnodes,rotate=90,anchorbase]
		\draw[very thick,->] (0,-2) to (0,0);
		\draw[very thick,->] (1,-2) to (1,0);
	\end{tikzpicture}
	\:\right\rrbracket
	\xrightarrow{g}
	\qdeg\inv
	\left\llbracket\:
	\begin{tikzpicture}[scale=.5,smallnodes,rotate=90,anchorbase]
		\draw[very thick,->] (0,-2) to (0,0);
		\draw[very thick,->] (1,-2) to (1,0);
	\end{tikzpicture}
	\:\right\rrbracket
	\right)
\end{equation}
for appropriate chain maps $f$ and $g$. Finally a categorical
analogue of the Jones--Ocneanu trace is provided by the Hochschild (co)homology
functor.
\medskip

In recent years, it has proven to be increasingly important to consider not just
categorifications of the HOMFLYPT polynomial, but also its colored variants,
especially those where the coloring consists of 1-column Young
diagrams\footnote{The specialization of the thus colored HOMFLY polynomial at
$\mathrm{a}=q^m$ recovers the $\glm$ Reshetikhin--Turaev invariant with colorings by
fundamental representations, a.k.a. exterior powers of the defining
representation.}. The two relevant algebraic structures in the decategorified
story are the \emph{colored braid groupoid} and the \emph{Hecke algebroid}. Both
can be considered as categories whose objects are finite sequences
\emph{colors}, i.e. natural numbers encoding the numbers of boxes in one-column
Young diagrams, such as $\aa=(a_1,\dots, a_r)$ and $\bb=(b_1,\dots, b_s)$. 

In the colored braid groupoid $\mathbf{Br}$, morphisms from $\aa$ to $\bb$
exist only if $r=s$, in which case they are braids $\beta\in \Br_r$
whose strands connect equal colors $b_{\beta(i)}=a_i$. In the Hecke algebroid
$\mathbf{H}$, morphisms from $\aa$ to $\bb$ exist only if $|\aa|=|\bb|=n$, in
which case they are given by $e_{\bb} \mathrm{H}_n e_{\aa}$, where $e_{\aa}\in
\mathrm{H}_n$ (and similarly $e_{\bb}$) is a certain partially antisymmetrizing
idempotent, modeled on the Young antisymmetrizer for 
$\symg_{a_1}\times\cdots \times\symg_{a_r}$. 
The maps $[-]\colon \Br_n \to \mathrm{H}_n$ now induce a functor 
\[[-] \colon \mathbf{Br} \to \mathbf{H}\] given by sending a colored braid
$\beta_{\aa}$ to the Hecke algebra element obtained by cabling the strands of
$\beta$ with multiplicities specified by $\aa$, and then composing with the
idempotent $e_{\aa}$. 

Computations in the Hecke algebroid are facilitated by a diagrammatic calculus
of braided webs that goes back to Murakami--Ohtsuki--Yamada~\cite{MOY}, 
and can be understood as the $m \to \infty$ limit of the 
web calculus from \cite{CKM}.
For example, 
the decategorification of Theorem~\ref{thm:intro skein} below
gives the following identity in $\mathbf{H}$, 
which to our knowledge is new:
\[
\sum_{s=0}^{b} (-q^{b-1})^s
		\left[
			\begin{tikzpicture}[scale=.35,smallnodes,rotate=90,anchorbase]
				\draw[very thick] (1,-1) to [out=150,in=210] (0,1); 
				\draw[line width=5pt,color=white] (0,-2) to [out=90,in=270] (1,2);
				\draw[very thick] (0,-2) node[right]{$a$}to [out=90,in=270] (1,2) node[left]{$a$};
				\begin{scope} 
				\clip (1,-.5) rectangle (0,.75) ;
				\draw[line width=5pt,color=white] (1,-1) to [out=30,in=330] (0,1); 
				\end{scope}
				\draw[very thick] (1,-1) to [out=30,in=330] node[above,yshift=-2pt]{$s$} (0,1); 
				\draw[very thick] (1,-2) node[right]{$b$} to (1,-1); 
				\draw[very thick] (0,1) to (0,2) node[left]{$b$};
			\end{tikzpicture}
		\right]
		 = (-1)^b q^{-b} \prod_{i=1}^{b} (1-q^{2i}) 
		 \left[
\begin{tikzpicture}[smallnodes,rotate=90,anchorbase,scale=.525]
	\draw[very thick] (0,.25) to [out=150,in=270] (-.25,1) node[left,xshift=2pt]{$b$};
	\draw[very thick] (.5,.5) to (.5,1) node[left,xshift=2pt]{$a$};
	\draw[very thick] (0,.25) to node[right,yshift=-1pt]{$a{-}b$} (.5,.5);
	\draw[very thick] (.5,-1) node[right,xshift=-2pt]{$b$} to [out=90,in=330] (.5,.5);
	\draw[very thick] (-.25,-1)node[right,xshift=-2pt]{$a$} to [out=90,in=270] (0,.25);
\end{tikzpicture}
 \right] \, .
\]

For technical reasons we will actually be mostly interested in the following
(equivalent) relation:
\begin{equation}
	\label{eq:colskeindecat}
\sum_{s=0}^{b} (-q^{b-1})^s
\left[
\begin{tikzpicture}[scale=.4,smallnodes,rotate=90,anchorbase]
	\draw[very thick] (1,-1) to [out=150,in=270] (0,0); 
	\draw[line width=5pt,color=white] (0,-2) to [out=90,in=270] (.5,0) to [out=90,in=270] (0,2);
	\draw[very thick] (0,-2) node[right=-2pt]{$a$} to [out=90,in=270] (.5,0) 
		to [out=90,in=270] (0,2) node[left=-2pt]{$a$};
	\draw[very thick] (1,1) to (1,2) node[left=-2pt]{$b$};
	\draw[line width=5pt,color=white] (0,0) to [out=90,in=210] (1,1); 
	\draw[very thick] (0,0) to [out=90,in=210] (1,1); 
	\draw[very thick] (1,-2) node[right=-2pt]{$b$} to (1,-1); 
	\draw[very thick] (1,-1) to [out=30,in=330] node[above=-2pt]{$s$} (1,1); 
\end{tikzpicture}
\right]
= (-1)^b q^{b(a-b-1)} \prod_{i=1}^{b} (1-q^{2i}) 
\left[
\begin{tikzpicture}[scale=.4,smallnodes,anchorbase,rotate=270]
	\draw[very thick] (1,-1) to [out=150,in=270] (0,1) to (0,2) node[right=-2pt]{$b$}; 
	\draw[line width=5pt,color=white] (0,-2) to (0,-1) to [out=90,in=210] (1,1);
	\draw[very thick] (0,-2) node[left=-2pt]{$b$} to (0,-1) to [out=90,in=210] (1,1);
	\draw[very thick] (1,1) to (1,2) node[right=-2pt]{$a$};
	\draw[very thick] (1,-2) node[left=-2pt]{$a$} to (1,-1); 
	\draw[very thick] (1,-1) to [out=30,in=330] node[below=-1pt]{$a{-}b$} (1,1); 
\end{tikzpicture}
\right] \, .
\end{equation}
\medskip

The goal of this paper is to prove a categorical analog of the colored
skein relation \eqref{eq:colskeindecat}, which takes the form of a homotopy
equivalence of complexes constructed from \emph{Rickard
complexes} of \emph{singular Soergel bimodules}. We now discuss these
ingredients in turn. 
\smallskip

Singular Soergel bimodules~\cite{Williamson-thesis} in type $A$ form a monoidal
2-category $\SSBim$, which provides a categorification of the Hecke algebroid
$\mathbf{H}$ in the same sense in which $\SBim_n$ categorifies the Hecke
algebra $\mathrm{H}_n$. Moreover, $\SSBim$ is obtained as the idempotent
completion of a monoidal 2-category of so-called \emph{singular Bott--Samelson
bimodules}---composites of induction and restriction bimodules between partially
symmetric polynomial rings, modeled on planar webs as drawn above; see
Section~\ref{ss:ssbim} for details.
\smallskip

Rickard complexes can be considered as generalizations of the Rouquier complexes
for Artin generators to the colored setting. They entered higher representation
theory in the seminal work of Chuang--Rouquier~\cite{MR2373155} in the context
of $\slnn{2}$-actions on categories. Closer to our setting, Rickard
complexes of singular Soergel bimodules were proposed as the basic ingredient
for a colored version of triply-graded Khovanov--Rozansky homology by
Mackaay--Sto\v{s}i\'{c}--Vaz~\cite{MR2746676}, a proposal that was subsequently
implemented by Webster--Williamson~\cite{MR3687104}. We will describe these in
detail in Section~\ref{sec:colbraid}. Just like Rouquier complexes, we denote
the Rickard complexes of colored braids\footnote{We will also adopt 
this notation for certain complexes of singular Bott--Samelson bimodules 
that are most-easily described as braided webs.} $\beta$ by $\llbracket \beta\rrbracket$. 
\smallskip

The ``right-hand'' side of our categorified colored skein relation involves the
following complex: 
\[
\MCSmin_{a,b} := \left( 
		\begin{tikzpicture}[smallnodes,rotate=90,baseline=.1em,scale=.525]
			\draw[very thick] (0,.25) to [out=150,in=270] (-.25,1) node[left,xshift=2pt]{$a$};
			\draw[very thick] (.5,.5) to (.5,1) node[left,xshift=2pt]{$b$};
			\draw[very thick] (0,.25) to  (.5,.5);
			\draw[very thick] (0,-.25) to (0,.25);
			\draw[dotted] (.5,-.5) to [out=30,in=330] node[above=-2pt]{$0$} (.5,.5);
			\draw[very thick] (0,-.25) to  (.5,-.5);
			\draw[very thick] (.5,-1) node[right,xshift=-2pt]{$b$} to (.5,-.5);
			\draw[very thick] (-.25,-1)node[right,xshift=-2pt]{$a$} to [out=90,in=210] (0,-.25);
		\end{tikzpicture}
		\to  
		\begin{tikzpicture}[smallnodes,rotate=90,baseline=.1em,scale=.525]
			\draw[very thick] (0,.25) to [out=150,in=270] (-.25,1) node[left,xshift=2pt]{$a$};
			\draw[very thick] (.5,.5) to (.5,1) node[left,xshift=2pt]{$b$};
			\draw[very thick] (0,.25) to  (.5,.5);
			\draw[very thick] (0,-.25) to (0,.25);
			\draw[very thick] (.5,-.5) to [out=30,in=330] node[above=-2pt]{$1$} (.5,.5);
			\draw[very thick] (0,-.25) to  (.5,-.5);
			\draw[very thick] (.5,-1) node[right,xshift=-2pt]{$b$} to (.5,-.5);
			\draw[very thick] (-.25,-1)node[right,xshift=-2pt]{$a$} to [out=90,in=210] (0,-.25);
		\end{tikzpicture}
		\to 
\begin{tikzpicture}[smallnodes,rotate=90,baseline=.1em,scale=.525]
			\draw[very thick] (0,.25) to [out=150,in=270] (-.25,1) node[left,xshift=2pt]{$a$};
			\draw[very thick] (.5,.5) to (.5,1) node[left,xshift=2pt]{$b$};
			\draw[very thick] (0,.25) to  (.5,.5);
			\draw[very thick] (0,-.25) to (0,.25);
			\draw[very thick] (.5,-.5) to [out=30,in=330] node[above=-2pt]{$2$} (.5,.5);
			\draw[very thick] (0,-.25) to  (.5,-.5);
			\draw[very thick] (.5,-1) node[right,xshift=-2pt]{$b$} to (.5,-.5);
			\draw[very thick] (-.25,-1)node[right,xshift=-2pt]{$a$} to [out=90,in=210] (0,-.25);
\end{tikzpicture}
\to \cdots \to
		\begin{tikzpicture}[smallnodes,rotate=90,baseline=.1em,scale=.525]
			\draw[very thick] (0,.25) to [out=150,in=270] (-.25,1) node[left,xshift=2pt]{$a$};
			\draw[very thick] (.5,.5) to (.5,1) node[left,xshift=2pt]{$b$};
			\draw[dotted] (0,.25) to  (.5,.5);
			\draw[very thick] (0,-.25) to (0,.25);
			\draw[very thick] (.5,-.5) to [out=30,in=330] node[above=-2pt]{$b$} (.5,.5);
			\draw[dotted] (0,-.25) to  (.5,-.5);
			\draw[very thick] (.5,-1) node[right,xshift=-2pt]{$b$} to (.5,-.5);
			\draw[very thick] (-.25,-1)node[right,xshift=-2pt]{$a$} to [out=90,in=210] (0,-.25);
		\end{tikzpicture}
		\right).
\]
Here the webs represent certain singular Bott-Samelson bimodules, 
and (for now) we omit all degree shifts.
In Proposition \ref{prop:topological shifted rickard}, 
we show that
\[
\MCSmin_{a,b}\simeq
\left\llbracket
\begin{tikzpicture}[scale=.4,smallnodes,anchorbase,rotate=270]
\draw[very thick] (1,-1) to [out=150,in=270] (0,1) to (0,2) node[right=-2pt]{$b$}; 
\draw[line width=5pt,color=white] (0,-2) to (0,-1) to [out=90,in=210] (1,1);
\draw[very thick] (0,-2) node[left=-2pt]{$b$} to (0,-1) to [out=90,in=210] (1,1);
\draw[very thick] (1,1) to (1,2) node[right=-2pt]{$a$};
\draw[very thick] (1,-2) node[left=-2pt]{$a$} to (1,-1); 
\draw[very thick] (1,-1) to [out=30,in=330] node[below=-1pt]{$a{-}b$} (1,1); 
\end{tikzpicture}
\right\rrbracket \, .
\]
This holds for all integers $a,b$ provided we interpret the right-hand side as
zero when $a<b$.  Note that $\MCSmin_{a,b}$ (like any complex of singular
Soergel bimodules) is a complex of modules over an appropriate ring of partially
symmetric functions $\Sym(\X_1|\X_2|\X_1'|\X_2')$, so we may form the tensor
product
\[
K(\MCSmin_{a,b}):=\MCSmin_{a,b}\otimes_{ \Sym(\leftX_2|\rightX_2)} K,
\]
where $K$ is the Koszul complex
\[
K := \Sym(\X_2|\X_2')\otimes\largewedge[\xi_1,\ldots,\xi_b]\, , \quad
\d(\xi_k) = \sum_{i+j=k} (-1)^j h_i(\leftX_2)e_j(\rightX_2)
\]
(see Definition \ref{def:Koszul cx}).
The colored skein relation then takes the following form.  
\begin{thm}\label{thm:intro skein} 
The complex $K(\MCSmin_{a,b})$ is homotopy
equivalent to the following one-sided twisted complexes: 
\begin{equation}\label{eq:conv}
\left(
\left\llbracket
\begin{tikzpicture}[scale=.35,smallnodes,rotate=90,baseline=.4em]
	\draw[very thick] (1,-1) to [out=150,in=270] (0,0); 
	\draw[line width=5pt,color=white] (0,-2) to [out=90,in=270] (.5,0) 
		to [out=90,in=270] (0,2);
	\draw[very thick] (0,-2) node[right=-2pt]{$a$}to [out=90,in=270] (.5,0) 
		to [out=90,in=270] (0,2) node[left=-2pt]{$a$};
	\draw[line width=5pt,color=white] (0,0) to [out=90,in=210] (1,1); 
	\draw[very thick] (0,0) to [out=90,in=210] (1,1); 
	\draw[very thick] (1,-2) node[right=-2pt]{$b$} to (1,-1); 
	\draw[dotted] (1,-1) to [out=30,in=330] node[above,yshift=-2pt]{$0$} (1,1); 
	\draw[very thick] (1,1) to (1,2) node[left=-2pt]{$b$};
\end{tikzpicture}
\right\rrbracket 
\to
\left\llbracket
\begin{tikzpicture}[scale=.35,smallnodes,rotate=90,baseline=.4em]
	\draw[very thick] (1,-1) to [out=150,in=270] (0,0); 
	\draw[line width=5pt,color=white] (0,-2) to [out=90,in=270] (.5,0) 
		to [out=90,in=270] (0,2);
	\draw[very thick] (0,-2) node[right=-2pt]{$a$}to [out=90,in=270] (.5,0) 
		to [out=90,in=270] (0,2) node[left=-2pt]{$a$};
	\draw[line width=5pt,color=white] (0,0) to [out=90,in=210] (1,1); 
	\draw[very thick] (0,0) to [out=90,in=210] (1,1); 
	\draw[very thick] (1,-2) node[right=-2pt]{$b$} to (1,-1); 
	\draw[very thick] (1,-1) to [out=30,in=330] node[above,yshift=-2pt]{$1$} (1,1); 
	\draw[very thick] (1,1) to (1,2) node[left=-2pt]{$b$};
\end{tikzpicture}
\right\rrbracket 
\to \cdots \to
\left\llbracket
\begin{tikzpicture}[scale=.35,smallnodes,rotate=90,baseline=.4em]
	\draw[dotted] (1,-1) to [out=150,in=270] (0,0); 
	\draw[line width=5pt,color=white] (0,-2) to [out=90,in=270] (.5,0) 
		to [out=90,in=270] (0,2);
	\draw[very thick] (0,-2) node[right=-2pt]{$a$}to [out=90,in=270] (.5,0) 
		to [out=90,in=270] (0,2) node[left=-2pt]{$a$};
	\draw[dotted] (0,0) to [out=90,in=210] (1,1); 
	\draw[very thick] (1,-2) node[right=-2pt]{$b$} to (1,-1); 
	\draw[very thick] (1,-1) to [out=30,in=330] node[above,yshift=-2pt]{$b$} (1,1); 
	\draw[very thick] (1,1) to (1,2) node[left=-2pt]{$b$};
\end{tikzpicture}
\right\rrbracket 
\right) 
\simeq 
K\left(\left\llbracket\begin{tikzpicture}[scale=.35,smallnodes,anchorbase,rotate=270]
	\draw[very thick] (1,-1) to [out=150,in=270] (0,1) to (0,2) node[right=-2pt]{$b$}; 
	\draw[line width=5pt,color=white] (0,-2) to (0,-1) to [out=90,in=210] (1,1);
	\draw[very thick] (0,-2) node[left=-2pt]{$b$} to (0,-1) to [out=90,in=210] (1,1);
	\draw[very thick] (1,1) to (1,2) node[right=-2pt]{$a$};
	\draw[very thick] (1,-2) node[left=-2pt]{$a$} to (1,-1); 
	\draw[very thick] (1,-1) to [out=30,in=330] node[below]{$a{-}b$} (1,1); 
\end{tikzpicture}\right\rrbracket \right).
\end{equation}
\end{thm}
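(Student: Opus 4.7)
The plan is to combine Proposition \ref{prop:topological shifted rickard} with a Gaussian elimination argument. By that proposition, the complex $\MCSmin_{a,b}$ is homotopy equivalent to the Rickard complex appearing on the right-hand side of \eqref{eq:conv}. Since $K$ is an exact functor on complexes of $\Sym(\X_2|\X_2')$-modules (applied via tensor product with the Koszul resolution), this homotopy equivalence is preserved, and the right-hand side of \eqref{eq:conv} is identified with $K(\MCSmin_{a,b})$. The theorem is thus reduced to showing that the one-sided twisted complex on the left-hand side is homotopy equivalent to $K(\MCSmin_{a,b})$.

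To establish this, I would first expand each Rickard complex $\llbracket W_s\rrbracket$ in the twisted complex as an explicit finite complex of singular Bott--Samelson bimodules, using the web-theoretic descriptions of Rickard complexes for singular braided webs developed earlier in the paper. Totalizing the one-sided twisted complex structure then yields a bicomplex whose rows are indexed by $s\in\{0,1,\ldots,b\}$ and whose columns are internal to each $\llbracket W_s\rrbracket$. Inside this bicomplex I would identify $b+1$ distinguished terms matching those of $\MCSmin_{a,b}$ (one for each web with $s$ strands in the central region, for $s=0,\ldots,b$), and cancel all remaining terms by iterated Gaussian elimination. The residual differentials between the retained terms should then recombine into the differentials of $K(\MCSmin_{a,b})$: the native differentials of $\MCSmin_{a,b}$ together with the Koszul differential contributed by the tensor with $K$, which accounts for the identification of symmetric polynomial generators in the $\X_2$ and $\X_2'$ alphabets on opposite sides of the twisted complex.

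The main obstacle is the bookkeeping: tracking signs, degree shifts, and the precise pattern of cancellations, and in particular verifying that the Koszul differential of $K$ emerges correctly from the Gaussian elimination rather than as an unaccounted-for remainder. A useful consistency check is the decategorification identity \eqref{eq:colskeindecat}: the coefficients $(-q^{b-1})^s$ on the left and $(-1)^b q^{b(a-b-1)}\prod_{i=1}^b(1-q^{2i})$ on the right pin down the expected Euler characteristic on each side, so any plausible cancellation pattern must respect these numerics. Beyond this, the hardest conceptual point is exhibiting the exterior generators $\xi_1,\ldots,\xi_b$ of $K$ as residues of the Gaussian elimination homotopies, which is where the interaction between the one-sided twisted complex structure and the singular Bott--Samelson bimodules becomes most delicate.
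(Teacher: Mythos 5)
Your first step is correct and matches the paper: since $\MCSmin_{a,b}\simeq\MCS_{a,b}$ by Proposition~\ref{prop:topological shifted rickard} and $K$ is a dg functor (Lemma~\ref{lemma:K is dg fun}), the RHS of \eqref{eq:conv} is identified with $K(\MCSmin_{a,b})$, and the theorem reduces to showing that the LHS twisted complex is homotopy equivalent to $K(\MCSmin_{a,b})$.

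The second step, however, has a genuine gap, and the direction of attack is essentially backwards from what works. You propose to expand the threaded-digon Rickard complexes on the LHS into singular Bott--Samelson bimodules and cancel down to ``$b+1$ distinguished terms'' by Gaussian elimination. Two problems. First, the terms that need to be retained are not $b+1$ in number: $K(\MCSmin_{a,b})$ has $(b+1)\cdot 2^b$ chain groups before any simplification, so no cancellation pattern with only $b+1$ survivors can recover it. Second, and more fundamentally, without an organizing structure there is no reason the iterated Gaussian eliminations converge to anything recognizable --- you acknowledge this yourself when you flag ``exhibiting the exterior generators $\xi_1,\ldots,\xi_b$ as residues of the Gaussian elimination homotopies'' as the hardest point, and this is precisely where the plan stalls. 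The paper proceeds in the opposite direction: it \emph{filters} $K(\MCSmin_{a,b})$ and then identifies subquotients, rather than cancelling into it from the LHS.

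The device that makes the filtration visible, and which is absent from your proposal, is a change of basis in the exterior algebra: replacing $\xi_i$ by $\zeta_j^{(k)}:=\sum_{i=1}^j(-1)^{i-1}e_{j-i}(\leftM^{(k)})\otimes\xi_i$ (Definition~\ref{def:zeta}). In this basis the Koszul differential becomes $\zeta_j^{(k)}\mapsto e_j(\leftM^{(k)})-e_j(\rightM^{(k)})$ (Lemma~\ref{lemma:zeta d2}), which vanishes identically for $j>k$ since $|\leftM^{(k)}|=k$. Thus in column $k$ the variables $\zeta_{k+1}^{(k)},\ldots,\zeta_b^{(k)}$ are inert, and counting how many of them appear gives the $s$-filtration of Proposition~\ref{prop:KMCSdiffs}. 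Identifying the subquotients $\MCCSmin_{a,b}^s$ with the threaded-digon complexes $\MCCS_{a,b}^s$ then requires two further non-trivial results that Gaussian elimination cannot substitute for: the base case $\MCCSmin_{a,b}^0\simeq\MCCS_{a,b}^0$ (Theorem~\ref{thm:Q0 is FT}, which is the Beliakova--Habiro ribbon-element conjecture and itself requires Proposition~\ref{prop:UniqueYCrossing}, Proposition~\ref{prop:trivMCCS}, and the row identification of Proposition~\ref{prop:interpretation of Rl0}), and the recursion $\MCCSmin_{a,b}^s\cong\I^{(s)}(\MCCSmin_{a,b-s}^0)$ (Proposition~\ref{prop:I of Q0}, proved via the explicit digon-removal isomorphisms $\mu_k$). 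Only then does homological perturbation assemble the one-sided twisted complex. Your consistency check via the decategorified identity \eqref{eq:colskeindecat} is a reasonable sanity check, but it cannot detect the structural obstructions above.
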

Here, we have 
omitted all degree shifts as well as potentially longer arrows
pointing to the right.
For the precise statement, see Theorem \ref{thm:coloredskein}.

\begin{remark}
We prove Theorem \ref{thm:coloredskein} essentially by showing that
$K(\MCSmin_{a,b})$ has a filtration whose subquotients
are homotopy equivalent to the the complexes associated to ``threaded digons''
as shown on the left-hand side of \eqref{eq:conv}. 
\end{remark}

Composing with a negative crossing on the left (say) yields the following consequence.

\begin{cor}\label{cor:intro skein}
Let $K'$ denote the Koszul complex
\[
K' := \Sym(\leftX_1|\rightX_2)\otimes \largewedge[\xi_1',\ldots,\xi_b']
\, , \quad \d(\xi_k') = \sum_{i+j=k} (-1)^j h_i(\leftX_1)e_j(\rightX_2),
\] 
then we have
\[ 
\left(
\left\llbracket
\begin{tikzpicture}[scale=.35,smallnodes,rotate=90,anchorbase]
\draw[very thick] (1,-1) to [out=150,in=210] (0,1); 
\draw[line width=5pt,color=white] (0,-2) to [out=90,in=270] (1,2);
\draw[very thick] (0,-2) node[right=-2pt]{$a$}to [out=90,in=270] (1,2) node[left=-2pt]{$a$};
\begin{scope} 
\clip (1,-.5) rectangle (0,.75) ;
\end{scope}
\draw[dotted] (1,-1) to [out=30,in=330] node[above,yshift=-2pt]{$0$} (0,1); 
\draw[very thick] (1,-2) node[right=-2pt]{$b$} to (1,-1); 
\draw[very thick] (0,1) to (0,2) node[left=-2pt]{$b$};
\end{tikzpicture}
\right\rrbracket
\to
\left\llbracket
\begin{tikzpicture}[scale=.35,smallnodes,rotate=90,anchorbase]
\draw[very thick] (1,-1) to [out=150,in=210] (0,1); 
\draw[line width=5pt,color=white] (0,-2) to [out=90,in=270] (1,2);
\draw[very thick] (0,-2) node[right=-2pt]{$a$}to [out=90,in=270] (1,2) node[left=-2pt]{$a$};
\begin{scope} 
\clip (1,-.5) rectangle (0,.75) ;
\draw[line width=5pt,color=white] (1,-1) to [out=30,in=330] (0,1); 
\end{scope}
\draw[very thick] (1,-1) to [out=30,in=330] node[above,yshift=-2pt]{$1$} (0,1); 
\draw[very thick] (1,-2) node[right=-2pt]{$b$} to (1,-1); 
\draw[very thick] (0,1) to (0,2) node[left=-2pt]{$b$};
\end{tikzpicture}
\right\rrbracket
\to
\cdots
\to
\left\llbracket
\begin{tikzpicture}[scale=.35,smallnodes,rotate=90,anchorbase]
\draw[dotted] (1,-1) to [out=150,in=210] (0,1); 
\draw[line width=5pt,color=white] (0,-2) to [out=90,in=270] (1,2);
\draw[very thick] (0,-2) node[right=-2pt]{$a$}to [out=90,in=270] (1,2) node[left=-2pt]{$a$};
\begin{scope} 
\clip (1,-.5) rectangle (0,.75) ;
\draw[line width=5pt,color=white] (1,-1) to [out=30,in=330] (0,1); 
\end{scope}
\draw[very thick] (1,-1) to [out=30,in=330] node[above,yshift=-2pt]{$b$} (0,1); 
\draw[very thick] (1,-2) node[right=-2pt]{$b$} to (1,-1); 
\draw[very thick] (0,1) to (0,2) node[left=-2pt]{$b$};
\end{tikzpicture}
\right\rrbracket
\right)
\simeq 
\left\llbracket\begin{tikzpicture}[smallnodes,rotate=90,anchorbase,scale=.525]
	\draw[very thick] (0,.25) to [out=150,in=270] (-.25,1) node[left=-2pt]{$b$};
	\draw[very thick] (.5,.5) to (.5,1) node[left=-2pt]{$a$};
	\draw[very thick] (0,.25) to node[right=-2pt]{$a{-}b$} (.5,.5);
	\draw[very thick] (.5,-1) node[right=-2pt]{$b$} to [out=90,in=330] (.5,.5);
	\draw[very thick] (-.25,-1)node[right=-2pt]{$a$} to [out=90,in=270] (0,.25);
\end{tikzpicture}\right\rrbracket \otimes_{\Sym(\leftX_1|\rightX_2)} K' \, .
\]
\end{cor}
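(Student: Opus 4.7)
The plan is to derive Corollary~\ref{cor:intro skein} from Theorem~\ref{thm:intro skein} by horizontally composing both sides of the equivalence~\eqref{eq:conv} with the Rickard complex of a negative crossing placed on the upper-left of the diagrams, between the outgoing $a$- and $b$-strands. Since horizontal composition with a Rickard complex is a homotopy-invariant operation on one-sided twisted complexes in $\SSBim$, this manipulation is legitimate and preserves the homotopy equivalence delivered by Theorem~\ref{thm:intro skein}.

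On the LHS of \eqref{eq:conv}, the additional negative crossing combines with the $a$-strand wiggle via a Reidemeister~II equivalence $\llbracket \sigma \rrbracket \otimes \llbracket \sigma^{-1} \rrbracket \simeq \mathbbm{1}$, cancelling one of the two $a$-crossings in each term and producing precisely the single-crossing diagrams that appear in the corollary's LHS, together with the twisted complex structure. On the RHS, the same composition pre-composes the displayed web with a crossing; this crossing is absorbed into the neighbouring merge/split vertex by a standard braided-web identity (a categorified $\mathbf{H}$-move), yielding the web displayed in the corollary's RHS.

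The main point to verify is the interaction with the Koszul tensor factor. Composition on the left effectively relabels the top-left block of variables, so the Koszul complex $K$ formed over $\Sym(\leftX_2|\rightX_2)$ with differential built from $h_i(\leftX_2)$ and $e_j(\rightX_2)$ is naturally replaced by $K'$ formed over $\Sym(\leftX_1|\rightX_2)$ with differential built from $h_i(\leftX_1)$ and $e_j(\rightX_2)$. This compatibility, which I expect to be the main technical obstacle, should follow from the explicit bimodule structure of the Rickard crossing complex, which realises the interchange of left-boundary variables up to a Koszul-type contraction, together with the naturality of the defining Koszul differentials under that identification. Once this compatibility is in hand, the corollary follows immediately by substitution into the statement of Theorem~\ref{thm:intro skein}.
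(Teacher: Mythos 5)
Your approach is the same one the paper intends: the paper offers no proof of the corollary beyond the sentence ``Composing with a negative crossing on the left (say) yields the following consequence,'' and your proposal is a reasonable expansion of that. The two topological steps (Reidemeister~II on the left-hand side; cancelling a crossing against the one hidden in the $\MCS$ web, using Proposition~\ref{prop:forkslide} and R2, to leave the bare $\F^{(a-b)}$-web on the right) are correctly identified.

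The one place where you hand-wave is exactly the point you flag as the ``main technical obstacle,'' namely the passage from $K$ to $K'$, and it is worth being precise there. The mechanism is not a generic ``explicit bimodule structure'' argument but the specific strand-slide homotopy for Rickard complexes: Lemma~\ref{lemma:dot sliding} (and its dual form, used in the proof of Theorem~\ref{thm:Q0 is FT} via Proposition~\ref{prop:UniqueYCrossing}) shows that $h_r(\X - \X')$ is null-homotopic on a crossing complex when $\X,\X'$ are the boundary alphabets of a single strand passing through the crossing. After composing with the negative crossing, the alphabet $\leftX_2$ of the old outgoing $b$-edge becomes interior, and the strand-slide homotopy identifies it (up to homotopy) with the new outgoing $b$-alphabet $\leftX_1$. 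One then invokes the standard fact that Koszul complexes built on homotopic sequences of central elements are homotopy equivalent (a perturbation/twisting argument) to replace the Koszul differential $h_k(\leftX_2-\rightX_2)$ by $h_k(\leftX_1-\rightX_2)$; this is what produces $K'$. With that reference filled in, your argument is complete.
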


In the course of proving Theorem \ref{thm:intro skein}, we obtain explicit
descriptions of the chain complexes involved above. Of particular interest, we
compute the complex assigned to a colored full twist braid on two strands and
identify it with the image of the Beliakova--Habiro categorical ribbon element
\cite{BH}. This verifies a version\footnote{The original statement concerns the
homotopy category of categorified quantum $\slnn{2}$; 
our results show that it holds in any integrable quotient thereof.} 
of \cite[Conjecture 1.3]{BH}; see Theorem~\ref{thm:Q0 is FT}.

\begin{exa}(1-colored case) By composing the skein relation
	\eqref{eq:uncoloredskein} with a positive crossing, we obtain the following
	homotopy equivalence:
	\begin{equation}\label{eq:linksplit pure}
	\Bigg(
	\left\llbracket
	\begin{tikzpicture}[scale=.5,smallnodes,anchorbase,rotate=90]
			\draw[very thick] (1,0) to [out=90,in=270] (0,1.5);
			\draw[line width=5pt,color=white] (1,-1.5) to [out=90,in=270] (0,0) 
			   to [out=90,in=270] (1,1.5);
			\draw[very thick] (1,-1.5) to [out=90,in=270] (0,0) 
			   to [out=90,in=270] (1,1.5);
			\draw[line width=5pt,color=white] (0,-1.5) to [out=90,in=270] (1,0);
			\draw[very thick] (0,-1.5) to [out=90,in=270] (1,0);
	\end{tikzpicture} 
	\right\rrbracket
	\longrightarrow
	\tdeg
	\left\llbracket
	\begin{tikzpicture}[scale=.5,smallnodes,rotate=90,anchorbase]
		\draw[very thick] (0,-2) to (0,0);
		\draw[very thick] (1,-2) to (1,0);
	\end{tikzpicture}
	\right\rrbracket
	\Bigg)
	\simeq 
	\Bigg(
		\qdeg 
	\left\llbracket
	\begin{tikzpicture}[rotate=90,scale=.5,smallnodes,anchorbase]
		\draw[very thick] (1,-1) to [out=90,in=270] (0,1);
		\draw[line width=5pt,color=white] (0,-1) to [out=90,in=270] (1,1);
		\draw[very thick] (0,-1) to [out=90,in=270] (1,1);
	\end{tikzpicture}
	\right\rrbracket
	\longrightarrow 
	\qdeg\inv \tdeg 
	\left\llbracket
	\begin{tikzpicture}[rotate=90,scale=.5,smallnodes,anchorbase]
		\draw[very thick] (1,-1) to [out=90,in=270] (0,1);
		\draw[line width=5pt,color=white] (0,-1) to [out=90,in=270] (1,1);
		\draw[very thick] (0,-1) to [out=90,in=270] (1,1);
	\end{tikzpicture}
	\right\rrbracket
	\Bigg) 
	\end{equation}
	in which the map on the right is multiplication by
	$h_1(\leftX_2-\rightX_2)$. 
	This is the special case of \eqref{eq:conv}
	corresponding to $a=b=1$.
	More explicitly, the right-hand side of
	\eqref{eq:linksplit pure} is a complex of the form
	\begin{equation}\label{eq:uncoloredKoszul}
	\begin{tikzcd}
		\qdeg\;
	\begin{tikzpicture}[smallnodes,rotate=90,anchorbase,scale=.5]
		\draw[very thick] (0,.375) to [out=150,in=270] (-.5,1);
		\draw[very thick] (0,.375) to [out=30,in=270] (.5,1);
		\draw[very thick] (0,-.375) to (0,.375);
		\draw[very thick] (.5,-1) to  [out=90,in=330] (0,-.375);
		\draw[very thick] (-.5,-1) to [out=90,in=210] (0,-.375);
	\end{tikzpicture}
	\ar[rr,"\chi_0^+"] \ar[dd, "x_2 - x_2' "]
	&&
	\tdeg\;
	\begin{tikzpicture}[scale=.5,smallnodes,rotate=90,anchorbase]
			\draw[very thick] (0,-2) to (0,0);
			\draw[very thick] (1,-2) to (1,0);
	\end{tikzpicture}
	\ar[dd, "x_2-x_2'(=0)"]
	 \\ && \\
	 \qdeg\inv \tdeg\;
	\begin{tikzpicture}[smallnodes,rotate=90,anchorbase,scale=.5]
		\draw[very thick] (0,.375) to [out=150,in=270] (-.5,1);
		\draw[very thick] (0,.375) to [out=30,in=270] (.5,1);
		\draw[very thick] (0,-.375) to (0,.375);
		\draw[very thick] (.5,-1) to  [out=90,in=330] (0,-.375);
		\draw[very thick] (-.5,-1) to [out=90,in=210] (0,-.375);
	\end{tikzpicture}
	\ar[rr, "\chi_0^+"]
	&&
	\qdeg^{-2} \tdeg^2\;
	\begin{tikzpicture}[scale=.5,smallnodes,rotate=90,anchorbase]
			\draw[very thick] (0,-2) to (0,0);
			\draw[very thick] (1,-2) to (1,0);
	\end{tikzpicture}
	\end{tikzcd} 
	\end{equation}
	On the other hand, 
	there is a well-known homotopy equivalence
	\begin{equation}\label{eq:ft2}
	\left\llbracket
	\begin{tikzpicture}[scale=.5,smallnodes,anchorbase,rotate=90]
			\draw[very thick] (1,0) to [out=90,in=270] (0,1.5);
			\draw[line width=5pt,color=white] (1,-1.5) to [out=90,in=270] (0,0) 
			   to [out=90,in=270] (1,1.5);
			\draw[very thick] (1,-1.5) to [out=90,in=270] (0,0) 
			   to [out=90,in=270] (1,1.5);
			\draw[line width=5pt,color=white] (0,-1.5) to [out=90,in=270] (1,0);
			\draw[very thick] (0,-1.5) to [out=90,in=270] (1,0);
	\end{tikzpicture} 
	\right\rrbracket
	\simeq
	\left(\
	\qdeg 
	\begin{tikzpicture}[smallnodes,rotate=90,anchorbase,scale=.5]
		\draw[very thick] (0,.375) to [out=150,in=270] (-.5,1);
		\draw[very thick] (0,.375) to [out=30,in=270] (.5,1);
		\draw[very thick] (0,-.375) to (0,.375);
		\draw[very thick] (.5,-1) to  [out=90,in=330] (0,-.375);
		\draw[very thick] (-.5,-1) to [out=90,in=210] (0,-.375);
	\end{tikzpicture}
	\xrightarrow{x_2 - x_2'}
	\qdeg\inv \tdeg 
	\begin{tikzpicture}[smallnodes,rotate=90,anchorbase,scale=.5]
		\draw[very thick] (0,.375) to [out=150,in=270] (-.5,1);
		\draw[very thick] (0,.375) to [out=30,in=270] (.5,1);
		\draw[very thick] (0,-.375) to (0,.375);
		\draw[very thick] (.5,-1) to  [out=90,in=330] (0,-.375);
		\draw[very thick] (-.5,-1) to [out=90,in=210] (0,-.375);
	\end{tikzpicture}
	\xrightarrow{\chi_0^+}
	\qdeg^{-2} \tdeg^2
	\begin{tikzpicture}[scale=.5,smallnodes,rotate=90,anchorbase]
			\draw[very thick] (0,-2) to (0,0);
			\draw[very thick] (1,-2) to (1,0);
	\end{tikzpicture}\ 
	\right),
	\end{equation}
	thus \eqref{eq:ft2} can be extracted as a quotient of \eqref{eq:uncoloredKoszul}.
	We show that this
	remarkable fact extends to arbitrary colors.
	\end{exa}

\begin{exa}(2-colored case)
	The Rickard complex for a crossing between two 2-colored strands has the form
	\begin{equation*}
		C_{2,2}:=\left \llbracket
\begin{tikzpicture}[rotate=90,scale=.5,smallnodes,anchorbase]
	\draw[very thick] (1,-1) node[right,xshift=-2pt]{$2$} to [out=90,in=270] (0,1);
	\draw[line width=5pt,color=white] (0,-1) to [out=90,in=270] (1,1);
	\draw[very thick] (0,-1) node[right,xshift=-2pt]{$2$} to [out=90,in=270] (1,1);
\end{tikzpicture}
		\right\rrbracket
		=\MCSmin_{2,2} =
		\left(
		\begin{tikzpicture}[smallnodes,rotate=90,anchorbase,scale=.66]
			\draw[very thick] (0,.25) to [out=150,in=270] (-.25,1) node[left,xshift=2pt]{$2$};
			\draw[very thick] (.5,.5) to (.5,1) node[left,xshift=2pt]{$2$};
			\draw[very thick] (0,.25) to node[left,xshift=2pt,yshift=-1pt]{$2$} (.5,.5);
			\draw[very thick] (0,-.25) to (0,.25);
			\draw[dotted] (.5,-.5) to [out=30,in=330] (.5,.5);
			\draw[very thick] (0,-.25) to node[right,xshift=-2pt,yshift=-1pt]{$2$} (.5,-.5);
			\draw[very thick] (.5,-1) node[right,xshift=-2pt]{$2$} to (.5,-.5);
			\draw[very thick] (-.25,-1)node[right,xshift=-2pt]{$2$} to [out=90,in=210] (0,-.25);
		\end{tikzpicture}
		\to  \qdeg\inv \tdeg
		\begin{tikzpicture}[smallnodes,rotate=90,anchorbase,scale=.66]
			\draw[very thick] (0,.25) to [out=150,in=270] (-.25,1) node[left,xshift=2pt]{$2$};
			\draw[very thick] (.5,.5) to (.5,1) node[left,xshift=2pt]{$2$};
			\draw[very thick] (0,.25) to node[left,xshift=2pt,yshift=-1pt]{$1$} (.5,.5);
			\draw[very thick] (0,-.25) to (0,.25);
			\draw[very thick] (.5,-.5) to [out=30,in=330] (.5,.5);
			\draw[very thick] (0,-.25) to node[right,xshift=-2pt,yshift=-1pt]{$1$} (.5,-.5);
			\draw[very thick] (.5,-1) node[right,xshift=-2pt]{$2$} to (.5,-.5);
			\draw[very thick] (-.25,-1)node[right,xshift=-2pt]{$2$} to [out=90,in=210] (0,-.25);
		\end{tikzpicture}
		\to  \qdeg^{-2} \tdeg^2
		\begin{tikzpicture}[smallnodes,rotate=90,anchorbase,scale=.66]
			\draw[very thick] (-.25,-1) node[right,xshift=-2pt]{$2$} to (-.25,1) node[left,xshift=2pt]{$2$};
			\draw[very thick] (.5,-1) node[right,xshift=-2pt]{$2$} to (.5,1) node[left,xshift=2pt]{$2$};
		\end{tikzpicture}
		\right) \, .
	\end{equation*}
We denote the webs appearing in this complex as $W_2$, $W_1$ and $W_0$ respectively. 
After basis change in the exterior algebras, 
the twisted complex on the right-hand side of \eqref{eq:conv} 
has the following schematic form: 
\begin{equation*}
	\begin{tikzpicture}[anchorbase]
		\node[scale=1] at (5,-2.5){$\MCSmin_{2,2} \otimes \wedge$
		};
		\draw[->] (5.75,-2.25) to (5.75,-1.25);
		\draw[->] (4,-2.5) to (3,-2.5);
		\node[scale=.75] at (-3.5,3.5){
			$\left\llbracket
\begin{tikzpicture}[scale=.5,smallnodes,anchorbase,rotate=90]
		\draw[very thick] (1,0) to [out=90,in=270] (0,1.5)node[left,xshift=2pt]{$2$};
		\draw[line width=5pt,color=white] (1,-1.5) to [out=90,in=270] (0,0) 
		   to [out=90,in=270] (1,1.5);
		\draw[very thick] (1,-1.5)node[right,xshift=-2pt]{$2$}  to [out=90,in=270] (0,0) 
		   to [out=90,in=270] (1,1.5)node[left,xshift=2pt]{$2$};
		\draw[line width=5pt,color=white] (0,-1.5) to [out=90,in=270] (1,0);
		\draw[very thick] (0,-1.5)node[right,xshift=-2pt]{$2$}  to [out=90,in=270] (1,0);
\end{tikzpicture} 
\right\rrbracket$
		};
		\draw[->,gray] (-2,3.5) to (-1.5,3.5);
		\node[scale=.75, blue] at (0,3.5){
			$\left\llbracket
\begin{tikzpicture}[scale=.375,smallnodes,rotate=90,anchorbase]
\draw[very thick] (1,-1) to [out=150,in=270] (0,0); 
\draw[line width=5pt,color=white] (0,-2) to [out=90,in=270] (.5,0) to [out=90,in=270] (0,2);
\draw[very thick] (0,-2) node[right=-2pt]{$2$}to [out=90,in=270] (.5,0) 
	to [out=90,in=270] (0,2) node[left=-2pt]{$2$};
\draw[line width=5pt,color=white] (0,0) to [out=90,in=210] (1,1); 
\draw[very thick] (0,0) to [out=90,in=210] (1,1); 
\draw[very thick] (1,-2) node[right=-2pt]{$2$} to (1,-1); 
\draw[very thick] (1,-1) to [out=30,in=330] (1,1); 
\draw[very thick] (1,1) to (1,2) node[left=-2pt]{$2$};
\end{tikzpicture}
\right\rrbracket$
		};
		\draw[->,gray] (1.5,3.5) to (2,3.5);
		\node[scale=.75, green] at (3.5,3.5){
			$\left\llbracket
\begin{tikzpicture}[scale=.5,smallnodes,anchorbase,rotate=90]
	\draw[very thick] (0,-1.5) node[right,xshift=-2pt]{$2$} to (0,1.5)node[left,xshift=2pt]{$2$};
	\draw[very thick] (1,-1.5) node[right,xshift=-2pt]{$2$} to (1,1.5)node[left,xshift=2pt]{$2$};
\end{tikzpicture} 
\right\rrbracket$
		};
		\draw[dotted] (1.75,4) to (1.75,3) to [out=270,in=180] (4,1.75) to (5,1.75);
		\draw[dotted] (-1.75,4) to (-1.75,3) to [out=270,in=180] (2,-.75) to (5,-.75);
		\node[scale=1] at (0,0.5){
	\begin{tikzcd}[row sep=2em,column sep=-1.5em]
		& 
		W_{2}\otimes \zeta^{(2)}_1\zeta^{(2)}_2	
		\arrow[ddl]
		\arrow[dr ]
		\arrow[rrr,gray,] & & & 
		\BLUE{W_{1}\otimes \zeta^{(1)}_1\zeta^{(1)}_2}
		\arrow[dr,blue] 
		\arrow[rrr,gray] & & & 
		\GREEN{W_{0}\otimes \zeta^{(0)}_1\zeta^{(0)}_2}	
		\arrow[from=dr,dotted,gray,dash] & 
		\\
		& &
		W_{2}\otimes \zeta^{(2)}_2	
		\arrow[ldd]
		\arrow[dr,] 
		\arrow[rrr,gray] & & & 
		\BLUE{W_{1}\otimes \zeta^{(1)}_2}	
		\arrow[from=ldd,dotted,gray,dash] 
		\arrow[dr,blue] 
		\arrow[rrr,blue]  & & & 
		\BLUE{W_{0}\otimes \zeta^{(0)}_2}	
		\arrow[from=ldd,dotted,gray,dash]
		\\
		W_{2}\otimes \zeta^{(2)}_1	
		\arrow[dr]
		\arrow[rrr,crossing over] 
		& & & 
		W_{1}\otimes \zeta^{(1)}_1	
		\arrow[dr]
		\arrow[rrr,crossing over,gray] 
		\arrow[uur,dotted,gray,dash] 
		& & & 
		\BLUE{W_{0}\otimes \zeta^{(0)}_1}	
		\arrow[from=dr,dotted,gray,dash]
		\arrow[uur,dotted,gray,dash]
		 & & 
		\\
		&
		 W_{2}\otimes 1	 \arrow[rrr]& & & 
		 W_{1}\otimes 1	 \arrow[rrr]& & & 
		 W_{0}\otimes 1	 &
	\end{tikzcd}
		};
	\end{tikzpicture}
\end{equation*}
	The subquotients with respect to the filtration indicated by the dotted lines
	are homotopy equivalent to the complexes on the left-hand side of
	\eqref{eq:conv}. Additional details appear in Example~\ref{exa:KMCS}.
	\end{exa}

\begin{remark}
In this paper we focus on the objects associated to braids, 
and not closed link diagrams.
Paralleling the uncolored case, 
one obtains colored Khovanov-Rozansky homology by 
taking Hochschild (co)homology of the complex $\llbracket \beta \rrbracket$ 
assigned to a colored braid $\beta$, and then taking homology.
As such, our results have implications for (colored) Khovanov-Rozansky homology, 
but we do not explore them here.
However, in the companion paper \cite{HRW2}, 
we use curved deformations of Theorem~\ref{thm:coloredskein} 
to explore \emph{colored link splitting phenomena}.
Indeed, the results in this paper grew from the considerations in \cite{HRW2}.
We believe they are of general interest/utility, 
so we have independently packaged them here.
\end{remark}

\begin{remark}
	An expression for complexes associated to colored full twist braids on two
	strands, similar to the one implicit in Theorem~\ref{thm:intro skein}, was
	obtained in \cite[Section 4]{Wed1} and described in terms of certain winding
	diagrams inspired by Heegaard--Floer theory. It would be interesting to find
	an interpretation of the entire colored skein relation from Theorem
	\ref{thm:intro skein} in terms of suitable Fukaya categories (depending on
	the colors) associated with the 4-punctured sphere. 
\end{remark}

\begin{conv}
Throughout, we work over the field $\K$ of rational numbers 
for simplicity (e.g. in treating the background on symmetric functions);
however, our results hold over an arbitrary field. 
We further expect our results to hold over the integers, 
but certain statements 
(e.g. Lemma \ref{lem:Cautis+} and Proposition \ref{prop:UniqueYCrossing}) 
will require additional arguments in this setting.
\end{conv}

\subsection*{Acknowledgements}
This project was conceived during the conference 
``Categorification and Higher Representation Theory'' at the Institute Mittag-Leffler, 
and began in earnest during the workshop ``Categorified Hecke algebras, link
homology, and Hilbert schemes'' at the American Institute for Mathematics. We
thank the organizers and hosts for a productive working atmosphere. We would
also thank Eugene Gorsky and Lev Rozansky for many useful discussions.

\subsection*{Funding}

M.H.  was supported by NSF grant DMS-2034516.  D.R. and P.W. were supported in part by the National Science Foundation under
Grant No. NSF PHY-1748958 during a visit to the program ``Quantum Knot
Invariants and Supersymmetric Gauge Theories'' at the Kavli Institute for
Theoretical Physics. 
D.R. was partially supported by Simons Collaboration Grant 523992: 
``Research on knot invariants, representation theory, and categorification.''
P.W. was partially supported by the Australian Research
Council grants `Braid groups and higher representation theory' DP140103821 and
`Low dimensional categories' DP160103479 while at the Australian National
University during early stages of this project. P.W. was also supported by the
National Science Foundation under Grant No. DMS-1440140, while in residence at
the Mathematical Sciences Research Institute in Berkeley, California, during the
Spring 2020 semester.

\section{Webs, bimodules, and categorified quantum \texorpdfstring{$\glm$}{gl(m)}}
\label{s:webs etc}

In this section, we review background on singular Soergel bimodules and Rickard
complexes.

\subsection{Symmetric functions}
\label{ss:sym fns}

We begin with some preliminaries on symmetric functions, which play a
substantial role throughout.

\begin{defi} If $\X=\{x_1,\ldots,x_N\}$ is a finite alphabet with $N$ letters,
we let $\Sym(\X) = \K[\X]^{\mathfrak{S}_N}$ denote the ring of symmetric polynomials. 
The \emph{elementary} symmetric polynomials $e_j(\X)$, 
\emph{complete} symmetric polynomials $h_j(\X)$, 
and \emph{power sum} symmetric polynomials $p_j(\X)$
are each defined via their generating functions as follows:
\[
\begin{aligned}
E(\X,t) &:= \prod_{x\in \X} (1+x t) =: \sum_{j \geq 0} e_j(\X) t^j \\
H(\X,t) &:= \prod_{x\in \X} (1-x t)\inv =: \sum_{j\geq 0} h_j(\X) t^j \\
P(\X,t) & := \sum_{x \in \X} \frac{x t}{1-x t} =: \sum_{j\geq 1} p_j(\X) t^j \, .
\end{aligned}
\]
By convention, $e_0(\X) = h_0(\X) = 1$ and $p_0(\X)$ is undefined.
For pairwise disjoint alphabets $\X_1, \dots, \X_r$, we write
\[\Sym(\X_1|\cdots|\X_r) \cong \Sym(\X_1)\otimes \cdots \otimes \Sym(\X_r)\] for
the ring of polynomials in $\X_1\cup\cdots\cup\X_r$ that are separately
symmetric in the alphabets $\X_i$. 
\end{defi} 

The elementary and complete symmetric polynomials are related by the identity
\begin{equation}\label{eq:HE}
H(\X,t)E(\X,-t) = 1
\, , \quad \text{i.e.~}\quad
\sum_{i+j=k} (-1)^j h_i(\X) e_j(\X) = 0 \quad \forall k \geq 1\, ,
\end{equation}
and each are related to the power sum symmetric polynomials by the 
Newton identity:
\begin{equation}\label{eq:Newton}
\frac{t\frac{d}{dt}H(\X,t)}{H(\X,t)} = P(\X,t)
\, , \quad \text{i.e.~}\quad
H(\X,t) = \mathrm{exp}  \int P(\X,t) \frac{dt}{t} \, .
\end{equation}
We will establish identities involving symmetric polynomials
via the manipulation of generating functions.
For example, for disjoint alphabets $\X$ and $\X'$, the identity
\[
\sum_{i+j=k}(-1)^j h_i(\X\cup \X')e_j(\X) = h_k(\X')
\]
follows from the generating function identity
\[
H(\X \cup \X',t) E(\X,-t) = \frac{H(\X,t) H(\X',t)}{H(\X,t)} = H(\X',t) \, .
\]
In the following, when the parameter $t$ is understood, 
we shall omit it from the notation.
\smallskip

Let us now consider an alphabet $\X^N=\{x_1,\ldots,x_N\}$ on $N$ letters.
There is a map of graded algebras $\Sym(\X^{N+1})\rightarrow \Sym(\X^N)$
sending $x_{N+1}\mapsto 0$. 
By definition, the \emph{ring of symmetric functions} 
in infinitely many variables $\X^\infty = \{x_1,x_2,\ldots\}$ is 
the inverse limit
\[
\Sym(\X^\infty) := \lim_{\longleftarrow} \Sym(\X^N) \, .
\]
The symmetric functions $e_i(\X^N),h_i(\X^N),p_i(\X^N) \in \Sym(\X^N)$ 
are stable with respect
to the projections $\Sym(\X^N)\rightarrow \Sym(\X^{N-1})$, 
hence determine well-defined elements of $\Sym(\X^\infty)$. 
When we do not wish to commit ourselves to a particular alphabet, 
we will utilize the following notation.

\begin{definition}
Let $\Lambda$ denote the ring $\Sym(\X^\infty)$ of symmetric functions. 
The elementary, complete, and power sum symmetric functions 
$e_k(\X^\infty), h_k(\X^\infty)$, and $p_k(\X^\infty)$
are denoted as $e_k, h_k, p_k \in \Lambda$, respectively.
As an algebra, we have
$\Lambda \cong \K[e_1,e_2,\ldots ] \cong \K[h_1,h_2,\ldots] \cong \K[p_1,p_2\ldots]$. 
\end{definition}

Our considerations necessitate working with unions of disjoint alphabets, 
as well as differences of alphabets. 
These operations can be placed on equal footing by considering 
\emph{formal linear combinations of alphabets}.

\begin{definition}\label{def:LinCombAlph}
Let $\X_1,\ldots,\X_r$ be alphabets and let $a_1,\ldots,a_r\in \K$ be scalars.
For $f \in \Lambda$, define
\[
f(a_1\X_1+\cdots+a_r\X_r)\in \Sym(\X_1)\otimes\cdots\otimes \Sym(\X_r)
\]
as follows.
If $f=p_k$ is a power sum symmetric function, then set
\[
p_k(a_1\X_1+\cdots+a_r\X_r)=a_1p_k(\X_1)+\cdots+a_rp_k(\X_r) \, .
\]
This extends to all of $\Lambda$ by linearity:
\[
(f+g)(a_1\X_1+\cdots+a_r\X_r) = f(a_1\X_1+\cdots+a_r\X_r) + g(a_1\X_1+\cdots+a_r\X_r)
\]
and multiplicativity:
\[
(fg)(a_1\X_1+\cdots+a_r\X_r) = f(a_1\X_1+\cdots+a_r\X_r)g(a_1\X_1+\cdots+a_r\X_r) \, .
\]
\end{definition}

If $\X_1$ and $\X_2$ are disjoint alphabets, then
$p_k(\X_1+\X_2) = p_k(\X_1)+p_k(\X_2)= p_k(\X_1\cup \X_2)$,
thus Definition \ref{def:LinCombAlph} implies that 
\[
f(\X_1+\X_2) = f(\X_1\cup \X_2)
\]
for every symmetric functions $f$.
Similarly, formal subtraction of alphabets behaves as expected:
if $\X_1,\X_2$ are alphabets and $\X_0$ is disjoint from both, then
\[
f\Big((\X_1\cup \X_0) - (\X_2\cup \X_0)\Big) = f(\X_1 - \X_2)
\]
Again, this identity need only be checked in the special case that $f=p_k$
and there it is immediate.

Next, we evaluate elementary and complete symmetric functions 
on formal linear combinations of alphabets. 
For a power series $F(t)\in A[[t]]$ with coefficients in 
a $\K$-algebra $A$ and $a\in \K$, we write
\[
F(t)^a := \exp( a\ln (F(t))),
\]
where $\exp$ and $\ln$ are the obvious operators acting on power series.

\begin{lem}
	\label{lem:PHEonsum}
On the level of generating functions, we have
\begin{align*}
P(a_1\X_1+a_2\X_2,t) &= a_1P(\X_1,t)+ a_2 P(\X_2,t),\\
H(a_1\X_1+a_2\X_2,t) &= H(\X_1,t)^{a_1} H(\X_2,t)^{a_2},\\
E(a_1\X_1+a_2\X_2,t) &= E(\X_1,t)^{a_1} E(\X_2,t)^{a_2}
\end{align*}
for all $a_1,a_2\in \K$.
\end{lem}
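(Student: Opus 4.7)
The plan is to verify each identity by expressing both sides in terms of power sums, where linearity in the alphabet argument holds by definition.

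First, the identity for $P$ is immediate. By Definition \ref{def:LinCombAlph}, $p_k(a_1\X_1+a_2\X_2) = a_1 p_k(\X_1)+ a_2 p_k(\X_2)$ for every $k \geq 1$, and summing against $t^k$ gives $P(a_1\X_1+a_2\X_2,t) = a_1 P(\X_1,t) + a_2 P(\X_2,t)$.

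Next, for the $H$ identity, the plan is to apply the Newton identity \eqref{eq:Newton}, which states that $H(\X,t) = \exp\int P(\X,t)\,\tfrac{dt}{t}$. Applying this to the alphabet $a_1\X_1 + a_2\X_2$ and substituting the $P$-identity just established, I compute
\begin{align*}
H(a_1\X_1+a_2\X_2,t)
&= \exp\int \bigl(a_1 P(\X_1,t) + a_2 P(\X_2,t)\bigr)\,\tfrac{dt}{t} \\
&= \exp\bigl(a_1 \ln H(\X_1,t) + a_2 \ln H(\X_2,t)\bigr) \\
&= H(\X_1,t)^{a_1}\, H(\X_2,t)^{a_2},
\end{align*}
where the last step uses the definition $F^a := \exp(a \ln F)$ for power series given immediately before the lemma.

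Finally, for the $E$ identity, I would use the relation $E(\X,-t) = H(\X,t)^{-1}$ from \eqref{eq:HE} to deduce $E(\X,t) = H(\X,-t)^{-1}$, and then apply the $H$-identity with $-t$ in place of $t$, taking reciprocals of both sides. Alternatively, one can prove the $E$-identity directly from the logarithmic expansion $\ln E(\X,t) = \sum_{j \geq 1} (-1)^{j-1} p_j(\X) t^j/j$, which is linear in the power sums and hence, by the $P$-identity, linear in the alphabet in the required sense.

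I do not anticipate any serious obstacle; the lemma is essentially the observation that, on the level of generating functions, $\ln H$, $\ln E$, and $P$ are all $\K$-linear functionals of the alphabet, which is enforced by Definition \ref{def:LinCombAlph} on power sums and propagates to $H$ and $E$ via Newton's identity. The only mild point requiring care is treating the formal power-series identities $F^{a_1+a_2} = F^{a_1}F^{a_2}$ and $(FG)^a = F^a G^a$, but these are immediate from $F^a = \exp(a \ln F)$ since $\exp$ and $\ln$ are inverse bijections between the subgroups $1 + tA[\![t]\!]$ and $tA[\![t]\!]$.
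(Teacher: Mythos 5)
Your proof is correct and follows essentially the same route as the paper: the $P$-identity is read off from the definition, and the $H$-identity is obtained by pushing $P$ through Newton's identity $H(\X,t) = \exp\int P(\X,t)\,\tfrac{dt}{t}$; the paper likewise treats $E$ as analogous and omits details, whereas you spell out two equivalent ways to handle it.
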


\begin{proof}
The statement for $P(\X,t)$ is immediate from Definition \ref{def:LinCombAlph}.
The remaining statements follow via equation \eqref{eq:Newton}.
For example,
\begin{align*}
H(a_1\X_1+a_2\X_2,t)
&= \mathrm{exp}\int P(a_1\X_1+a_2\X_2,t) \frac{dt}{t}\\
&= \mathrm{exp}\int (a_1P(\X_1,t)+a_2P(\X_2,t))\frac{dt}{t}\\
&= \mathrm{exp}\left(a_1\int P(\X_1,t)\frac{dt}{t}\right)\mathrm{exp}\left( a_2\int P(\X_2,t)\frac{dt}{t}\right)\\
&= \mathrm{exp}\Big( a_1 \ln(H(\X_1,t))\Big)\mathrm{exp}\Big( a_2 \ln(H(\X_2,t))\Big)\\
&= H(\X_1,t)^{a_1} H(\X_2,t)^{a_2} \, . 
\qedhere
\end{align*}
\end{proof}


It follows that this notational convention for formal 
addition and subtraction of alphabets is consistent with that in \cite{Las}.
Useful special cases of Lemma~\ref{lem:PHEonsum} include
\[
H(-\X,t) = H(\X,t)\inv = E(\X,-t) \, ,
\]
and
\[
\begin{aligned}
H(\X_1+\X_2) = H(\X_1)H(\X_2) 
\, &, \quad
H(\X_1-\X_2) = \frac{H(\X_1)}{H(\X_2)} \\
E(\X_1+\X_2) = E(\X_1)E(\X_2) 
\, &, \quad
E(\X_1-\X_2) = \frac{E(\X_1)}{E(\X_2)}
\end{aligned}
\]
(in the latter we we have omitted the parameter $t$).
In particular, this gives the following generalization of \eqref{eq:HE}:
\begin{equation}\label{eq:HE2}
h_r(\X_1 - \X_2) = \sum_{j=0}^r (-1)^{j} h_{r-j}(\X_1) e_{j}(\X_2)
\end{equation}

We will need an alternative formulation of this identity, 
in which the lower index of summation starts at $j=1$.


\begin{lem}\label{lemma:somerelations1}
	Let $\X,\X'$ be alphabets, then we have the following identities for all $r\geq 1$:
	\begin{align*}
	e_r(\X) - e_r(\X') &= \sum_{j=1}^r(-1)^{j-1} e_{r-j}(\X) h_j(\X-\X')\, , \\
	h_r(\X-\X') &= \sum_{j=1}^r(-1)^{j-1} h_{r-j}(\X) \Big(e_j(\X)-e_j(\X')\Big)\, .
	\end{align*}
	\end{lem}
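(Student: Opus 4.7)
The plan is to verify both identities at the level of generating functions, using the basic relations $H(\X,t)E(\X,-t)=1$ from \eqref{eq:HE} and $H(\X-\X',t)=H(\X,t)/H(\X',t)$ from Lemma~\ref{lem:PHEonsum}. These in particular yield $H(\X',-t)=E(\X',t)\inv$, hence $H(\X-\X',-t) = H(\X,-t)/H(\X',-t) = E(\X',t)/E(\X,t)$.

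For the first identity, I would compute
\[
\sum_{j\geq 1}(-1)^{j-1} h_j(\X-\X')\, t^j \;=\; 1-H(\X-\X',-t),
\]
and then multiply by $E(\X,t)$. Using the relation above, $E(\X,t)\cdot H(\X-\X',-t)=E(\X',t)$, so
\[
E(\X,t)\bigl(1-H(\X-\X',-t)\bigr) = E(\X,t) - E(\X',t).
\]
Extracting the coefficient of $t^r$ (for $r\geq 1$) on both sides then yields the first identity.

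The second identity follows the same pattern. One first rewrites
\[
\sum_{j\geq 1}(-1)^{j-1}\bigl(e_j(\X)-e_j(\X')\bigr)t^j = E(\X',-t) - E(\X,-t),
\]
and multiplies by $H(\X,t)$. Since $H(\X,t)E(\X,-t)=1$ and $H(\X,t)E(\X',-t)=H(\X,t)/H(\X',t)=H(\X-\X',t)$, this product simplifies to $H(\X-\X',t)-1$, whose coefficient of $t^r$ for $r\geq 1$ is $h_r(\X-\X')$, giving the second identity.

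There is no substantive obstacle here; the argument is a straightforward manipulation of the generating function identities recorded in Lemma~\ref{lem:PHEonsum} and equation~\eqref{eq:HE}. The only minor point to watch is that the summations in the statement begin at $j=1$ rather than $j=0$, which is precisely what allows both sides to be packaged as $1-H(\X-\X',-t)$ and $E(\X',-t)-E(\X,-t)$ respectively, after which multiplication by $E(\X,t)$ or $H(\X,t)$ produces the desired telescoping.
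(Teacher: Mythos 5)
Your proof is correct and takes essentially the same approach as the paper: the paper simply records the two generating-function identities $E(\X,t)-E(\X',t)=-E(\X,t)\bigl(H(\X,-t)/H(\X',-t)-1\bigr)$ and $H(\X,t)/H(\X',t)-1=-H(\X,t)\bigl(E(\X,-t)-E(\X',-t)\bigr)$ and leaves it there, while you spell out the same manipulation (rewriting $H(\X,-t)/H(\X',-t)$ as $H(\X-\X',-t)$ via Lemma~\ref{lem:PHEonsum} and extracting coefficients) in more detail.
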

\begin{proof}
	This follows immediately from the generating function identities
	\[
	E(\X,t) - E(\X',t) = - E(\X,t) \left(\frac{H(\X,-t)}{H(\X',-t)} - 1\right)
	\]
	and
	\[
	\left(\frac{H(\X,t)}{H(\X',t)} - 1\right) = -H(\X,t)\Big(E(\X,-t) - E(\X',-t)\Big). \qedhere
	\]
	\end{proof}

\begin{rem}
The ring of symmetric functions is a Hopf algebra.  
The antipode 
corresponds to the substitution of alphabets 
$\X\mapsto - \X$, which is to say that
\[
(Sf)(\X) = f(-\X)\in \Sym(\X) \, .
\]
The comultiplication corresponds to the substitution 
$\X\mapsto \X_1+\X_2$,
i.e.
\[
\sum f^{(1)}(\X_1) f^{(2)}(\X_2) = f(\X_1+\X_2) \in \Sym(\X_1 | \X_2) \cong \Sym(\X_1)\otimes \Sym(\X_2)
\]
where we have used the Sweedler notation 
$\Delta(f) = \sum f^{(1)}\otimes f^{(2)}\in \Lambda\otimes \Lambda$.
\end{rem}

\subsection{Singular Soergel bimodules and webs}
\label{ss:ssbim}

Recall from the introduction that a categorification of the Hecke algebroid 
(and the natural setting for colored, triply-graded link homology) is the monoidal
2-category of type $A$ singular Soergel bimodules. Fix $N>0$, and let 
$R :=\K[x_1,\ldots,x_N]$ be the polynomial ring in variables $x_i$, 
graded by declaring $\deg(x_i)=2$. Given a parabolic subgroup 
$J_{\aa} = \mathfrak{S}_{a_1}\times \cdots \times \mathfrak{S}_{a_m}$ 
of the symmetric group $\mathfrak{S}_N$, 
we let $R^{\aa} \subseteq R$ denote the ring of
polynomials invariant under the action of $J_{\aa}$. 
Note that $R^{\bb}\subset R^{\aa}$ if and only if 
$J_{\bb}\supset J_{\aa}$.

Consider the 2-category $\mathrm{Bim}_N$ given as follows:
\begin{itemize}
	\item Objects are tuples $\aa = (a_1,\ldots,a_m)$ 
		with $a_i \geq 1$ and $\sum_{i=1}^m a_i = N$.
	\item $1$-morphisms $\aa \to \bb$ are graded
		$(R^{\bb},R^{\aa})$-bimodules.
	\item $2$-morphisms are homomorphisms of graded bimodules.
\end{itemize}
Horizontal composition is given by tensor product over the rings
$R^{\aa}$, and will be denoted by $\hComp$. 
Vertical composition is the usual composition of bimodule homomorphisms. 
We will write $\oone_{\aa} :=R^{\aa}$ for the identity bimodule, 
saving the notation $R^{\aa}$ for the rings themselves.

A \emph{singular Bott-Samelson bimodule} is, 
by definition, 
any $(R^{\aa_0},R^{\aa_r})$-bimodule of the form
\[
B = R^{\aa_0}\otimes_{R^{\bb_1}} R^{\aa_1} \otimes_{R^{\bb_2}}
\cdots  \otimes_{R^{\bb_r}} R^{\aa_r}
\]
for some sequence of rings and subrings $R^{\aa_0}\supset R^{\bb_1}\subset
\cdots \supset R^{\bb_r} \subset R^{\aa_r}$, 
or a grading shift thereof.
In particular, 
whenever $R^{\bb}\subset R^{\aa}$ (equivalently $J_{\bb}\supset J_{\aa}$),
we have the \emph{merge} and \emph{split} bimodules 
(terminology explained below) given by
\begin{equation}\label{eq:mergeandsplit}
{}_{\bb}M_{\aa} := 
\qdeg^{\ell(\aa) - \ell(\bb)}
{_{R^{\bb}}\hskip-1pt|R^{\aa}} \cong 
\qdeg^{\ell(\aa) - \ell(\bb)}
R^{\bb}  \otimes_{R^{\bb}} R^{\aa}
\, , \quad
{}_{\aa}S_{\bb}  := R^{\aa}|_{R^{\bb}} \cong R^{\aa}  \otimes_{R^{\bb}} R^{\bb} \, .
\end{equation}
Here, $\qdeg^k$ denotes a shift up in degree by $k$, 
and $\ell(\aa)$ denotes the length 
of the longest element in $J_{\aa}$.

\begin{defi}
The 2-category $\SSBim_N$ of singular Soergel bimodules 
is the smallest full $2$-subcategory of $\mathrm{Bim}_N$ 
containing the singular Bott-Samelson bimodules that is closed under 
taking shifts, direct sums, and direct summands. 
We denote the $\Hom$-category from $\aa \to \bb$ by ${}_{\bb}\SSBim_{\aa}$.
\end{defi}

There is an external tensor product
$\boxtimes\colon  \SSBim_{N_1} \times \SSBim_{N_2} \to \SSBim_{N_1+N_2}$ 
given on objects by concatenation of tuples:
\[
(a_1,\ldots,a_{m_1})\boxtimes (b_1,\ldots,b_{m_2}) \ := \ (a_1,\ldots,a_{m_1}, b_1,\ldots,b_{m_2})
\]
and on $1$- and $2$-morphisms by tensor product over $\K$. 
This implies that the collection $\{\SSBim_N \}_{N \geq 0}$ 
assemble to form a monoidal 2-category, that we denote $\SSBim$.

There are a number of combinatorial/diagrammatic 
models for the 2-category generated by the singular Bott-Samelson modules, 
\eg (the singular analogue of) Elias-Williamson's graphical calculus for Soergel bimodules
\cite{EW,ESW}, or the $k \to \infty$
(inverse) limit of the $\slnn{k}$ foam 2-category \cite{QR};
see e.g. \cite[Section 5.2]{QRS} and \cite[Proposition 3.4]{Wed3}. 
(This $k$ is independent/unrelated to $N$.) 
We will use aspects of the latter, 
as the graphical description of the $1$-morphisms therein is 
directly related to braid and link diagrams.

To wit, in this description, 
singular Bott-Samelson bimodules are denoted using MOY webs, 
certain labeled, trivalent graphs,  
\eg for $\aa = (a,b)$ and $\aa' = (a+b)$, we have 
\begin{equation}\label{eq:GenWeb}
	_{\aa'}M_{\aa} = 
	\begin{tikzpicture}[scale =.75, smallnodes, rotate=90,anchorbase]
		\draw[very thick] (0,0) node[right,xshift=-2pt]{$a$} to [out=90,in=210] (.5,.75);
		\draw[very thick] (1,0) node[right,xshift=-2pt]{$b$} to [out=90,in=330] (.5,.75);
		\draw[very thick] (.5,.75) to (.5,1.5) node[left,xshift=2pt]{$a{+}b$};
	\end{tikzpicture}
	\;\; \text{ and } \;\;
	_{\aa}S_{\aa'} =
	\begin{tikzpicture}[scale =.75, smallnodes,rotate=270,anchorbase]
		\draw[very thick] (0,0) node[left,xshift=2pt]{$b$} to [out=90,in=210] (.5,.75);
		\draw[very thick] (1,0) node[left,xshift=2pt]{$a$} to [out=90,in=330] (.5,.75);
		\draw[very thick] (.5,.75) to (.5,1.5) node[right,xshift=-2pt]{$a{+}b$};
	\end{tikzpicture} \, .
	\end{equation}
All other singular Bott-Samelson bimodules can be obtained from these using
direct sum and grading shift, together with the horizontal composition $\hComp$
and tensor product $\boxtimes$.  Graphically, $\hComp$ corresponds to to glueing
of diagrams along a common boundary and $\boxtimes$ corresponds to disjoint
union of diagrams, as depicted in the following.
\begin{exa}
For
$_{\aa'}M_{\aa}$ and $_{\aa}S_{\aa'}$ as in
\eqref{eq:GenWeb}, we have:
\[
{}_{\aa'}M_{\aa} \hComp {}_{\aa}S_{\aa'} = 
\begin{tikzpicture}[scale =.75, smallnodes, rotate=90,anchorbase]
	\draw[very thick] (.5,-.75) to [out=150,in=270] (0,0) node[above,yshift=-2pt]
		{$a$} to [out=90,in=210] (.5,.75);
	\draw[very thick] (.5,-.75) to [out=30,in=270] (1,0) node[below,yshift=1pt]{$b$} 
		to [out=90,in=330] (.5,.75);
	\draw[very thick] (.5,.75) to (.5,1.5) node[left,xshift=2pt]{$a{+}b$};
	\draw[very thick] (.5,-.75) to (.5,-1.5) node[right,xshift=-2pt]{$a{+}b$};
\end{tikzpicture}
\, , \quad
{}_{\aa'}M_{\aa} \boxtimes {}_{\aa}S_{\aa'} = 
\begin{tikzpicture}[scale =.75, smallnodes,rotate=270,anchorbase]
	\draw[very thick] (2.5,1.5) node[right,xshift=-2pt]{$a$} to [out=270,in=30] (2,.5);
	\draw[very thick] (1.5,1.5) node[right,xshift=-2pt]{$b$} to [out=270,in=150] (2,.5);
	\draw[very thick] (2,.5) to (2,0) node[left,xshift=2pt]{$a{+}b$};
	\draw[very thick] (0,0) node[left,xshift=2pt]{$b$} to [out=90,in=210] (.5,1);
	\draw[very thick] (1,0) node[left,xshift=2pt]{$a$} to [out=90,in=330] (.5,1);
	\draw[very thick] (.5,1) to (.5,1.5) node[right,xshift=-2pt]{$a{+}b$};
\end{tikzpicture} \, .
\]
\end{exa}
For the duration, we will refer to the graphs built from the diagrams in 
\eqref{eq:GenWeb} via $\hComp$ and $\boxtimes$ as \emph{webs}, 
which we always understand\footnote{Strictly speaking,
web edges should be equipped with an orientation. 
In this paper, we only consider webs with edges that are oriented
towards the left, so we omit orientation arrows from all figures.}
as mapping from the labels at their right 
endpoints to those at their left.

Let $\cal{W}$ be a web and let $B(\cal{W})$ be the associated 
singular Bott-Samelson bimodule. 
We now given an alternate description of $B(\cal{W})$, following \cite{Ras2}.
For each edge $e$ of $\cal{W}$, choose an alphabet $\X_e$ 
of cardinality equal to the label on the edge
and define the
\emph{edge ring} associated to $\cal{W}$:
\[
R(\cal{W}) := \bigotimes_{e\in \mathrm{Edges}(\cal{W})} \Sym(\X_e) \, .
\]
For each symmetric function $f$, expressions such as 
$f(\X_e)$ and $f(\X_{e_1}+\X_{e_2}-\X_{e_3})$ 
represent well-defined elements of $R(\cal{W})$.
An edge $e$ of $\cal{W}$ is called an \emph{exterior edge} 
if $e$ meets the boundary $\partial \cal{W}$.  
More specifically, if $e$ meets the left boundary we call it \emph{outgoing}, 
and if it meets the right we call it \emph{incoming}. 
We define the \emph{outgoing (respectively incoming) edge rings} by
\[
R^{\text{out}}(\cal{W}) := \bigotimes_{e \text{ is outgoing}} \Sym(\X_e)
\, , \quad
R^{\text{in}}(\cal{W}) := \bigotimes_{e \text{ is incoming}} \Sym(\X_e) \, .
\]
The following is immediate.
\begin{lem}\label{lem:edge}
Up to the shifts coming from \eqref{eq:mergeandsplit}, 
there is an isomorphism
\[
B(\cal{W}) \cong R(\cal{W})/I(\cal{W})
\]
of $\big(R^{\text{out}}(\cal{W}),R^{\text{in}}(\cal{W}) \big)$-bimodules,
where $I(\cal{W})\subset R(\cal{W})$ is the ideal generated by 
all elements of the form $f(\X_{e_1}+\X_{e_2}-\X_{e_3})$, 
where $f \in \Lambda$ and
$e_1, e_2, e_3$ are edges of $\cal{W}$ 
that meet at a trivalent vertex as in:
\[
\begin{tikzpicture}[scale =.75,rotate=90,smallnodes,anchorbase]
	\draw[very thick] (0,0) node[right,xshift=-2pt]{$a$} to [out=90,in=210] 
		 node[below]{$\X_{e_1}$} (.5,.75);
	\draw[very thick] (1,0) node[right,xshift=-2pt]{$b$} to [out=90,in=330]
		 node[above,yshift=1pt]{$\X_{e_2}$} (.5,.75);
	\draw[very thick] (.5,.75) to node[above]{$\X_{e_3}$} 
		(.5,1.5) node[left,xshift=2pt]{$a{+}b$};
\end{tikzpicture} 
\quad 
\text{or}
\quad
\begin{tikzpicture}[scale =.75,rotate=270,smallnodes,anchorbase]
	\draw[very thick] (0,0) node[left,xshift=2pt]{$b$} to [out=90,in=210] 
		 node[above]{$\X_{e_2}$} (.5,.75);
	\draw[very thick] (1,0) node[left,xshift=2pt]{$a$} to [out=90,in=330]
		 node[below,xshift=2pt,yshift=-1pt]{$\X_{e_1}$} (.5,.75);
	\draw[very thick] (.5,.75) to node[below,xshift=1pt]{$\X_{e_3}$} 
		(.5,1.5) node[right,xshift=-2pt]{$a{+}b$};
\end{tikzpicture}
\] \qed
\end{lem}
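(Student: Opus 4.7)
The plan is to induct on the structure of $\mathcal{W}$, exploiting that by definition every web is built from the elementary merges and splits of \eqref{eq:GenWeb} (together with identity strands) via $\hComp$ and $\boxtimes$. For the base cases: an identity strand labeled $c$ contributes a single edge with alphabet $\X_e$ of size $c$, no trivalent vertices, and hence $R(\mathcal{W})/I(\mathcal{W}) = \Sym(\X_e)$, matching the identity bimodule. For a single merge $\mathcal{W} = {}_{\aa'}M_{\aa}$, the three edges carry alphabets $\X_{e_1},\X_{e_2},\X_{e_3}$ of sizes $a,b,a+b$; the generators $p_k(\X_{e_1}+\X_{e_2}-\X_{e_3}) = p_k(\X_{e_1})+p_k(\X_{e_2})-p_k(\X_{e_3})$ of $I(\mathcal{W})$ for $k\geq 1$ (using Definition \ref{def:LinCombAlph}) allow one to express every symmetric function in $\X_{e_3}$ as an element of $\Sym(\X_{e_1}\cup \X_{e_2})$, yielding $R(\mathcal{W})/I(\mathcal{W}) \cong \Sym(\X_{e_1})\otimes \Sym(\X_{e_2}) \cong R^{\aa}$, which agrees with $B({}_{\aa'}M_{\aa})$ up to the shift declared in \eqref{eq:mergeandsplit}. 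The split case is identical.

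For the inductive step, write $\mathcal{W} = \mathcal{W}_1 \hComp \mathcal{W}_2$ with both $\mathcal{W}_i$ having strictly fewer vertices, and let $R_{\mathrm{mid}}$ denote the ring $R^{\aa}$ at the common interface. By definition of $\hComp$,
\[
B(\mathcal{W}) = B(\mathcal{W}_1) \otimes_{R_{\mathrm{mid}}} B(\mathcal{W}_2).
\]
On the combinatorial side, identifying the interface alphabets of $\mathcal{W}_1$ and $\mathcal{W}_2$ gives
\[
R(\mathcal{W}) \cong R(\mathcal{W}_1) \otimes_{R_{\mathrm{mid}}} R(\mathcal{W}_2),
\]
and since every trivalent vertex of $\mathcal{W}$ lies entirely within either $\mathcal{W}_1$ or $\mathcal{W}_2$, the ideal $I(\mathcal{W})$ is the sum of the images of $I(\mathcal{W}_1)$ and $I(\mathcal{W}_2)$. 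Right-exactness of tensor product together with the inductive hypothesis then yields
\[
R(\mathcal{W})/I(\mathcal{W}) \cong B(\mathcal{W}_1)\otimes_{R_{\mathrm{mid}}} B(\mathcal{W}_2) = B(\mathcal{W}).
\]
The external tensor product case $\mathcal{W} = \mathcal{W}_1 \boxtimes \mathcal{W}_2$ is handled identically, with $\K$ in place of $R_{\mathrm{mid}}$.

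The one point requiring care is verifying the factorization of $I(\mathcal{W})$ across the composition. This reduces to the observation that $\Lambda \cong \K[p_1,p_2,\dots]$, so $I(\mathcal{W})$ is generated by the linear combinations $p_k(\X_{e_1})+p_k(\X_{e_2})-p_k(\X_{e_3})$ over trivalent vertices of $\mathcal{W}$ and $k\geq 1$; each such generator involves only alphabets lying on one side of any horizontal decomposition. The only genuine obstacle I anticipate is organizational: consistently tracking which alphabets are identified under successive tensor products, and matching the outgoing/incoming edge rings with the bimodule structure on $B(\mathcal{W})$. None of this is mathematically deep, but clean bookkeeping (e.g., fixing a choice of alphabet per edge and interpreting $\hComp$ as literal identification of interface alphabets) makes the induction go through without fuss.
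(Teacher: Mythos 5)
Your proof is correct. The paper itself gives no argument (the lemma is introduced with ``The following is immediate'' and carries a terminal $\qed$ with no proof body), and your structural induction on the building operations $\hComp$ and $\boxtimes$, with merges/splits and identity strands as base cases, is the natural way to make that immediacy rigorous; the decomposition $R(\cal{W}) \cong R(\cal{W}_1)\otimes_{R_{\mathrm{mid}}} R(\cal{W}_2)$ together with the vertex-local generation of $I(\cal{W})$ is exactly the point.

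One small spot worth tightening: in the merge base case you should note explicitly that the ideal generated by all $f(\X_{e_1}+\X_{e_2}-\X_{e_3})$ (for $f$ of positive degree) coincides with the ideal generated by the finitely many $p_k(\X_{e_1})+p_k(\X_{e_2})-p_k(\X_{e_3})$ for $1 \leq k \leq a+b$, since $p_k$ for $k > a+b$ is a universal polynomial in $p_1,\ldots,p_{a+b}$ for alphabets of that size (applied to both $\X_{e_3}$ and $\X_{e_1}\cup\X_{e_2}$), and $\Lambda \cong \K[p_1,p_2,\ldots]$ with the map $f\mapsto f(\X_{e_1}+\X_{e_2}-\X_{e_3})$ being an algebra homomorphism. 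With that observation in place, the identification of the quotient with $\Sym(\X_{e_1})\otimes\Sym(\X_{e_2})\cong R^{\aa}$ is clean, and the rest of your induction goes through as written.
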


Despite this result, 
it is at times helpful to distinguish the bimodule $B(\cal{W})$
from the ring $R(\cal{W})/I(\cal{W})$.
Our primary use for the latter will be in specifying 
bimodule endomorphisms of $B(\cal{W})$.
Indeed, in the web-and-foam formalism for $\SSBim$, 
morphisms between singular Bott-Samelson bimodules $B(\cal{W})$ 
are described by (linear combinations of) foams, 
certain 2-dimensional CW complexes with facets labeled
by non-negative integers that are embedded in $[0,1]^3$
and carry decorations by symmetric polynomials on their facets. 
Such foams should be viewed as embedded singular cobordisms with
corners between the domain and codomain webs. 
In particular, elements of $R(\cal{W})/I(\cal{W})$ correspond to the 
singular cobordism $\cal{W} \times [0,1]$, with facets appropriately decorated.

However, almost all of the morphisms between singular Bott-Samelson bimodules needed for the
present work fall into two classes:
\begin{enumerate}
	\item endomorphisms of $B(\cal{W})$ given by multiplication by 
	elements in $R(\cal{W})/I(\cal{W})$, or
	\item those in the image of a $2$-functor from categorified quantum $\glm$
	(see \S \ref{sec:CQG}).
\end{enumerate}
As such, we will rarely use the language of foams, 
but see Appendix \ref{s:FoamSSBim} for a short dictionary.

\begin{conv}\label{conv:Alph}
In many places in the present work, we will consider endomorphisms of
Bott-Samelson bimodules corresponding to webs appearing in equation
\eqref{eq:AlphabetConv} below, for various edge labels. 
As shorthand, we assign alphabets of variables to each web edge 
with cardinality equal to the label on the edge as follows:
\begin{equation}\label{eq:AlphabetConv}
\begin{tikzpicture}[rotate=90,anchorbase]
	\draw[very thick] (0,.25) to [out=150,in=270] (-.25,1) node[left,xshift=2pt]{$\leftX_1$};
	\draw[very thick] (.5,.5) to (.5,1) node[left,xshift=2pt]{$\leftX_2$};
	\draw[very thick] (0,.25) to node[left,yshift=-1pt,xshift=1pt]{$\leftM$} (.5,.5);
	\draw[very thick] (0,-.25) to node[below,yshift=2pt]{$\Fr$} (0,.25);
	\draw[very thick] (.5,-.5) to [out=30,in=330] node[above,yshift=-2pt]{$\B$} (.5,.5);
	\draw[very thick] (0,-.25) to node[right,yshift=-1pt,xshift=-1pt]{$\rightM$} (.5,-.5);
	\draw[very thick] (.5,-1) node[right,xshift=-2pt]{$\rightX_2$} to (.5,-.5);
	\draw[very thick] (-.25,-1)node[right,xshift=-2pt]{$\rightX_1$} to [out=90,in=210] (0,-.25);
\end{tikzpicture}
\end{equation}
\end{conv} 

\begin{exa} For the web $\cal{W}$ from Convention~\ref{conv:Alph}, we have
\[
R(\cal{W}):= \Sym(\X_1|\X_2|\M |\Fr |\B| \M'|\X_1',\X_2')
\]
and $I(\cal{W})$ is the ideal generated by elements of the form
\[
f(\X_2 - \B - \M) \, , \quad f(\X_1+\M-\Fr) \, ,\quad f(\B+\M'-\X_2') \, ,\quad f(\Fr-\M'-\X_1'),
\]
or equivalently
\[
f(\X_2)-f(\B+\M) \, ,\quad f(\X_1+\M)-f(\Fr) \, ,\quad f(\B+\M')-f(\X_2') \, , \quad f(\Fr)-f(\M'+\X_1'),
\]
as $f$ ranges over all symmetric functions.
\end{exa}

\begin{rem}
\label{rem:symm} 
For every 1-morphism ${}_{\bb} M_{\aa}$ in $\SSBim_N$, we
have embeddings $R^{\mathfrak{S}_N} \hookrightarrow R^{\aa}$ and 
$R^{\mathfrak{S}_N} \hookrightarrow R^{\bb}$ and the 
endomorphisms of ${}_{\bb} M_{\aa}$ induced by
$f\in R^{\mathfrak{S}_N}$ on the left and on the right agree.
\end{rem}

\subsection{The dg category of complexes}
\label{ss:cxs}
In order to consider the braid group representation on singular Soergel bimodules, 
we must first discuss the monoidal dg 2-category of complexes of 
singular Soergel bimodules.
We being by recalling the basic framework of dg categories of complexes.

\begin{defi}\label{def:dgCatCom}
Let $\AS$ be a $\K$-linear category, 
then $\CS(\AS)$ denotes the dg category of bounded complexes over $\AS$.
Objects of this category are complexes
\[
\left(X, \delta_X \right) = 
\cdots \xrightarrow{\delta_X} X^k \xrightarrow{\delta_X} 
X^{k+1} \xrightarrow{\delta_X} \cdots
\]
in $\AS$ with $X^k=0$ for all but finitely many $k$. 
Morphism spaces in this category are complexes 
$(\Hom_{\CS(\AS)}(X,Y),d)$ where
\[
\Hom_{\CS(\AS)}^k(X,Y) := \prod_{i\in \Z}\Hom_{\AS}(X^i,Y^{i+k}) 
\]
and the component of the differential 
$d: \Hom_{\CS(\AS)}^k(X,Y) \rightarrow \Hom_{\CS(\AS)}^{k+1}(X,Y)$
is given by
\[
d(f) := [\d,f] = \d_Y\circ f - (-1)^{|f|} f\circ \d_X \, .
\]
\end{defi}

The notation $|f|=k$ means that $f$ is homogeneous of 
(homological) degree $k$, 
\ie that $f \in \Hom_{\CS(\AS)}^k(X,Y)$.
We say that such $f$ is \emph{closed} if $[\d,f]=0$ and 
\emph{exact} (or \emph{null-homotopic}) if $f=[\d,h]$ for some 
$h \in \Hom_{\CS(\AS)}^{k-1}(X,Y)$. 
The category $\CS(\AS)$ is endowed with an autoequivalence 
(homological) shift functor, that we denote by $\tdeg$. 
By convention, $\tdeg^k$ denotes a shift up in homological degree.

We will use the following to build certain complexes 
(in particular, to construct the left-hand side of the colored skein relation).

\begin{defi}\label{def:twist}
If $(X,\d_X)$ is a complex and 
$\a \in \End^1_{\CS(\AS)}(X)$ satisfies $(\d_X+\a)^2=0$, 
then we denote the complex $(X,\d_X+\a)$ by $\tw_\a(X)$. 
We will refer to $\tw_\a(X)$ as a \emph{twist} of the complex $(X,\d_X)$.
Further, we call $\tw_\a(X)$ a \emph{one-sided twisted complex},
if $X$ takes the form 
\[
(X,\delta) = \bigoplus_{i \in \Z} (X_i , \d_i)
\]
where the components $\alpha_{i,j} \colon X_j \to X_i$ of $\alpha$ 
satisfy $\alpha_{i,j} = 0$ for $i \leq j$.
\end{defi}

Note that any complex $(X,\d_X)$ can itself be written as 
a one-sided twisted complex
\[
X = \tw_{\d_X}\Big(\bigoplus_k \tdeg^k X^k\Big)
\]
where we view each $X^k$ as a complex concentrated in homological degree zero 
with differential.

\begin{remark}\label{rem:enrich}
If $\AS$ is enriched in a symmetric monoidal category $\KS$, then $\CS(\AS)$ is
enriched in the category of complexes $\CS(\KS)$. In particular, if
$\Hom$-spaces in $\AS$ are (already) $\Z$-graded $\K$-vector spaces, then
$\Hom$-spaces in $\CS(\AS)$ are $\Z\times \Z$-graded complexes of $\K$-vector spaces.
In this context, we will decorate the grading group by subscripts, e.g.
$\Z_{\qdeg}\times \Z_{\tdeg}$ to distinguish the internal $\Z_{\qdeg}=\Z$-grading from the
homological $\Z_{\tdeg}=\Z$-grading.\end{remark}

We are interested in complexes of singular Soergel bimodules. 
\begin{defi}
Let $\CS(\SSBim)$ be the monoidal 2-category with the same objects as $\SSBim$,
and wherein the $\Hom$-category $\aa\rightarrow \bb$ equals
$\CS({}_{\bb}\SSBim_{\aa})$ and the composition operations and monoidal
structure are inherited from $\SSBim$ and described below.
\end{defi}

In other words, 1-morphisms in $\CS(\SSBim)$ are complexes of Soergel bimodules, 
and 2-morphism spaces in $\CS(\SSBim)$ are complexes of bimodule maps.

\begin{conv}
In the notation of Remark \ref{rem:enrich}, 
the 1-morphism categories of $\SSBim$ are enriched in $\Z_{\qdeg}$-graded
$\K$-vector spaces,
so the 1-morphism category $\oone_{\bb}\CS(\SSBim)\oone_{\aa}$ is
enriched in $\Z_{\qdeg}\times\Z_{\tdeg}$-graded complexes of $\K$-vector spaces.  
We will use the convention that $\deg(f)=(i,j)$ means $f$ has $\qdeg$-degree (or
``Soergel degree'') $i$ and homological degree $j$. Further, the singly-indexed
$\Hom$-space $\Hom_{\CS(\SSBim)}^k(X,Y)$ always refers to homological degree,
while the doubly-indexed $\Hom_{\CS(\SSBim)}^{i,j}(X,Y)$ consists of $f$ with
$\deg(f)=(i,j)$. We will typically indicate these degrees multiplicatively by
writing $\wt(f) = \qdeg^i \tdeg^j$, and will also use the notation 
$\qdeg,\tdeg$ to denote the corresponding shift functors. 
\end{conv}

The (horizontal) composition of 1-morphisms is defined as usual:
\[
(X\hComp Y)^k = \bigoplus_{i+j=k} X^i\hComp Y^j
\, , \quad
\d_{X\hComp Y} = \d_{X}\hComp \Id_Y + \Id_X\hComp \d_Y \, .
\]
Here, the components of a horizontal composition of 2-morphisms 
are defined using the Koszul sign rule. 
Explicitly, if $f\in \Hom_{\CS(\SSBim)}(X,X')$ and $g\in
\Hom_{\CS(\SSBim)}(Y,Y')$ are given, then $f\hComp g$ is defined
component-wise by:
\[
(f\hComp g)|_{X^i\hComp Y^j} = (-1)^{i |g|} f|_{X^i}\hComp g|_{Y^j} \, .
\]
A direct computation shows that the (graded) middle interchange law is satisfied:
\[
(f_1\hComp g_1) \circ (f_2\hComp g_2) 
= (-1)^{|g_1| |f_2|}(f_1 \circ f_2) \hComp (g_1 \circ g_2) \, .
\]

The monoidal structure on $\CS(\SSBim)$ is given by extending the 
external tensor product $\boxtimes \colon \SSBim\rightarrow \SSBim$
to complexes, again following standard conventions. 
Explicitly, 
the external tensor product of 1-morphisms $X,Y\in \CS(\SSBim)$ is defined by
\[
(X\boxtimes Y)^k := \bigoplus_{i+j=k} X^i\boxtimes Y^j 
\, , \quad 
\d_{X\boxtimes Y} = \d_X\boxtimes\Id_Y +  \Id_X\boxtimes \d_Y
\]
where, as before, the external tensor product of 2-morphisms in 
$\CS(\SSBim)$ is defined component-wise using the Koszul sign rule:
\[
(f\boxtimes g)|_{X^i\boxtimes Y^j} = (-1)^{i |g|} f|_{X^i}\boxtimes g|_{Y^j}.
\]

It is straightforward to see that $\CS(\SSBim)$ is a monoidal 2-category in
which the 2-morphism spaces are $\Z_{\qdeg}\times \Z_{\tdeg}$-graded complexes, 
and all three of vertical composition $\circ$, horizontal composition $\hComp$, 
and external tensor product $\boxtimes$ of $2$-morphisms satisfy 
appropriate versions of the Leibniz rule; 
i.e. $\CS(\SSBim)$ is a differential $\Z_{\qdeg}\times \Z_{\tdeg}$-graded monoidal 2-category.  
Henceforth, we will slightly abuse terminology and simply refer to $\CS(\SSBim)$ 
as a dg monoidal 2-category (the additional grading on 2-morphism
complexes will be understood throughout).

We let $\KS(\SSBim) = H^0(\CS(\SSBim))$ be the \emph{cohomology category} of
$\CS(\SSBim)$. Its objects and 1-morphisms are the same as in $\CS(\SSBim)$, but
its 2-morphisms are now given by degree-zero cohomology classes in
$\Hom_{\CS(\SSBim)}(-,-)$, 
i.e. by degree-zero chain maps modulo homotopy.  
In other words, $\KS(\SSBim)$ is the usual homotopy category of (bounded)
complexes over $\SSBim$.  
The horizontal composition and external tensor product descend to $\KS(\SSBim)$,
making the latter into a triangulated monoidal 2-category.

\subsection{Categorified quantum \texorpdfstring{$\glm$}{gl(m)}}
\label{sec:CQG}

Let $\USd(\glm)$ denote the $\glm$ analogue of the
Khovanov-Lauda-Rouquier categorified quantum group \cite{KL,KL2,KL3,Rou2}
associated to the Lie algebra $\slm$. This
2-category is the Karoubi completion of the graded, additive 2-category
$\US(\glm)$ in which 
objects are $\glm$ weights $\aa = (a_1,\ldots,a_m)$, 
$1$-morphisms are generated by
\[
\E_i \one_{\aa} : \aa \to \aa + \varepsilon_i 
\, , \quad
\F_i \one_{\aa} : \aa \to \aa - \varepsilon_i
\]
for $i=1,\ldots,m-1$ (here $\varepsilon_i = (0,\ldots,0,1,-1,0,\ldots,0)$), and
$2$-morphisms are given using $\slm$ Khovanov-Lauda string diagrams. We will
assume some familiarity with the diagrammatic presentation of $\US(\glm)$;
in fact, only categorified $\glnn{2}$ computations will be used in this paper, 
so knowledge of the latter will suffice.

Of particular importance are the ``divided power'' $1$-morphisms $\E_i^{(k)}
\one_{\aa}$ and $\F_i^{(k)} \one_{\aa}$ in $\USd(\glm)$. These are
indecomposable $1$-morphisms that satisfy
\[
\E_i^{k} \one_{\aa} \cong \bigoplus_{[k]!} \E_i^{(k)} \one_{\aa}
\, , \quad
\F_i^{k} \one_{\aa} \cong \bigoplus_{[k]!} \F_i^{(k)} \one_{\aa}
\]
We will use $\USd(\glm)$ as a technical tool for studying $\mathrm{SSBim_N}$
via the following result.
This essentially appears in \cite{KL3}, 
but can also be deduced from the main result of \cite{QR} and the correspondence 
between foams and $\SSBim$.
\begin{prop}\label{prop:SH}
For $m \leq N$, there is a $2$-functor $\Phi\colon \US(\glm) \to \mathrm{SSBim_N}$ 
that extends to the full $2$-subcategory generated by the divided powers, 
that sends objects $\aa \mapsto R^{\aa}$ and 1-morphisms:
\[
\begin{aligned}
\one_{\aa} &\mapsto \oone_{\aa} \\
\E_i^{(k)} \one_{\aa} &\mapsto
\oone_{(a_1,\ldots,a_{i-1})} \boxtimes
\begin{tikzpicture}[smallnodes,rotate=90,anchorbase,scale=.75,yscale=-1]
	\draw[very thick] (0,.25) to [out=150,in=270] (-.25,1) 
		node[right,xshift=-2pt]{$a_i$};
	\draw[very thick] (.5,.5) to (.5,1) node[right,xshift=-2pt]{$a_{i+1}$};
	\draw[very thick] (0,.25) to node[left,xshift=2pt]{$k$} (.5,.5);
	\draw[very thick] (.5,-1) node[left,xshift=2pt]{$a_{i+1}{-}k$} 
		to [out=90,in=330] (.5,.5);
	\draw[very thick] (-.25,-1)node[left,xshift=2pt]{$a_i{+}k$} 
		to [out=90,in=270] (0,.25);
\end{tikzpicture}
\boxtimes \oone_{(a_{i+2},\ldots,a_m)} \\
\F_i^{(k)} \one_{\aa} &\mapsto
\oone_{(a_1,\ldots,a_{i-1})} \boxtimes
\begin{tikzpicture}[smallnodes,rotate=90,anchorbase,scale=.75]
	\draw[very thick] (0,.25) to [out=150,in=270] (-.25,1) 
		node[left,xshift=2pt]{$a_i{-}k$};
	\draw[very thick] (.5,.5) to (.5,1) node[left,xshift=2pt]{$a_{i+1}{+}k$};
	\draw[very thick] (0,.25) to node[right,xshift=-2pt]{$k$} (.5,.5);
	\draw[very thick] (.5,-1) node[right,xshift=-2pt]{$a_{i+1}$} 
		to [out=90,in=330] (.5,.5);
	\draw[very thick] (-.25,-1)node[right,xshift=-2pt]{$a_i$} 
		to [out=90,in=270] (0,.25);
\end{tikzpicture}
\boxtimes \oone_{(a_{i+2},\ldots,a_m)}
\end{aligned}
\]
\end{prop}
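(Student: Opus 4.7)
The plan is to construct $\Phi$ by invoking existing literature, since the heavy lifting has been done in prior work; the proof is essentially a careful translation. Concretely, there are two natural routes: either (a) follow \cite{KL3}, in which Khovanov--Lauda prove that the cohomology rings of partial flag varieties (which, after the standard identification with partially-symmetric polynomial rings, are precisely the rings $R^{\aa}$) carry a categorified $\glm$ action realized by induction and restriction between these rings; or (b) compose the 2-functor from $\US(\glm)$ into the $\slN$ foam 2-category constructed in \cite{QR} with the foam--to--singular-Soergel dictionary recalled in Section \ref{ss:ssbim} (and promised in Appendix \ref{s:FoamSSBim}). Both routes produce the same functor; I would present route (a) for directness and use (b) only to import relations not checked explicitly.

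The concrete steps are as follows. First, I would define $\Phi$ on objects by $\aa \mapsto R^{\aa}$ and on the generating 1-morphisms $\E_i \one_{\aa}$, $\F_i \one_{\aa}$ by the merge-split webs displayed in the statement (with $k=1$); these are readily checked to be the induction and restriction bimodules between $R^{\aa}$ and $R^{\aa \pm \varepsilon_i}$, up to the stated shifts. Second, I would define $\Phi$ on the generating 2-morphisms of $\US(\glm)$: the dot acts as multiplication by $e_1$ of the appropriate single-box alphabet on the web (\emph{i.e.}~by the generator of $R^{\aa+\varepsilon_i}$ over $R^{\aa}$); the upward strand crossings act by natural maps between iterated merge-split webs coming from the MOY-style relations; and the cups and caps realize the (shifted) biadjunction between $\Phi(\E_i \one_{\aa})$ and $\Phi(\F_i \one_{\aa+\varepsilon_i})$, which in turn follows from Frobenius extension structure of $R^{\aa+\varepsilon_i}$ over $R^{\aa}$.

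Third, I would verify the $\US(\glm)$ relations. The nilHecke relations on strings of a single color reduce to classical Demazure-operator identities on the polynomial ring; the mixed-color KLR relations reduce to straightforward identities in edge rings $R(\cal{W})$ via Lemma \ref{lem:edge}; the biadjunction and bubble relations follow from explicit computations in Frobenius extensions (cf.~Remark \ref{rem:symm}); and the categorified $\slm$ $(\E\F-\F\E)$ isomorphism corresponds to the well-known square-switch/MOY web decomposition. All of these are carried out in \cite{KL3}, so at this level the proof is citation plus translation.

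Finally, to extend $\Phi$ to $\USd(\glm)$, I invoke the universal property of idempotent completion: since $\SSBim_N$ was defined as the Karoubi envelope of singular Bott--Samelson bimodules, and $\USd(\glm)$ is the Karoubi envelope of $\US(\glm)$, it suffices to check that the KLR divided-power idempotent inside $\E_i^k \one_{\aa}$ (respectively $\F_i^k \one_{\aa}$) is sent under $\Phi$ to an idempotent in the iterated merge-split bimodule whose image is isomorphic to the specified width-$k$ merge-split web. This follows from matching the two decompositions $\E_i^k \cong \bigoplus_{[k]!} \E_i^{(k)}$ and the corresponding decomposition of the iterated merge-split via the Soergel categorification of the quantum binomial identity. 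The main obstacle in a self-contained proof would be the explicit verification of the $\US(\glm)$ relations as bimodule identities, but as noted this is precisely the content of \cite{KL3}.
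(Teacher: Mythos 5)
Your proposal is correct and takes essentially the same approach as the paper: the paper's own ``proof'' is a one-paragraph citation to \cite{KL3} (with a remark that one may also deduce the statement from the main result of \cite{QR} plus the foam-to-$\SSBim$ correspondence), and you identify both of these routes together with a reasonable roadmap for carrying either out. The only point you omit that the paper flags explicitly is a conventions mismatch with \cite{QR}: the $2$-functor there sends $\E_i^{(k)}$ and $\F_i^{(k)}$ to webs opposite to those in the statement, so your route (b) would need to be post-composed with an autoequivalence that reflects foams perpendicular to the page (and rescales certain generators by $\pm 1$) to yield $\Phi$ exactly as stated.
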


The value of $\Phi$ on $2$-morphisms can be deduced from 
\cite[Lemma 3.7, Theorem 3.9, and Corollary 3.10]{QR} 
and the correspondence between foams and singular Soergel bimodules. 
However, we caution the reader that the $2$-functor $\Phi$ appearing in 
Proposition \ref{prop:SH} does not agree on the nose with the one appearing 
in \cite{QR}, since our current conventions for where $\Phi$ sends the $1$-morphisms 
$\E_i^{(k)} \one_{\aa}$ and $\F_i^{(k)} \one_{\aa}$ are opposite. 
Indeed, it is obtained from the $2$-functor in \cite{QR} by further composing with 
an autoequivalence that reflects foams in the direction perpendicular to the page 
(and rescales certain generators by $\pm1$).

The $m=2$ case will be particularly important.  
In this case,
\[
\F^{(l)} \E^{(k)}\one_{a,b}
\xmapsto{\Phi}
\begin{tikzpicture}[rotate=90,anchorbase,smallnodes]
	\draw[very thick] (0,.25) to [out=150,in=270] (-.25,1) 
		node[left,xshift=2pt]{$a{+}k{-}l$};
	\draw[very thick] (.5,.5) to (.5,1) node[left,xshift=2pt]{$b{-}k{+}l$};
	\draw[very thick] (0,.25) to node[left,yshift=-1pt]{$l$} (.5,.5);
	\draw[very thick] (0,-.25) to node[below,yshift=-1pt]{$a{+}k$} (0,.25);
	\draw[very thick] (.5,-.5) to [out=30,in=330] 
		node[above,yshift=-2pt]{$b{-}k$} (.5,.5);
	\draw[very thick] (0,-.25) to node[right,yshift=-1pt,xshift=-1pt]{$k$} (.5,-.5);
	\draw[very thick] (.5,-1) node[right,xshift=-2pt]{$b$} to (.5,-.5);
	\draw[very thick] (-.25,-1)node[right,xshift=-2pt]{$a$} 
		to [out=90,in=210] (0,-.25);
\end{tikzpicture}
\]
and all $2$-morphisms in $\USd(\glnn{2})$ can be described using the 
extended graphical calculus from \cite{KLMS}. 
For example, the following give $2$-morphisms in $\SSBim$
that will appear throughout this paper:
\begin{equation}\label{eq:chi plus}
\chi_r^+ :=
\Phi \left(
\CQGsgn{(-1)^{b-k}}
\begin{tikzpicture}[baseline=0,smallnodes]
\draw[CQG,ultra thick,<-] (0,-.5) node[below]{\scriptsize$l$}  to (0,.7);
\draw[CQG,ultra thick,->] (.75,-.5) node[below]{\scriptsize$k$} to (.75,.7);
\draw[CQG,thick, directed=.75] (.75,0) to [out=90,in=90] 
	node[black,yshift=-.5pt]{$\bullet$} node[below,black]{\scriptsize$r$} (0,0);
\node at (1.25,.5) {$(a,b)$};
\end{tikzpicture}
\right)
:
\begin{tikzpicture}[rotate=90,anchorbase,smallnodes]
	\draw[very thick] (0,.25) to [out=150,in=270] (-.25,1) 
		node[left,xshift=2pt]{$a{+}k{-}l$};
	\draw[very thick] (.5,.5) to (.5,1) node[left,xshift=2pt]{$b{-}k{+}l$};
	\draw[very thick] (0,.25) to node[left,yshift=-1pt]{$l$} (.5,.5);
	\draw[very thick] (0,-.25) to node[below,yshift=-1pt]{$a{+}k$} (0,.25);
	\draw[very thick] (.5,-.5) to [out=30,in=330] 
		node[above,yshift=-2pt]{$b{-}k$} (.5,.5);
	\draw[very thick] (0,-.25) to node[right,yshift=-1pt,xshift=-1pt]{$k$} (.5,-.5);
	\draw[very thick] (.5,-1) node[right,xshift=-2pt]{$b$} to (.5,-.5);
	\draw[very thick] (-.25,-1)node[right,xshift=-2pt]{$a$} 
		to [out=90,in=210] (0,-.25);
\end{tikzpicture}
\longrightarrow
\begin{tikzpicture}[rotate=90,anchorbase,smallnodes]
	\draw[very thick] (0,.25) to [out=150,in=270] (-.25,1) 
		node[left,xshift=2pt]{$a{+}k{-}l$};
	\draw[very thick] (.5,.5) to (.5,1) node[left,xshift=2pt]{$b{-}k{+}l$};
	\draw[very thick] (0,.25) to node[left,yshift=-1pt]{$l{-}1$} (.5,.5);
	\draw[very thick] (0,-.25) to node[below,yshift=-3pt]{$a{+}k{-}1$} (0,.25);
	\draw[very thick] (.5,-.5) to [out=30,in=330] 
		node[above,yshift=-2pt]{$b{-}k{+}1$} (.5,.5);
	\draw[very thick] (0,-.25) to node[right,yshift=-1pt,xshift=-1pt]{$k{-}1$} (.5,-.5);
	\draw[very thick] (.5,-1) node[right,xshift=-2pt]{$b$} to (.5,-.5);
	\draw[very thick] (-.25,-1)node[right,xshift=-2pt]{$a$} 
		to [out=90,in=210] (0,-.25);
\end{tikzpicture}
\end{equation}
\begin{equation}\label{eq:chi minus}
\chi_r^- :=
\Phi \left(
\CQGsgn{(-1)^{a+b+k+l-1}}
\begin{tikzpicture}[baseline=0,smallnodes,yscale=-1]
\draw[CQG,ultra thick,->] (0,-.5) to (0,.7) node[below]{\scriptsize$l$};
\draw[CQG,ultra thick,<-] (.75,-.5) to (.75,.7) node[below]{\scriptsize$k$};
\draw[CQG,thick, rdirected=.3] (.75,0) to [out=90,in=90] 
	node[black,yshift=-.5pt]{$\bullet$} node[below,black]{\scriptsize$r$} (0,0);
\node at (1.25,.5) {$(a,b)$};
\end{tikzpicture}
\right)
:
\begin{tikzpicture}[rotate=90,anchorbase,smallnodes]
	\draw[very thick] (0,.25) to [out=150,in=270] (-.25,1) 
		node[left,xshift=2pt]{$a{+}k{-}l$};
	\draw[very thick] (.5,.5) to (.5,1) node[left,xshift=2pt]{$b{-}k{+}l$};
	\draw[very thick] (0,.25) to node[left,yshift=-1pt]{$l$} (.5,.5);
	\draw[very thick] (0,-.25) to node[below,yshift=-1pt]{$a{+}k$} (0,.25);
	\draw[very thick] (.5,-.5) to [out=30,in=330] 
		node[above,yshift=-2pt]{$b{-}k$} (.5,.5);
	\draw[very thick] (0,-.25) to node[right,yshift=-1pt,xshift=-1pt]{$k$} (.5,-.5);
	\draw[very thick] (.5,-1) node[right,xshift=-2pt]{$b$} to (.5,-.5);
	\draw[very thick] (-.25,-1)node[right,xshift=-2pt]{$a$} 
		to [out=90,in=210] (0,-.25);
\end{tikzpicture}
\longrightarrow
\begin{tikzpicture}[rotate=90,anchorbase,smallnodes]
	\draw[very thick] (0,.25) to [out=150,in=270] (-.25,1) 
		node[left,xshift=2pt]{$a{+}k{-}l$};
	\draw[very thick] (.5,.5) to (.5,1) node[left,xshift=2pt]{$b{-}k{+}l$};
	\draw[very thick] (0,.25) to node[left,yshift=-1pt]{$l{+}1$} (.5,.5);
	\draw[very thick] (0,-.25) to node[below,yshift=-3pt]{$a{+}k{+}1$} (0,.25);
	\draw[very thick] (.5,-.5) to [out=30,in=330] 
		node[above,yshift=-2pt]{$b{-}k{-}1$} (.5,.5);
	\draw[very thick] (0,-.25) to node[right,yshift=-1pt,xshift=-1pt]{$k{+}1$} (.5,-.5);
	\draw[very thick] (.5,-1) node[right,xshift=-2pt]{$b$} to (.5,-.5);
	\draw[very thick] (-.25,-1)node[right,xshift=-2pt]{$a$} 
		to [out=90,in=210] (0,-.25);
\end{tikzpicture}
\end{equation}
Both of these $2$-morphisms have $q$-degree equal to $1+2r+(a-b)+(k-l)$.
The {\color{CQG} green} signs appearing here
(and in some places below) are conventional, 
and guarantee that the image is the bimodule morphism 
corresponding to an \emph{unsigned} foam.
See Appendix \ref{s:FoamSSBim} for the translation between foams 
and bimodule morphisms.

\begin{rem}\label{rem:Decoration}
If $f,g$ are symmetric functions, 
then, by Convention \ref{conv:Alph}, $f(\M)\otimes g(\M')$ is a well-defined 
endomorphisms of $\Phi(\F^{(l)} E^{(k)}\one_{a,b})$.
In fact, this endomorphism is in the image of $\Phi$, 
and is described in extended graphical calculus as:
\[
\begin{tikzpicture}[smallnodes]
\draw[CQG,ultra thick,<-] (0,0) node[below]{\scriptsize$l$} to 
	node[black,yshift=-3pt]{$\CQGbox{f}$} (0,1);
\draw[CQG,ultra thick,->] (.75,0) node[below]{\scriptsize$k$} to 
	node[black,yshift=-3pt]{$\CQGbox{g}$} (.75,1);
\end{tikzpicture}
\]
\end{rem}

\begin{rem} 
The graphical calculus for $\USd(\glnn{2})$ contains cap and cup morphisms
between the identity morphisms $\one_{\aa}$ and the (horizontal) compositions
$\F^{(k)} \E^{(k)}\one_{\aa}$ and $\E^{(k)} \F^{(k)}\one_{\aa}$. Vertical
composition of these cap and cup morphisms with 
endomorphisms (as in Remark \ref{rem:Decoration}) give so-called 
\emph{bubble endomorphisms} of $\one_{\aa}$. 
To record the images of these endomorphisms under $\Phi$, let us
denote the alphabets associated to $\oone_{\aa} = \oone_{a,b}$ by $\Fr$ with
$|\Fr|=a$ and $\B$ with $|\B|=b$.
This is compatible with Convention~\ref{conv:Alph}, 
since in the case of no rungs we have $\leftX_1 = \Fr=\rightX_1$ 
and $\leftX_2 = \B = \rightX_2$.

In the case of a \emph{thin bubble} (the $k=1$ case), 
\cite[(3.10) and (3.14)]{QR} imply that
\begin{equation}\label{eq:bubble}
\Phi\left(
	\begin{tikzpicture}[anchorbase,smallnodes]
\draw[CQG,thick,directed=.25] (1,0) arc (0:361:.25) node[black,pos=.75]{$\bullet$} 
	node[below,black,pos=.75]{\scriptsize$\spadesuit{+}r$};
	node
\end{tikzpicture}
\right)
= h_r(\B-\Fr) 
\, , \quad
\Phi\left(
	\begin{tikzpicture}[anchorbase,smallnodes]
\draw[CQG,thick,rdirected=.25] (1,0) arc (0:361:.25) node[black,pos=.75]{$\bullet$} 
	node[below,black,pos=.75]{\scriptsize$\spadesuit{+}r$};
\end{tikzpicture}
\right)
= h_r(\Fr-\B) 
\end{equation}
Here the $\spadesuit$ is a placeholder for a minimal decoration required to
obtain a non-trivial evaluation 
(the precise value, which depends on the weight $\aa$, will not be relevant here).
The values of \emph{thick bubbles} ($k>1$) are then
\begin{equation}\label{eq:thickbubble}
	\Phi\left(
	\begin{tikzpicture}[anchorbase,smallnodes]
		\draw[CQG,ultra thick,directed=.25] (1.5,0) arc (0:361:0.5) node[right]{$k$}
			node[black,pos=.75]{$\CQGbbox{\mathfrak{s}_\alpha^\spadesuit}$};
	\end{tikzpicture}
	\!\!
	\right)
	= (-1)^{k(k-1)/2} \mathfrak{s}_{\alpha}(\B-\Fr)
\, , \quad
	\Phi\left(
	\begin{tikzpicture}[anchorbase,smallnodes]
		\draw[CQG,ultra thick,rdirected=.25] (1.5,0) arc (0:361:0.5) node[right]{$k$}
			node[black,pos=.75]{$\CQGbbox{\mathfrak{s}_\alpha^\spadesuit}$};
	\end{tikzpicture}
	\!\!
	\right)
	= (-1)^{k(k-1)/2} \mathfrak{s}_{\alpha}(\Fr - \B)
\end{equation}
which can be deduced from \eqref{eq:bubble}, 
e.g. using \cite[(4.33) and (4.34)]{KLMS} and the Jacobi-Trudi formula.
\end{rem}

\begin{conv}\label{conv:CQG}
In the following, we will almost exclusively be
interested in the images of $1$- and $2$-morphisms of $\USd(\glm)$ under $\Phi$, 
rather than the elements in the categorified quantum group itself.
As such, we will omit $\Phi$ from our notation and use the notation in $\USd(\glm)$
(but with the identity $1$-morphisms $\one_{\aa}$ in $\USd(\glm)$ replaced 
by the identity $1$-morphisms $\oone_{\aa}$ in $\SSBim$)
to denote the corresponding $1$- and $2$-morphisms in $\SSBim$.
\end{conv}

\subsection{Rickard complexes}
\label{sec:colbraid}

In this section, we recall the complexes of singular Soergel bimodules assigned
to colored braids. To begin, fix a set of \emph{colors} $S$, which will 
be $\Z_{\geq 1}$ in this paper. Let $\Br_m$ denote the $m$-strand
braid group, which acts on $S^m$ by permuting coordinates (this action factors
through the symmetric group
$\mathfrak{S}_m$).

\begin{defi}\label{def:CBG}
The \emph{$S$-colored braid groupoid} $\mathfrak{Br}(S)$ is the category wherein
objects are sequences $(a_1,\ldots,a_m)$ with $a_i\in S$, $m\geq 0$, and
morphisms given by
\[
\Hom_{\mathfrak{Br}(S)}(\aa,\bb)=\left\{\b \in \Br_m\:|\: a_i=b_{\b(i)} 
\text{ for } 1 \leq i \leq m \right\}
\]
with $\aa=(a_1,\ldots,a_m)$ and $\bb=(b_1,\ldots,b_m)$. 
\end{defi}

Morphisms in $\mathfrak{Br}(S)$ are called \emph{colored braids}, and
elements in $\Hom_{\mathfrak{Br}}(\aa,\bb)$ will be denoted by ${}_{\bb}
\b_{\aa}$, or occasionally by ${}_{\bb} \b$ or $ \b_{\aa}$ since the
domain/codomain are determined by one another.

Given a braid $\b\in \Br_m$, a \emph{strand} of $\b$ is a pair of indices
$(i,j)\in \{1,\ldots,m\}^2$ with $i=\b(j)$. In the topological interpretation of
$\Br_m$, a strand of $\b$ corresponds to a connected component. Denote the set
of strands of $\beta$ by $\mathrm{strands}(\b)$. A colored braid ${}_{\bb}
\b_{\aa}$ gives rise to a well-defined function
\begin{equation}\label{eq:strand labelling}
\phi:\mathrm{strands}(\b) \rightarrow \Z_{\geq 1}
\end{equation}
defined by declaring $\phi(s) = b_i = a_j$, 
where $s$ is the strand $s=(i,j)$ (with $i=\b(j)$).
Conversely, given $\b \in \Br_m$, we can associate 
to it a colored braid ${}_{\bb} \b_{\aa}$ by specifying a function 
as in \eqref{eq:strand labelling}.

The colored braid groupoid is generated by the colored Artin generators
\[
\sigma_i : (a_1,\ldots,a_i,a_{i+1},\ldots,a_m) \rightarrow 
(a_1,\ldots,a_{i+1},a_i,\ldots,a_m)
\]
which, when composable, satisfy relations analogous to the usual (type $A$) 
braid relations.
A \emph{colored braid word} is a sequence of colored Artin generators 
and their inverses. 
We say that a colored braid word $(\underline{\b})_{\aa}$ \emph{represents} the
corresponding product of colored Artin generators in $\mathfrak{Br}(S)$.   

We now use the colored Artin generators to associate complexes
$C({}_{\bb}\b_{\aa})$ in $\SSBim$ to colored braids ${}_{\bb} {\b}_{\aa}$.
Strictly speaking $C({}_{\bb}\b_{\aa})$ depends on a choice of colored braid
word $\underline{\b}$ representing $\b$, but two different choices are
(canonically) homotopy equivalent; see Proposition~\ref{prop:rickard invariance}
below. We often abuse notation by writing:
\[
C({}_{\bb} \b_{\aa}) = \oone_{\bb}  C(\b)  \oone_{\aa} 
= \oone_{\bb}  C(\b) = C(\b)  \oone_{\aa} \, .
\]
(Note that $C(\b)$ alone does not denote a well-defined complex.) We will define
$C(\b)\oone_{\aa}$ by first defining it for the colored Artin generators
$\sigma_i^\pm$, and then extending to arbitrary braid words using horizontal
composition $\hComp$.  
In turn, to define $C(\sigma_i^\pm)\oone_{\aa}$ it suffices to consider the $m=2$
case and extend to arbitrary $m$ using the external tensor product.

\begin{defi}\label{def:Rickardcx}
Let $a,b \geq 0$. 
The \emph{$2$-strand Rickard complex} $C_{a,b}$ is the (bounded) complex
\[
\begin{aligned}
C_{a,b} :=& 
\left\llbracket
\begin{tikzpicture}[rotate=90,scale=.5,smallnodes,anchorbase]
	\draw[very thick] (1,-1) node[right,xshift=-2pt]{$b$} to [out=90,in=270] (0,1);
	\draw[line width=5pt,color=white] (0,-1) to [out=90,in=270] (1,1);
	\draw[very thick] (0,-1) node[right,xshift=-2pt]{$a$} to [out=90,in=270] (1,1);
\end{tikzpicture}
\right\rrbracket :=
\left(
\cdots 
\xrightarrow{\;\; \chi_0^+ \;}
\qdeg^{-k} \tdeg^k
\begin{tikzpicture}[smallnodes,rotate=90,anchorbase,scale=.66]
	\draw[very thick] (0,.25) to [out=150,in=270] (-.25,1) node[left,xshift=2pt]{$b$};
	\draw[very thick] (.5,.5) to (.5,1) node[left,xshift=2pt]{$a$};
	\draw[very thick] (0,.25) to (.5,.5);
	\draw[very thick] (0,-.25) to (0,.25);
	\draw[very thick] (.5,-.5) to [out=30,in=330] node[above,yshift=-2pt]{$k$} (.5,.5);
	\draw[very thick] (0,-.25) to (.5,-.5);
	\draw[very thick] (.5,-1) node[right,xshift=-2pt]{$b$} to (.5,-.5);
	\draw[very thick] (-.25,-1)node[right,xshift=-2pt]{$a$} to [out=90,in=210] (0,-.25);
\end{tikzpicture}
\xrightarrow{\;\; \chi_0^+ \;}
\qdeg^{-k-1}\tdeg^{k+1}
\begin{tikzpicture}[smallnodes,rotate=90,anchorbase,scale=.66]
	\draw[very thick] (0,.25) to [out=150,in=270] (-.25,1) node[left,xshift=2pt]{$b$};
	\draw[very thick] (.5,.5) to (.5,1) node[left,xshift=2pt]{$a$};
	\draw[very thick] (0,.25) to (.5,.5);
	\draw[very thick] (0,-.25) to (0,.25);
	\draw[very thick] (.5,-.5) to [out=30,in=330] 
		node[above,yshift=-2pt]{$k{+}1$} (.5,.5);
	\draw[very thick] (0,-.25) to (.5,-.5);
	\draw[very thick] (.5,-1) node[right,xshift=-2pt]{$b$} to (.5,-.5);
	\draw[very thick] (-.25,-1)node[right,xshift=-2pt]{$a$} to [out=90,in=210] (0,-.25);
\end{tikzpicture}
\xrightarrow{\;\; \chi_0^+ \;}
\cdots
\right)
\end{aligned}
\]
of singular Soergel bimodules. The rightmost non-zero term is either
$\qdeg^{-b}\tdeg^b \F^{(a-b)}\oone_{a,b}$ or $\qdeg^{-a}\tdeg^a
\E^{(b-a)}\oone_{a,b}$ (via Convention \ref{conv:CQG}) depending on whether
$a\geq b$ or $a\leq b$, respectively. 
As a graded object, we identify
$C_{a,b}=\bigoplus_{k=0}^{\min(a,b)} \qdeg^{-k}\tdeg^k C_{a,b}^k$, 
where $C_{a,b}^k = \F^{(a-k)} \E^{(b-k)} \oone_{a,b}$.
\end{defi}

\begin{rem}
In some works, the complex in Definition \ref{def:Rickardcx} is used only in the 
case that $a \geq b$, and is instead replaced by an analogously defined complex
\[
\left(
\cdots 
\xrightarrow{\;\; \chi_0^+ \;}
\qdeg^{-k} \tdeg^k
\begin{tikzpicture}[smallnodes,rotate=90,anchorbase,scale=.66, xscale=-1]
	\draw[very thick] (0,.25) to [out=150,in=270] (-.25,1) node[left,xshift=2pt]{$a$};
	\draw[very thick] (.5,.5) to (.5,1) node[left,xshift=2pt]{$b$};
	\draw[very thick] (0,.25) to (.5,.5);
	\draw[very thick] (0,-.25) to (0,.25);
	\draw[very thick] (.5,-.5) to [out=30,in=330] node[below,yshift=1pt]{$k$} (.5,.5);
	\draw[very thick] (0,-.25) to (.5,-.5);
	\draw[very thick] (.5,-1) node[right,xshift=-2pt]{$a$} to (.5,-.5);
	\draw[very thick] (-.25,-1)node[right,xshift=-2pt]{$b$} to [out=90,in=210] (0,-.25);
\end{tikzpicture}
\xrightarrow{\;\; \chi_0^+ \;}
\qdeg^{-k-1}\tdeg^{k+1}
\begin{tikzpicture}[smallnodes,rotate=90,anchorbase,scale=.66, xscale=-1]
	\draw[very thick] (0,.25) to [out=150,in=270] (-.25,1) node[left,xshift=2pt]{$a$};
	\draw[very thick] (.5,.5) to (.5,1) node[left,xshift=2pt]{$b$};
	\draw[very thick] (0,.25) to (.5,.5);
	\draw[very thick] (0,-.25) to (0,.25);
	\draw[very thick] (.5,-.5) to [out=30,in=330] node[below,yshift=1pt]{$k{+}1$} (.5,.5);
	\draw[very thick] (0,-.25) to (.5,-.5);
	\draw[very thick] (.5,-1) node[right,xshift=-2pt]{$a$} to (.5,-.5);
	\draw[very thick] (-.25,-1)node[right,xshift=-2pt]{$b$} to [out=90,in=210] (0,-.25);
\end{tikzpicture}
\xrightarrow{\;\; \chi_0^+ \;}
\cdots
\right)
\]
when $a < b$.
However, it follows \eg from \cite[Corollary 5.5]{KLMS} 
that these complexes are isomorphic for all $a,b \geq 0$.
\end{rem}

For $\b=\sigma_i$ and $\aa=(a_1,\ldots,a_m)$, we then set
\begin{equation}\label{eq:ArtinGen}
\begin{aligned}
C(\sigma_i) \oone_{\aa} &:= \oone_{(a_1,\ldots,a_{i-1})}\boxtimes 
C_{a_i,a_{i+1}}\boxtimes \oone_{(a_{i+2} \ldots,a_m)} \\
C(\sigma_i\inv) \oone_{\aa} &:= \oone_{(a_1,\ldots,a_{i-1})}\boxtimes 
C_{a_{i},a_{i+1}}^\vee\boxtimes \oone_{(a_{i+2},\ldots,a_m)}
\end{aligned}
\end{equation}
where $C_{a_i,a_{i+1}}$ is the 2-strand Rickard complex from
Definition~\ref{def:Rickardcx} and $C_{a_{i+1},a_i}^\vee$ is its inverse. 
The latter is obtained from $C_{a_i,a_{i+1}}$ by applying 
the contravariant duality functor 
$(-)^\vee := \Hom_{R^{(a_i,a_{i+1})}}(-,R^{(a_i,a_{i+1})})$ 
and is explicitly given by
\[
C^\vee_{a,b} 
:= 
	\left\llbracket
	\begin{tikzpicture}[rotate=90,yscale=.5,xscale=-.5,smallnodes,anchorbase]
		\draw[very thick] (1,-1) node[right,xshift=-2pt]{$a$} to [out=90,in=270] (0,1);
		\draw[line width=5pt,color=white] (0,-1) to [out=90,in=270] (1,1);
		\draw[very thick] (0,-1) node[right,xshift=-2pt]{$b$} to [out=90,in=270] (1,1);
	\end{tikzpicture}
	\right\rrbracket 
:=
\left(
	\cdots 
	\xrightarrow{\;\; \chi_0^- \;}
	\qdeg^{k+1} \tdeg^{-k-1}
	\begin{tikzpicture}[smallnodes,rotate=90,anchorbase,scale=.75]
		\draw[very thick] (0,.25) to [out=150,in=270] (-.25,1) node[left,xshift=2pt]{$b$};
		\draw[very thick] (.5,.5) to (.5,1) node[left,xshift=2pt]{$a$};
		\draw[very thick] (0,.25) to (.5,.5);
		\draw[very thick] (0,-.25) to (0,.25);
		\draw[very thick] (.5,-.5) to [out=30,in=330] node[above,yshift=-2pt]{$k{+}1$} (.5,.5);
		\draw[very thick] (0,-.25) to (.5,-.5);
		\draw[very thick] (.5,-1) node[right,xshift=-2pt]{$b$} to (.5,-.5);
		\draw[very thick] (-.25,-1)node[right,xshift=-2pt]{$a$} to [out=90,in=210] (0,-.25);
	\end{tikzpicture}
	\xrightarrow{\;\; \chi_0^- \;}
	\qdeg^{k}\tdeg^{-k}
	\begin{tikzpicture}[smallnodes,rotate=90,anchorbase,scale=.75]
		\draw[very thick] (0,.25) to [out=150,in=270] (-.25,1) node[left,xshift=2pt]{$b$};
		\draw[very thick] (.5,.5) to (.5,1) node[left,xshift=2pt]{$a$};
		\draw[very thick] (0,.25) to (.5,.5);
		\draw[very thick] (0,-.25) to (0,.25);
		\draw[very thick] (.5,-.5) to [out=30,in=330] 
			node[above,yshift=-2pt]{$k$} (.5,.5);
		\draw[very thick] (0,-.25) to (.5,-.5);
		\draw[very thick] (.5,-1) node[right,xshift=-2pt]{$b$} to (.5,-.5);
		\draw[very thick] (-.25,-1)node[right,xshift=-2pt]{$a$} to [out=90,in=210] (0,-.25);
	\end{tikzpicture}
	\xrightarrow{\;\; \chi_0^- \;}
	\cdots
\right) \, .
\]

The assignment \eqref{eq:ArtinGen} extends to
arbitrary colored braid words using horizontal composition:
\begin{equation}\label{eq:Rick-hComp}
C(\sigma_{i_1}^{\e_1}\cdots \sigma_{i_r}^{\e_r})\oone_{\aa}
:= C(\sigma_{i_1}^{\e_1}) \hComp \cdots \hComp C(\sigma_{i_r}^{\e_r})\oone_{\aa}
\end{equation}
for $\e_1,\ldots,\e_r\in \{+1,-1\}$ and $1\leq i_1,\ldots,i_r\leq m-1$.

\begin{prop}\label{prop:rickard invariance}
The complexes $C(\sigma_{i_1}^{\e_1}\cdots \sigma_{i_r}^{\e_r})\oone_{\aa}$
satisfy the (colored) braid relations, up to canonical homotopy equivalence.
\end{prop}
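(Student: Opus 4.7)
The plan is to verify three families of relations among the complexes $C(\sigma_i^{\pm})\oone_{\aa}$ defined in \eqref{eq:ArtinGen} and \eqref{eq:Rick-hComp}: (i) far commutativity $\sigma_i \sigma_j \simeq \sigma_j \sigma_i$ for $|i-j| \geq 2$; (ii) the Reidemeister~II relations $\sigma_i \sigma_i^{-1} \simeq \mathrm{id} \simeq \sigma_i^{-1} \sigma_i$; and (iii) the Reidemeister~III braid relation $\sigma_i \sigma_{i+1} \sigma_i \simeq \sigma_{i+1} \sigma_i \sigma_{i+1}$, together with its variants involving inverse generators. Standard arguments (cf.\ e.g.~the passage from Reidemeister moves to braid-group relations) show that these three families suffice.

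Far commutativity is immediate from the monoidal structure: when $|i-j|\geq 2$, both $C(\sigma_i)\oone_{\aa}$ and $C(\sigma_j)\oone_{\aa}$ are defined by tensoring a $2$-strand Rickard complex against identity $1$-morphisms on disjoint regions. The graded interchange law for $\hComp$ and $\boxtimes$ in $\CS(\SSBim)$ (cf.\ Section~\ref{ss:cxs}) then produces a canonical isomorphism, not merely a homotopy equivalence, between the two triple composites. For the Reidemeister~II relations, the external tensor product reduces one to showing $C_{a,b} \hComp C_{a,b}^\vee \simeq \oone_{(a,b)} \simeq C_{a,b}^\vee \hComp C_{a,b}$ for all $a,b\geq 0$. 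I would prove this by expanding the horizontal composite as a bicomplex and performing Gaussian elimination step by step, using the $\USd(\glnn{2})$ graphical calculus of \cite{KLMS} and the bubble identities \eqref{eq:bubble}--\eqref{eq:thickbubble} to check that the relevant components of the differential become invertible after each cancellation; here the sign conventions fixed in \eqref{eq:chi plus}--\eqref{eq:chi minus} ensure the cancellations have the correct signs.

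The substantive case is the braid relation (iii), which by $\boxtimes$ reduces to the three-strand situation with the middle generator carrying arbitrary colors $(a_1,a_2,a_3)$. The plan is to deduce it from the corresponding braid relation for Rickard complexes in Kac--Moody $2$-representations, established in the $\mathfrak{sl}_2$ setting by Chuang--Rouquier and in general by Cautis--Kamnitzer--Licata. Concretely, the complexes $C_{a,b}$ of Definition~\ref{def:Rickardcx} are, term by term, the images under $\Phi$ of the Chuang--Rouquier complexes built from the divided powers $\E_i^{(k)}\oone_{\aa}$ and $\F_i^{(k)}\oone_{\aa}$, and the differentials $\chi_0^+$ are (up to sign) the images under $\Phi$ of the universal $2$-morphisms appearing in those complexes. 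Thus, invoking naturality of $\Phi$ with respect to horizontal composition, the braid relation transports from $\USd(\glnn{3})$ (or $\USd(\glm)$ for larger $m$, absorbing additional strands into the weight) to $\SSBim$.

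The main obstacle is ensuring that the cited braid-relations theorem applies in our setting, i.e.\ that the image of $\Phi$ genuinely carries an integrable $\glm$-categorification with the direct-sum decompositions $\E_i\F_i\oone_{\aa} \cong \F_i\E_i\oone_{\aa}\oplus\bigoplus \oone_{\aa}$ required by Chuang--Rouquier/Cautis--Kamnitzer--Licata. This is built into the statement of Proposition~\ref{prop:SH} (which extends $\Phi$ to divided powers) together with the known decomposition of singular Bott--Samelson bimodules associated to rungs of height $1$; once this check is made, naturality does the rest. An alternative, more hands-on route---writing out both sides of (iii) as explicit one-sided twisted complexes and exhibiting mutually inverse homotopy equivalences using the extended calculus of \cite{KLMS}---is available but considerably more laborious, and we only resort to it if the abstract transfer requires any of the additional integrality arguments flagged in the \textbf{Convention} at the end of Section~\ref{s:introone}.
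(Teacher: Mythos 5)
Your route for existence is essentially the one the paper takes: reduce by $\boxtimes$ to the local (two- and three-strand) relations, and deduce the braid relation from the categorified quantum group theorems of Cautis--Kamnitzer and Cautis--Kamnitzer--Licata by transport along the $2$-functor $\Phi$ of Proposition~\ref{prop:SH}. Your far-commutativity argument (a genuine isomorphism from the monoidal structure) and your reliance on $\Phi$ preserving horizontal composition are both sound. For Reidemeister~II you propose a hands-on Gaussian elimination instead of citing \cite{CKbraiding,CKL}; that also works, but is not what the paper does and is considerably more work than necessary given that invertibility of the Rickard complexes is already part of the cited package.

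The genuine gap is that the proposition asserts the braid relations hold \emph{up to canonical homotopy equivalence}, and your argument only produces existence of \emph{some} homotopy equivalence witnessing each relation. For the invariant $C(\beta)\oone_{\aa}$ to be well-defined as a functor from the colored braid groupoid (rather than only up to non-canonical equivalence), one must choose the equivalences coherently, so that different ways of composing them between two braid words give homotopic maps. You never address how to normalize these choices, and the abstract transport along $\Phi$ does not by itself produce a coherent system. The paper's proof deals with this explicitly: it observes that each relevant homotopy equivalence lives in a one-dimensional $\Hom$-space in $\KS(\SSBim)$, so the only ambiguity is a scalar, and then imports a coherent choice of scalars from the functoriality results for $\glnn{N}$ foams ($N\gg 0$) established in \cite{ETW}. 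Without a step of this kind (or an alternative argument that the obstruction to coherence vanishes), your proof establishes a weaker statement than the one claimed.
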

This is well-known in the uncolored case, i.e. when $\aa$ has $a_i=1$ 
for all $1\leq i \leq m$; see e.g. \cite{MR2721032}. 

\begin{proof}
The existence of such homotopy equivalences was conjectured in~\cite{MR2746676}
and proven in the geometric setting in~\cite{MR3687104}. In the singular Soergel
bimodule setting, the braid relations follow from \cite{CKbraiding,CKL} and
Proposition \ref{prop:SH}. As in the uncolored case, these homotopy equivalences
live in 1-dimensional $\Hom$-spaces in $\KS(\SSBim)$, and canonicity amounts to
a coherent choice of scaling. The latter can be obtained from the corresponding
coherent scaling in the framework of $\glnn{N}$ foams for $N \gg 0$
that was constructed in~\cite{ETW}.
\end{proof}

\begin{conv}
If $\b = \sigma_{i_1}^{\e_1}\cdots \sigma_{i_r}^{\e_r}$, 
then we call 
$C(\beta) \oone_{\aa} = C(\sigma_{i_1}^{\e_1}\cdots \sigma_{i_r}^{\e_r}) \oone_{\aa}$ 
the \emph{Rickard complex} assigned to the colored braid $\b_{\aa}$.
\end{conv}

Rickard complexes of colored braids extend to invariants
of braided webs (using horizontal composition and external tensor product),
since they satisfy the following \emph{fork-slide} and
\emph{twist-zipper} relations

\begin{prop} \label{prop:forkslide} We have homotopy equivalences
\begin{equation}\label{eq:forkslide}
\left\llbracket
\begin{tikzpicture}[rotate=90,scale=.5,smallnodes,anchorbase]
	\draw[very thick] (.5,-1) node[right,xshift=-2pt]{$c$} to [out=90,in=270] 
		(-1,2) node[left,xshift=2pt]{$c$};
	\draw[line width=5pt,color=white] (-.5,-1) to [out=90,in=270] (.5,1);
	\draw[very thick] (-.5,-1) node[right,xshift=-2pt]{$a{+}b$} to [out=90,in=270] (.5,1);
	\draw[very thick] (.5,1) to [out=30,in=270] (1,2) node[left,xshift=2pt]{$b$};
	\draw[very thick] (.5,1) to [out=150,in=270] (0,2) node[left,xshift=2pt]{$a$};
\end{tikzpicture}
\right\rrbracket	
\simeq
\left\llbracket
\begin{tikzpicture}[rotate=90,scale=.5,smallnodes,anchorbase]
	\draw[very thick] (.5,-1) node[right,xshift=-2pt]{$c$} to [out=90,in=270] 
		(-1,2) node[left,xshift=2pt]{$c$};
	\draw[line width=5pt,color=white] (-.5,-.25) to [out=30,in=270] (1,2);
	\draw[line width=5pt,color=white] (-.5,-.25) to [out=150,in=270] (0,2);
	\draw[very thick] (-.5,-1) node[right,xshift=-2pt]{$a{+}b$} to (-.5,-.25);
	\draw[very thick] (-.5,-.25) to [out=30,in=270] (1,2) node[left,xshift=2pt]{$b$};
	\draw[very thick] (-.5,-.25) to [out=150,in=270] (0,2) node[left,xshift=2pt]{$a$};
\end{tikzpicture}
\right\rrbracket
\, , \quad
\left\llbracket
\begin{tikzpicture}[rotate=90,scale=.5,smallnodes,anchorbase,yscale=-1]
	\draw[very thick] (-.5,-1) node[left,xshift=2pt]{$b{+}c$} to [out=90,in=270] (.5,1);
	\draw[very thick] (.5,1) to [out=30,in=270] (1,2) node[right,xshift=-2pt]{$c$};
	\draw[very thick] (.5,1) to [out=150,in=270] (0,2) node[right,xshift=-2pt]{$b$};
	\draw[line width=5pt,color=white] (.5,-1) to [out=90,in=270] (-1,2);
	\draw[very thick] (.5,-1) node[left,xshift=2pt]{$a$} to [out=90,in=270] 
		(-1,2) node[right,xshift=-2pt]{$a$};
\end{tikzpicture}
\right\rrbracket	
\simeq
\left\llbracket
\begin{tikzpicture}[rotate=90,scale=.5,smallnodes,anchorbase,yscale=-1]
	\draw[very thick] (-.5,-1) node[left,xshift=2pt]{$b{+}c$} to (-.5,-.25);
	\draw[very thick] (-.5,-.25) to [out=30,in=270] (1,2) node[right,xshift=-2pt]{$c$};
	\draw[very thick] (-.5,-.25) to [out=150,in=270] (0,2) node[right,xshift=-2pt]{$b$};
	\draw[line width=5pt,color=white] (.5,-1) to [out=90,in=270] (-1,2);
	\draw[very thick] (.5,-1) node[left,xshift=2pt]{$a$} to [out=90,in=270] 
		(-1,2) node[right,xshift=-2pt]{$a$};
\end{tikzpicture}
\right\rrbracket,
\end{equation}
\begin{equation}\label{eq:twistzipper}
	\left\llbracket
\begin{tikzpicture}[rotate=90,scale=.5,smallnodes,anchorbase]
	\draw[very thick] (1,-1) node[right,xshift=-2pt]{$b$} to [out=90,in=270] (0,1)
		to [out=90,in=210] (.5,2);
	\draw[line width=5pt,color=white] (0,-1) to [out=90,in=270] (1,1);
	\draw[very thick] (0,-1) node[right,xshift=-2pt]{$a$} to [out=90,in=270] (1,1)
		to [out=90,in=330] (.5,2);
	\draw[very thick] (.5,2) to (.5,2.75);
\end{tikzpicture}
\right\rrbracket		
\simeq
\qdeg^{a b}
\left\llbracket
\begin{tikzpicture}[rotate=90,scale=.5,smallnodes,anchorbase]
	\draw[very thick] (0,1) node[right,xshift=-2pt]{$b$} to [out=90,in=210] (.5,2);
	\draw[very thick] (1,1) node[right,xshift=-2pt]{$a$} to [out=90,in=330] (.5,2);
	\draw[very thick] (.5,2) to (.5,2.75);
\end{tikzpicture}
\right\rrbracket	
\end{equation}
as well as reflections thereof.
\end{prop}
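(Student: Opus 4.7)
\medskip

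\noindent\textbf{Proof plan.} The plan is to deduce both families of equivalences from structural properties of Rickard complexes in the categorified quantum group framework, transported to $\SSBim$ via the $2$-functor $\Phi$ of Proposition~\ref{prop:SH}. The unifying observation is that the merge and split bimodules appearing on both sides of \eqref{eq:forkslide} and \eqref{eq:twistzipper} arise (up to grading shifts) as images of divided powers $\E^{(k)}$ and $\F^{(k)}$ under suitable embeddings of $\US(\glnn{2})$ or $\US(\glnn{3})$ into $\SSBim$, so that the desired relations become coherence statements for the categorical braid group action built from these generators.

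For the fork-slide \eqref{eq:forkslide}, I would first reduce to the case where only the three colors $a,b,c$ occur and work inside $\US(\glnn{3})$. Both sides then represent $1$-morphisms built from the Rickard braiding composed with a merge or split, and the equivalence is exactly the content of the compatibility of the Rickard braiding with divided powers; this is the same mechanism underlying the braid relations invoked in Proposition~\ref{prop:rickard invariance}, and can be extracted either from the categorified $\slnn{2}$-theoretic work of Chuang--Rouquier~\cite{MR2373155} and Cautis--Kamnitzer--Licata~\cite{CKL}, or from the $\glnn{N}$ foam framework of \cite{QR, ETW}. A hands-on verification would construct the equivalence by resolving $C_{c,a+b}\hComp(S\boxtimes\oone_c)$ as a two-step Rickard complex $C_{c,a}\hComp C_{c,b}$ (after suitable rebracketing via the split $S$), matching both sides up to canonical homotopy.

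For the twist-zipper \eqref{eq:twistzipper}, the strategy is a direct computation. Using the identification $C_{a,b}^k \cong \F^{(a-k)} \E^{(b-k)} \oone_{(a,b)}$ from Definition~\ref{def:Rickardcx}, I would horizontally compose the merge $_{(a+b)}M_{(b,a)}$ with each term of the Rickard complex. The key fact is that the merge absorbs $(\E,\F)$-ladders: the composition factors through the extremal weight $(a+b,0)$, and categorified $\slnn{2}$ relations force all but the top summand at $k=\min(a,b)$ to be contractible. Evaluating the resulting bubble decorations via \eqref{eq:bubble}--\eqref{eq:thickbubble} and tracking the grading shifts from \eqref{eq:mergeandsplit} produces a single surviving term isomorphic to $\qdeg^{ab}\,{}_{(a+b)}M_{(a,b)}$; the differentials of $C_{a,b}$ between neighboring terms supply the null-homotopies that kill the remaining summands.

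The main obstacle will be the canonical-scaling bookkeeping: the homotopy equivalences above are only well-defined up to invertible scalars, and one must verify that these scalars are compatible with the scaling fixed in Proposition~\ref{prop:rickard invariance}, which ultimately rests on the coherence of the $\glnn{N}$ foam framework of \cite{ETW}. A related subtlety is that objects of $\SSBim$ carry tuples with strictly positive entries, whereas the cleanest $\slnn{2}$-argument for the twist-zipper naturally passes through the extremal weight $(a+b,0)$; this can be circumvented either by working in $\USd(\glnn{3})$ with an auxiliary entry that is monoidally discarded, or by arguing directly in the foam calculus of Appendix~\ref{s:FoamSSBim}.
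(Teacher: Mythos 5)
The paper's own ``proof'' of this proposition is a one-line citation: it points to \cite[(4.3) and (4.16)]{QR} for the fork-slide moves and to \cite[Lemma 5.2]{Cautis} for the twist-zipper. Your plan is therefore substantially more elaborate than the paper's argument, but it is essentially an attempt to sketch what those references prove.

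Your plan correctly identifies the underlying mechanisms. The fork-slide is indeed a compatibility of the Rickard braiding with merges and splits, and the right place to extract it is the categorical skew Howe / foam literature (this is exactly what \cite[(4.3), (4.16)]{QR} are). The twist-zipper is indeed a collapse of a Rickard complex after composing with a merge, and your observation that all but one term contracts is the content of \cite[Lemma 5.2]{Cautis}. You also correctly flag the two real subtleties: the coherence of scalars (handled in \cite{ETW}, as the paper notes in the proof of Proposition~\ref{prop:rickard invariance}) and the awkwardness of the extremal $\glnn{2}$-weight $(a+b,0)$ versus $\SSBim$-tuples with strictly positive entries.

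Where your plan falls short of being a proof: the ``hands-on verification'' you propose for the fork-slide --- resolving $C_{c,a+b}\hComp(S\boxtimes\oone_c)$ as $C_{c,a}\hComp C_{c,b}$ ``after suitable rebracketing via the split'' --- is the fork-slide relation restated, not a proof of it; without some independent input (e.g.\ the explicit foam homotopies of \cite{QR} or the categorical $\glnn{3}$ action) the argument is circular. Likewise, the twist-zipper sketch asserts that ``the differentials of $C_{a,b}$ between neighboring terms supply the null-homotopies that kill the remaining summands,'' but Gaussian elimination requires certain components of the differential to be isomorphisms, and verifying this requires the explicit morphism-space computations of \cite{KLMS} (compare Corollary~\ref{cor:Choms} and Corollary~\ref{cor:HomForSR} in the appendix). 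The paper sidesteps all of this by simply citing where the work has been done; you should do the same rather than gesture at an incomplete reconstruction.
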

\begin{proof} See \cite[(4.3) and (4.16)]{QR} and \cite[Lemma 5.2]{Cautis}. 
\end{proof}

\subsection{Shifted Rickard complexes}
\label{ss:shifted rickards}
We now define the \emph{shifted Rickard complexes},
which previously appeared in \cite[equations (12) and (13)]{Cautis} 
in the setting of the categorified quantum group $\USd(\slnn{2})$.
In passing to $\SSBim$, we show that these 
complexes possess a topological interpretation.

\begin{definition}\label{def:shifted rickards}
Fix integers $a,b,c,d$ with $a+b=c+d$, and consider the complex
\begin{align*}
{}_{(c,d)}C_{(a,b)} 
:=& 
\left( \begin{tikzpicture}[smallnodes,rotate=90,baseline=.1em,scale=.66]
\draw[very thick] (0,.25) to [out=150,in=270] (-.25,1) node[left,xshift=2pt]{$c$};
	\draw[very thick] (.5,.5) to (.5,1) node[left,xshift=2pt]{$d$};
\draw[very thick] (0,.25) to  (.5,.5);
\draw[very thick] (0,-.25) to (0,.25);
\draw[dotted] (.5,-.5) to [out=30,in=330] node[above=-2pt]{$0$} (.5,.5);
\draw[very thick] (0,-.25) to  (.5,-.5);
\draw[very thick] (.5,-1) node[right,xshift=-2pt]{$b$} to (.5,-.5);
\draw[very thick] (-.25,-1)node[right,xshift=-2pt]{$a$} to [out=90,in=210] (0,-.25);
\end{tikzpicture}
\xrightarrow{\; \chi_0^+}
\qdeg^{-(a-d+1)} \tdeg
\begin{tikzpicture}[smallnodes,rotate=90,baseline=.1em,scale=.66]
\draw[very thick] (0,.25) to [out=150,in=270] (-.25,1) node[left,xshift=2pt]{$c$};
\draw[very thick] (.5,.5) to (.5,1) node[left,xshift=2pt]{$d$};
\draw[very thick] (0,.25) to  (.5,.5);
\draw[very thick] (0,-.25) to (0,.25);
\draw[very thick] (.5,-.5) to [out=30,in=330] node[above=-2pt]{$1$} (.5,.5);
\draw[very thick] (0,-.25) to  (.5,-.5);
\draw[very thick] (.5,-1) node[right,xshift=-2pt]{$b$} to (.5,-.5);
\draw[very thick] (-.25,-1)node[right,xshift=-2pt]{$a$} to [out=90,in=210] (0,-.25);
\end{tikzpicture}
\xrightarrow{\; \chi_0^+}
\qdeg^{-2(a-d+1)} \tdeg^2
\begin{tikzpicture}[smallnodes,rotate=90,baseline=.1em,scale=.66]
\draw[very thick] (0,.25) to [out=150,in=270] (-.25,1) node[left,xshift=2pt]{$c$};
\draw[very thick] (.5,.5) to (.5,1) node[left,xshift=2pt]{$d$};
\draw[very thick] (0,.25) to  (.5,.5);
\draw[very thick] (0,-.25) to (0,.25);
\draw[very thick] (.5,-.5) to [out=30,in=330] node[above=-2pt]{$2$} (.5,.5);
\draw[very thick] (0,-.25) to  (.5,-.5);
\draw[very thick] (.5,-1) node[right,xshift=-2pt]{$b$} to (.5,-.5);
\draw[very thick] (-.25,-1)node[right,xshift=-2pt]{$a$} to [out=90,in=210] (0,-.25);
\end{tikzpicture}
\xrightarrow{\; \chi_0^+}
\cdots \right) \\
=& \left( \bigoplus_{k \geq 0} \qdeg^{-k(a-d+1)} \tdeg^k \, \F^{(d-k)} \E^{(b-k)} , \d_C \right)
\end{align*}
for 
\[
\d_C := \bigoplus_k \left(\chi_0^+ \colon \qdeg^{-k(a-d+1)} \tdeg^k \, \F^{(d-k)} \E^{(b-k)} 
\to \qdeg^{-(k+1)(a-d+1)} \tdeg^{k+1} \, \F^{(d-k-1)} \E^{(b-k-1)} \right) \, .
\]
We refer to ${}_{(c,d)}C_{(a,b)}$ as an \emph{$\ell$-shifted Rickard complex},
where $\ell = a-d = c-b$.
\end{definition}
The right-most term in the complex ${}_{(c,d)}C_{(a,b)}$ is either:
\[
\qdeg^{-b(a-d+1)} \tdeg^b
\begin{tikzpicture}[smallnodes,rotate=90,baseline=.1em,scale=.66]
\draw[very thick] (0,.25) to [out=150,in=270] (-.25,1) node[left,xshift=2pt]{$c$};
\draw[very thick] (.5,.5) to (.5,1) node[left,xshift=2pt]{$d$};
\draw[very thick] (0,.25) to  (.5,.5);
\draw[very thick] (0,-.25) to (0,.25);
\draw[very thick] (.5,-.5) to [out=30,in=330] node[above=-2pt]{$b$} (.5,.5);
\draw[dotted] (0,-.25) to  (.5,-.5);
\draw[very thick] (.5,-1) node[right,xshift=-2pt]{$b$} to (.5,-.5);
\draw[very thick] (-.25,-1)node[right,xshift=-2pt]{$a$} to [out=90,in=210] (0,-.25);
\end{tikzpicture} 
\quad(\text{if $b\leq d$}) \, ,\quad \text{ or } \quad 
\qdeg^{-d(a-d+1)} \tdeg^d
\begin{tikzpicture}[smallnodes,rotate=90,baseline=.1em,scale=.66]
\draw[very thick] (0,.25) to [out=150,in=270] (-.25,1) node[left,xshift=2pt]{$c$};
\draw[very thick] (.5,.5) to (.5,1) node[left,xshift=2pt]{$d$};
\draw[dotted] (0,.25) to  (.5,.5);
\draw[very thick] (0,-.25) to (0,.25);
\draw[very thick] (.5,-.5) to [out=30,in=330] node[above=-2pt]{$d$} (.5,.5);
\draw[very thick] (0,-.25) to  (.5,-.5);
\draw[very thick] (.5,-1) node[right,xshift=-2pt]{$b$} to (.5,-.5);
\draw[very thick] (-.25,-1)node[right,xshift=-2pt]{$a$} to [out=90,in=210] (0,-.25);
\end{tikzpicture}  \quad(\text{if $d\leq b$}).
\]

\begin{remark}
The usual Rickard complex is the unshifted case ${}_{(b,a)}C_{(a,b)}$.
In subsequent sections, 
we will be especially interested in the case ${}_{(a,b)}C_{(a,b)}$.
\end{remark}

Via Convention \ref{conv:Alph}, there is an algebra homomorphism
\[
\Sym(\leftX_1|\leftX_2| \rightX_1| \rightX_2) \to 
	Z(\End_{\CS(\SSBim)}({}_{(c,d)}C_{(a,b)}))
\]
for all $a,b,c,d \geq 0$. 
In the special case of the (unshifted) Rickard complex $C_{a,b} = {}_{(b,a)}C_{(a,b)}$, 
\cite[Proposition 5.7]{RW} shows that, for any symmetric function $f \in \Lambda$, 
$f(\leftX_2) \sim f(\rightX_1)$. 
Equivalently, by Lemma \ref{lemma:somerelations1}, 
the action of $h_{r+1}(\leftX_2 - \rightX_1)$ is null-homotopic for all $r \geq 0$.
We now generalize this fact to the shifted Rickard complexes.

\begin{lemma}\label{lemma:dot sliding} 
The action of $h_{a-d+r+1}(\X_2-\X_1')$ on the complex
${}_{(c,d)}C_{(a,b)}$ is null-homotopic for all $r\geq 0$.  
In particular, if $a<d$ then ${}_{(c,d)}C_{(a,b)}\simeq 0$.
\end{lemma}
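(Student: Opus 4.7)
The plan is to construct, for each $r \geq 0$, an explicit null-homotopy $\psi_r$ realizing $h_{a-d+r+1}(\X_2-\X_1')\cdot \Id$ as a boundary in $\End_{\CS(\SSBim)}({}_{(c,d)}C_{(a,b)})$, and then deduce the vanishing statement for $a<d$ as an immediate corollary. The shifted Rickard complex is built from the $\USd(\glnn{2})$ $1$-morphisms $\F^{(d-k)}\E^{(b-k)}\oone_{a,b}$, so every step takes place inside the extended graphical calculus of Section~\ref{sec:CQG}.

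The first step is to reduce $h_s(\X_2-\X_1')$ to a sum of "local" decorations. Using Lemma~\ref{lem:PHEonsum}, we have $H(\X_2-\X_1',t) = H(\X_2,t)E(\X_1',-t)$, so
\[
h_s(\X_2-\X_1') \;=\; \sum_{i+j=s}(-1)^j h_i(\X_2)\,e_j(\X_1').
\]
On each term $\F^{(d-k)}\E^{(b-k)}\oone_{a,b}$, the factor $h_i(\X_2)$ (resp. $e_j(\X_1')$) acts as a decoration on the outgoing edge of label $d$ (resp. incoming edge of label $a$), in the sense of Remark~\ref{rem:Decoration}.

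Next I would build $\psi_r$ of bidegree $(2(a-d+r+1),-1)$ by declaring its component at homological position $k\geq 1$ to be a suitable linear combination of the dotted $\chi_s^-$-maps from $\F^{(d-k)}\E^{(b-k)}\oone_{a,b}$ to $\F^{(d-k+1)}\E^{(b-k+1)}\oone_{a,b}$, possibly pre/post-composed with further symmetric function decorations on the outer edges. The coefficients are forced by the expansion in Step~1: one wants $[\d_C,\psi_r]$, computed via the dot- and bubble-slide relations \eqref{eq:chi plus}--\eqref{eq:thickbubble}, to reassemble through the generating function identity into $h_{a-d+r+1}(\X_2-\X_1')\cdot \Id$. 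The shift $a-d+1$ in the indexing appears naturally: $\chi_r^\pm$ carries intrinsic $q$-degree $1+2r+(a-d)$ at the relevant weight, and combining this with the grading shift $\qdeg^{-k(a-d+1)}\tdeg^k$ of the complex forces the symmetric function produced by $[\d_C,\psi_r]$ to be of homogeneous degree $a-d+r+1$. This is precisely the shifted analogue of the unshifted statement \cite[Prop.~5.7]{RW}; the verification $[\d_C,\psi_r]=h_{a-d+r+1}(\X_2-\X_1')\cdot\Id$ then reduces to identifying the counterclockwise bubble that appears after sliding dots through caps with its evaluation $h_\bullet(\B-\Fr)$ from \eqref{eq:bubble}, combined with Lemma~\ref{lemma:somerelations1} to recognize the resulting sum. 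The main obstacle will be the careful bookkeeping of the Koszul signs and coefficients (originating in the convention of \eqref{eq:chi plus}--\eqref{eq:chi minus}) so that the seemingly complicated alternating sum collapses to a single symmetric function.

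Finally, the second assertion follows at once: if $a<d$, then $r:=d-a-1\geq 0$ satisfies $a-d+r+1=0$, and $h_0(\X_2-\X_1')=1$. Thus the identity endomorphism $\Id_{{}_{(c,d)}C_{(a,b)}}$ is null-homotopic, which is precisely the statement that ${}_{(c,d)}C_{(a,b)}\simeq 0$.
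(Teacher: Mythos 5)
Your high-level strategy matches the paper's: build a null-homotopy out of dotted $\chi^-$-maps, compute the commutator via a square-flop relation, evaluate the resulting bubble, and recognize the answer as a symmetric function on a difference of alphabets; your deduction of the case $a<d$ from the $r=d-a-1$ case is also exactly right. However, there is a genuine gap at the heart of the proposal, and two of your supporting details point in the wrong direction.

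The gap is that you never identify the homotopy: you write that its component at degree $k$ should be ``a suitable linear combination of the dotted $\chi_s^-$-maps, possibly pre/post-composed with further symmetric function decorations on the outer edges,'' and you flag pinning this down as ``the main obstacle.'' In fact no linear combination and no outer-edge decoration is needed: the $k$-th component of the paper's homotopy is the single map $(-1)^{a-d+k}\chi_r^-$ and nothing more. Correspondingly, your Step~1 decomposition $h_s(\X_2-\X_1')=\sum_{i+j=s}(-1)^j h_i(\X_2)\,e_j(\X_1')$ into outer-edge decorations is not what the calculation produces; the square-flop relation applied to $\chi_0^+\chi_r^- \mp \chi_r^-\chi_0^+$ automatically yields $\sum_{p+q+s=a-d+r+1} h_p(\leftM)\,h_s(\B-\Fr)\,h_q(\rightM)$, a sum of decorations on the \emph{internal} rung alphabets $\leftM,\rightM$ together with a bubble evaluating, by \eqref{eq:bubble}, to $h_s(\B-\Fr)$. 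This collapses to $h_{a-d+r+1}(\leftX_2-\rightX_1)$ by multiplicativity of the generating function $H(-,t)$ (Lemma~\ref{lem:PHEonsum}) combined with the edge relations $\leftX_2=\leftM+\B$, $\rightX_1=\Fr-\rightM$; Lemma~\ref{lemma:somerelations1}, which you cite for the final collapse, plays no role. Once the homotopy is written down, the alternating signs $(-1)^{a-d+k}$ you were worried about simply put the commutator in the form the square-flop relation expects, and there is no complicated alternating sum to chase: the rest is a one-line generating-function identity.
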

\begin{proof}
Consider the homotopies
$\Theta_{r+1} \in \End_{\CS(\SSBim)} \big( {}_{(c,d)}C_{(a,b)} \big)$
that are given as the direct sum of the maps
\[
(-1)^{a-d+k} \chi_r^{-} \colon 
\qdeg^{-k(a-d+1)} \tdeg^k \, \F^{(d-k)} \E^{(b-k)} \to 
\qdeg^{(1-k)(a-d+1)} \tdeg^{k-1} \, \F^{(d-k+1)} \E^{(b-k+1)} \, .
\]
Note that $\wt(\Theta_{r+1}) = \qdeg^{2(a-d+r+1)}\tdeg^{-1}$.
The component of $[\d_C,\Theta_{r+1}]$ in $\tdeg$-degree $k$ is
\begin{align*}
(-1)^{a-d+k} \chi^+_0\circ \chi^-_{r} + (-1)^{a-d+k+1} \chi^-_{r}\circ \chi^+_0
&=
\begin{tikzpicture}[anchorbase,smallnodes]
	\draw[CQG,ultra thick,<-] (0,-.75) node[below=-2pt]{$d{-}k$} to (0,.75);
	\draw[CQG,ultra thick,->] (.75,-.75) node[below=-2pt]{$b{-}k$} to (.75,.75);
	\draw[CQG,thick, directed=.55] (.75,.125) to [out=90,in=90] (0,.125);
	\draw[CQG,thick, directed=.75] (0,-.125) to [out=270,in=270] 
		node[black,yshift=-1pt]{\normalsize$\bullet$} node[black,below]{$r$} (.75,-.125);
	\node at (1.25,.5) {$(a,b)$};
\end{tikzpicture} \!\!\!
+
\begin{tikzpicture}[anchorbase,smallnodes]
	\draw[CQG,ultra thick,<-] (0,-.75) node[below=-2pt]{$d{-}k$} to (0,.75);
	\draw[CQG,ultra thick,->] (.75,-.75) node[below=-2pt]{$b{-}k$} to (.75,.75);
	\draw[CQG,thick, directed=.55] (.75,-.5) to [out=90,in=90] (0,-.5);
	\draw[CQG,thick, directed=.75] (0,.5) to [out=270,in=270] 
		node[black,yshift=-1pt]{\normalsize$\bullet$} node[below,black]{$r$} (.75,.5);
	\node at (1.25,.5) {$(a,b)$};
\end{tikzpicture} \\
&=
\sum_{\substack{p+q+s= \\ a-d+r+1}}
\begin{tikzpicture}[anchorbase,smallnodes]
	\draw[CQG,ultra thick,<-] (0,-.75) node[below=-2pt]{$d{-}k$} to node[black]{$\CQGbox{h_p}$} (0,.75);
	\draw[CQG,ultra thick,->] (1.5,-.75) node[below=-2pt]{$b{-}k$} to 
		node[black]{$\CQGbox{h_q}$} (1.5,.75);
	\node at (2.25,.5) {$(a,b)$};
	\draw[CQG,thick,directed=.25] (1,0) arc (0:361:.25) 
		node[black,pos=.75]{$\bullet$} node[below,black,pos=.75]{\scriptsize$\spadesuit{+}s$};
\end{tikzpicture} \, .
\end{align*}
Here we have used (a reflection of) the ``square flop'' relation in \cite[Lemma 4.6.4]{KLMS}. 
By \eqref{eq:bubble}, the bubble on the right-hand side above is equal to the
endomorphism $h_s(\B-\Fr)$; here we use Convention \ref{conv:Alph}.
The result now follows since this gives
\begin{align*}
(\chi^+_0\circ (-1)^{a-d+k} \chi^-_{r-1} + (-1)^{a-d+k+1}\chi^-_{r-1}\circ 
\chi^+_0)|_{\F^{(d-k)} \E^{(b-k)}}
&= \sum_{\substack{p+q+s= \\ a-d+r+1}} h_p(\leftM)h_s(\B-\Fr)h_q(\rightM) \\
&= h_{a-d+r+1}((\leftM + \B) - (\Fr - \rightM)) \\
&= h_{a-d+r+1}(\leftX_2 - \rightX_1) \, . \qedhere
\end{align*}
\end{proof}

We now arrive at the topological interpretation of ${}_{(b,a)}C_{(a,b)}$.

\begin{proposition}\label{prop:topological shifted rickard}
For all integers $a, b,c,d\geq 0$ with $a+b=c+d$ we have a homotopy equivalence 
\[
{}_{(c,d)}C_{(a,b)} 
\simeq 
\left\llbracket
\begin{tikzpicture}[scale=.4,smallnodes,anchorbase,rotate=270]
\draw[very thick] (1,-1) to [out=150,in=270] (0,1) to (0,2) node[right=-2pt]{$b$}; 
\draw[line width=5pt,color=white] (0,-2) to (0,-1) to [out=90,in=210] (1,1);
\draw[very thick] (0,-2) node[left=-2pt]{$d$} to (0,-1) to [out=90,in=210] (1,1);
\draw[very thick] (1,1) to (1,2) node[right=-2pt]{$a$};
\draw[very thick] (1,-2) node[left=-2pt]{$c$} to (1,-1); 
\draw[very thick] (1,-1) to [out=30,in=330] node[below=-1pt]{$a{-}d$} (1,1); 
\end{tikzpicture}
\right\rrbracket \, .
\]
This remains valid even when $a<d$, 
provided we interpret the right-hand side as zero.
\end{proposition}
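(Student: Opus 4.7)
The plan is to prove the homotopy equivalence by expressing both sides as complexes of singular Bott--Samelson bimodules with matching chain groups, shifts, and differentials. The argument splits naturally into the degenerate case and the main case.

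First I would dispose of the case $a<d$: the right-hand side vanishes because a web with a rung of width $a-d<0$ is not defined, while for the left-hand side I would apply Lemma~\ref{lemma:dot sliding} with $r = d-a-1 \geq 0$. Since $h_{a-d+r+1}(\X_2-\X_1') = h_0 = 1$, this shows the identity endomorphism of ${}_{(c,d)}C_{(a,b)}$ is null-homotopic, forcing ${}_{(c,d)}C_{(a,b)} \simeq 0$.

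For the main case $a\geq d$, I would realize the Rickard complex of the braided web as a horizontal composition. Reading the braided web, it is built from a split vertex producing an $(a-d)$-rung from the $c$-labeled strand, a $2$-strand Rickard crossing between the $b$-colored and $d$-colored strands, and a merge vertex recombining the $(a-d)$-rung with $d$ to produce $a$. By the definition of Rickard complexes of braided webs via horizontal composition (see \eqref{eq:Rick-hComp} and Proposition~\ref{prop:rickard invariance}), and since $\hComp$ and $\boxtimes$ are dg-functors on $\CS(\SSBim)$, the $k$-th chain group of the right-hand side is isomorphic to
\[
q^{-k}\tdeg^k\cdot\big( [\text{merge}] \hComp \bigl(\F^{(b-k)}\E^{(d-k)}\oone_{(b,d)} \boxtimes \oone_{(a-d)}\bigr) \hComp [\text{split}] \big)
\]
with differential induced by $\chi_0^+$ on the crossing's Rickard complex.

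Next I would show that the above bimodule simplifies to $q^{-k(a-d+1)}\tdeg^k \F^{(d-k)}\E^{(b-k)}\oone_{(a,b)}$, matching the $k$-th chain group of ${}_{(c,d)}C_{(a,b)}$. This is a web computation using the description of singular Bott--Samelson bimodules from Lemma~\ref{lem:edge}: attaching the $(a-d)$-rung to the right column of the ladder web underlying $\F^{(b-k)}\E^{(d-k)}\oone_{(b,d)}$ via the merge at the top and split at the bottom yields, after invoking associativity of iterated merges/splits (the standard MOY web isomorphisms), exactly the ladder web underlying $\F^{(d-k)}\E^{(b-k)}\oone_{(a,b)}$. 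Finally, the differentials match up: the differential $\chi_0^+$ lifted under the $2$-functor $\Phi$ from $\USd(\glnn{2})$ is natural with respect to horizontal composition with external $1$-morphisms, so the differential of the braided web's complex corresponds to $\chi_0^+$ on ${}_{(c,d)}C_{(a,b)}$ under the isomorphism above.

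The main obstacle is the careful bookkeeping of grading shifts in the third step. The bare Rickard crossing contributes $q^{-k}$ per level, but the shifted complex requires $q^{-k(a-d+1)}$; the extra factor of $q^{-k(a-d)}$ must be accounted for by the intrinsic shifts $q^{\ell(\aa)-\ell(\bb)}$ in the definition \eqref{eq:mergeandsplit} of the merge/split bimodules, accumulated across the webs at each level of the complex. Since the combinatorics of $\ell(\aa)$ changes with $k$ (as the width of the middle vertical edges varies), tracking this degree shift precisely across all $k$ is the most delicate part of the argument.
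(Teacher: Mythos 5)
Your handling of the $a<d$ case coincides exactly with the paper's: both invoke Lemma~\ref{lemma:dot sliding} with $r=d-a-1$ so that the identity $h_0=1$ is null-homotopic. For $a\geq d$ your route is genuinely different from the paper's, which applies Reidemeister/fork-slide/twist-zipper moves to rewrite the braided web as $\qdeg^{-b(a-d)}\E^{(a-d)}\hComp C_{a,b}$ and then invokes Lemma~\ref{lem:Cautis+}, an inductive argument about $\E^{(\ell)}\hComp C_{a,b}$.

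However, your $a\geq d$ argument has a genuine gap. You propose to show the $k$-th chain group of the braided web complex, namely (up to shift) the bimodule
\[
{}_{(c,d)}M_{(a-d,b,d)}\hComp\bigl(\oone_{a-d}\boxtimes C_{d,b}^{\,k}\bigr)\hComp{}_{(a-d,d,b)}S_{(a,b)},
\]
is isomorphic to the corresponding chain group $\qdeg^{-k(a-d+1)}\tdeg^k\F^{(d-k)}\E^{(b-k)}\oone_{(a,b)}$ of ${}_{(c,d)}C_{(a,b)}$ via ``associativity of iterated merges/splits.'' This is false: the split of $a$ on the source side and the merge of $(a-d,b)$ on the target side cannot be absorbed into the internal ladder by associativity moves alone. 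After re-associating, one is left with a split-then-merge configuration whose bimodule is $R^{J_1}\otimes_{R^J}R^{J_2}$ for certain parabolics $J_1,J_2\subset J=\mathfrak{S}_{a+b-k}$, and the natural map onto $R^{J_1\cap J_2}$ is an isomorphism only when $J_1J_2=J$ — which fails generically (it requires one of $a-d$, $b-k$, $a$, $d+b-k$ to vanish). Concretely, already for $a=c=2$, $b=d=1$, $k=1$, the $k=1$ chain group of the braided web complex computes to $q^{-1}\tdeg\cdot q^{-1}R^{(1,1,1)}\cong q^{-2}\tdeg\,\oone_{(2,1)}\oplus \tdeg\,\oone_{(2,1)}$ as an $(R^{(2,1)},R^{(2,1)})$-bimodule, whereas the $k=1$ chain group of ${}_{(2,1)}C_{(2,1)}$ is only $q^{-2}\tdeg\,\oone_{(2,1)}$. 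The extra summands cancel against extra summands in neighboring chain groups after Gaussian elimination — that cancellation is exactly what the paper's Reidemeister moves and Lemma~\ref{lem:Cautis+} provide — but no chain-group isomorphism exists. The degree shift bookkeeping you flagged as the main obstacle is in fact not where the difficulty lies: on the summand that does survive, the $q^{-k(a-d+1)}$ shift works out. The obstruction is the presence of additional direct summands, which your plan does not account for. (Separately, the crossing in the braided web is $C_{d,b}$ with chain group $\F^{(d-k)}\E^{(b-k)}\oone_{(d,b)}$ rather than $\F^{(b-k)}\E^{(d-k)}\oone_{(b,d)}$ as written, but this is a transcription issue and not the source of the gap.)
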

\begin{proof}
If $a<d$, then contractibility of ${}_{(c,d)}C_{(a,b)}$ was established in Lemma
\ref{lemma:dot sliding}.  If $a\geq d$, then using Reidemeister II, fork-sliding
\eqref{eq:forkslide}, and twist-zipper \eqref{eq:twistzipper} moves, we have
\[ 
\left\llbracket
\begin{tikzpicture}[scale=.4,smallnodes,anchorbase,rotate=270]
\draw[very thick] (1,-1) to [out=150,in=270] (0,1) to (0,2) node[right=-2pt]{$b$}; 
\draw[line width=5pt,color=white] (0,-2) to (0,-1) to [out=90,in=210] (1,1);
\draw[very thick] (0,-2) node[left=-2pt]{$d$} to (0,-1) to [out=90,in=210] (1,1);
\draw[very thick] (1,1) to (1,2) node[right=-2pt]{$a$};
\draw[very thick] (1,-2) node[left=-2pt]{$c$} to (1,-1); 
\draw[very thick] (1,-1) to [out=30,in=330] node[below=-1pt]{$a{-}d$} (1,1); 
\end{tikzpicture}
\right\rrbracket
\simeq
\qdeg^{-b(a-d)}
\left\llbracket
\begin{tikzpicture}[scale=.5,smallnodes,anchorbase,rotate=270]
\draw[very thick] (1,-2) node[left]{$c$} to (1,0) \pu (0,2) node[right]{$b$};
\draw[line width=5pt,color=white] (0,0) \pu (1,2);
\draw[very thick] (0,-2) node[left]{$d$} to (0,0) \pu (1,2) node[right]{$a$};
\draw[very thick] (1,-1.5) to (.5,-1)node[right,xshift=-2pt,yshift=-1pt]{$a{-}d$} to  (0,-.5);
\end{tikzpicture}
\right\rrbracket =  \qdeg^{-b(a-d)} \E^{(a-d)}\hComp C_{a,b}.
\] 
The homotopy equivalence 
$
\E^{(a-d)}\hComp C_{a,b} \simeq  q^{b(a-d)} {}_{(c,d)}C_{(a,b)}
$
is proved in Lemma \ref{lem:Cautis+} below.
\end{proof}

\begin{lem}
\label{lem:Cautis+}
We have
\[
\E^{(\ell)}\hComp C_{a,b} \simeq  q^{b\ell} {}_{(b+\ell,a-\ell)}C_{(a,b)}
\]
for all integers $a,b,\ell\geq 0$. 
\end{lem}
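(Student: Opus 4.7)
The plan is to proceed by induction on $\ell \geq 0$. The base case $\ell = 0$ is immediate: by definition ${}_{(b,a)}C_{(a,b)} = C_{a,b}$ and $\E^{(0)} = \oone$. For the inductive step, I would combine the categorified $\slnn{2}$ relation $\E \hComp \E^{(\ell-1)} \cong \bigoplus_{[\ell]} \E^{(\ell)}$ (in the image of the $2$-functor $\Phi$ of Proposition~\ref{prop:SH}) with the inductive hypothesis $\E^{(\ell-1)} \hComp C_{a,b} \simeq q^{b(\ell-1)}\, {}_{(b+\ell-1,\, a-\ell+1)}C_{(a,b)}$. This reduces the problem to exhibiting a homotopy equivalence
\[
\E \hComp {}_{(b+\ell-1,\, a-\ell+1)}C_{(a,b)} \;\simeq\; \bigoplus\nolimits_{[\ell]} q^{b} \cdot {}_{(b+\ell,\, a-\ell)}C_{(a,b)},
\]
after which the desired statement follows by isolating the direct summand corresponding to $\E^{(\ell)} \subset \E \hComp \E^{(\ell-1)}$.

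To prove this equivalence I would decompose each term of the shifted Rickard complex on the left via the $\slnn{2}$ relation
\[
\E \hComp \F^{(a-\ell+1-k)}\oone_{(a+b-k,\,k)} \;\cong\; \F^{(a-\ell+1-k)}\E\, \oone_{(a+b-k,\,k)} \;\oplus\; \bigoplus\nolimits_{[b-k+\ell]} \F^{(a-\ell-k)}\oone_{(a+b-k,\,k)},
\]
then further combine $\E \hComp \E^{(b-k)} \cong \bigoplus\nolimits_{[b-k+1]} \E^{(b-k+1)}$ on the right. The differential $\id_{\E} \hComp \chi_0^+$ decomposes as a block matrix with respect to this direct sum, and the task is to verify that the blocks mixing the ``first summand'' terms $\F^{(a-\ell+1-k)}\E^{(b-k+1)}$ at adjacent homological degrees are isomorphisms, permitting Gaussian elimination, while the blocks among the ``second summand'' terms $\F^{(a-\ell-k)}\E^{(b-k)}$ assemble into (a scalar multiple of) $\chi_0^+$ with the correct internal degree shift.

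The main obstacle is this explicit computation of the differential matrix in the $\slnn{2}$ decomposition. It can be carried out in the KLMS extended graphical calculus of~\cite{KLMS}, invoking the foam-to-bimodule dictionary of Appendix~\ref{s:FoamSSBim} and identities analogous to the ``square flop'' used in the proof of Lemma~\ref{lemma:dot sliding}. The key intermediate claim is that the relevant off-diagonal component is a unit scalar multiple of the identity on $\F^{(a-\ell+1-k)}\E^{(b-k)}$ in the appropriate internal degree, yielding the desired Gaussian elimination, while the diagonal component on the retained second summand is precisely (a scalar multiple of) the $\chi_0^+$ map of the target shifted Rickard complex.
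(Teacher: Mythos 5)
Your inductive skeleton — induction on $\ell$, using $\E\hComp\E^{(\ell-1)}\cong\bigoplus_{[\ell]}\E^{(\ell)}$ together with a Krull--Schmidt cancellation, and the $\slnn{2}$ decomposition of the chain groups followed by Gaussian elimination — matches the strategy in the paper. The gap is in the last step, which is exactly where the argument is delicate.

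After Gaussian elimination you are left with a complex with the right underlying bigraded object (up to the $[\ell]$ multiplicity), but with some a priori unknown differential. You assert that one can verify ``by explicit computation'' that this differential is (a scalar multiple of) $\chi_0^+$ in the appropriate degree, but you give no reason why the scalar is nonzero. That nonvanishing is the crux of the lemma: if the scalar were zero in some homological degree, the complex would split as a direct sum and fail to be a shifted Rickard complex. The paper sidesteps this computation entirely. After reducing to a complex $N$ of the correct shape, it uses the inductive hypothesis together with Krull--Schmidt to identify $N\simeq\qdeg^{b\ell}\,\E^{(\ell+1)}C_{a,b}$, which is indecomposable because $\E^{(\ell+1)}$ is indecomposable and $C_{a,b}$ is invertible. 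Indecomposability forces every differential in $N$ to be nonzero, and the one-dimensionality of the relevant degree-zero $\Hom$-space (Corollary~\ref{cor:Choms}) then pins the differential down as $\chi_0^+$ up to a unit, giving $N\simeq\qdeg^b\,{}_{(c+1,d-1)}C_{(a,b)}$. This is a genuine simplification: it replaces a foam computation through the full Gaussian elimination by a structural argument, and it is what guarantees the unit scalar that your proposal leaves hanging.

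There is also a secondary, but real, bookkeeping issue. Your coarse decomposition
$\E\,\F^{(d+1-k)}\E^{(b-k)}\cong [b-k+1]\,\F^{(d+1-k)}\E^{(b-k+1)}\oplus [b-k+\ell]\,\F^{(d-k)}\E^{(b-k)}$
does not by itself present a contractible subcomplex: the ``first summand'' at $\tdeg^k$ and the ``second summand'' at $\tdeg^{k-1}$ are the same bimodule type but carry different graded multiplicities, $[b-k+1]$ vs.\ $[b-k+1+\ell]$, so they cannot be cancelled against each other wholesale. The paper's proof refines the decomposition using a quantum integer identity (of the form $[b-k+1+\ell] = q^{b-k}[1+\ell] + q^{-(1+\ell)}[b-k]$) to peel off the exact multiplicity that cancels, identify the contractible subcomplex, and leave behind $[\ell+1]$ copies of the correct shape. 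Without a splitting of this kind, your Gaussian elimination is not actually set up, nor is it clear that the residual complex has the $[\ell]$ multiplicity your reduction requires. The paper also invokes a separate ``trick'' (from Cautis) to show these $[\ell+1]$ copies split off cleanly as $[\ell+1]\cdot N$, another step you would need to supply. Finally, a small but necessary observation in the paper is that $X_{-1}=0$ because $\E^{(b+1)}\oone_{a,b}=0$ in $\SSBim$; this is the one place the argument uses that we are not in a general integrable $\USd(\slnn{2})$-representation, and it is worth flagging.
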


If $a\leq b$ or $a \geq b+\ell$, this follows from \cite[Proposition 4.5]{Cautis}. 
In our setting of $\SSBim$ 
(as opposed to the setting of an arbitrary integrable $\USd(\slnn{2})$ 
representation from \cite{Cautis}), 
the proof strategy of \cite[Proposition 4.5]{Cautis} carries over 
to give a uniform proof with no assumptions other than $a,b,\ell\geq 0$.
Note that exactly one (additional) step here 
(the observation that $X_{-1} = 0$ below) 
uses that we are working in $\SSBim$.

\begin{proof}
We proceed by induction on $\ell$. 
The case $\ell=0$ case holds trivially.
Thus, suppose we have established the result for some fixed $\ell \geq 0$. 
Set $c:=b+\ell$ and $d:=a-\ell$, so $\ell = a-d=c-b$.
We begin by computing $\E\hComp  {}_{(c,d)}C_{(a,b)}$ on the level of chain groups.
Note that
\[
{}_{(c,d)}C_{(a,b)} = \bigoplus_{k \geq 0}
	\qdeg^{-k(\ell+1)} \tdeg^{k} \, \F^{(d-k)}\E^{(b-k)}\oone_{a,b}
\]
where we interpret $\F^{(m)} = 0 = \E^{(m)}$ when $m <0$.
For $k \geq 0$, we thus have
\begin{align*}
\qdeg^{-k(\ell+1)} \, \E \F^{(d-k)}\E^{(b-k)}\oone_{a,b}
&\cong \qdeg^{-k(\ell+1)}\left( \F^{(d-k)} \E\E^{(b-k)}\oone_{a,b} 
	\oplus  [b-k+\ell+1] \, \F^{(d-k-1)}\E^{(b-k)}\right) \\
&\cong \qdeg^{-k(\ell+1)}\left([b-k+1] \, \F^{(d-k)} \E^{(b-k+1)}\oone_{a,b}
	\oplus[b-k+1+\ell] \, \F^{(d-k-1)}\E^{(b-k)}\right)\\
 &\cong X_{k-1}\oplus X_{k} \oplus Y_k
\end{align*}
where\footnote{Here we use the quantum integer identity
$[b-k+1+\ell] = q^{b-k}[1+\ell] + q^{-(1+\ell)}[b-k]$.} 
we set
\[
X_k := \qdeg^{-(k+1)(1+\ell)}[b-k] \, \F^{(d-k-1)} \E^{(b-k)} \, , \quad 
Y_k:=\qdeg^{b-k(2+\ell)}[1+\ell] \, \F^{(d-k-1)} \E^{(b-k)}\oone_{a,b} \, .
\]
Note that $X_{-1}=0$ since $\E^{(b+1)}\oone_{a,b}=0$.
We thus have an isomorphism
\[
\E \hComp  {}_{(c,d)}C_{(a,b)} \cong \bigoplus_{k\geq 0} \tdeg^k  (X_{k-1}\oplus X_{k} \oplus Y_k) =: M
\]
for some differential $\d_M$ on $M$.

Applying the $p=0,1,2$ cases of Corollary \ref{cor:HomForSR}, 
we find that the components 
$\d_M \colon X_{k-1}\oplus X_{k} \oplus Y_k \to X_{k}\oplus X_{k+1} \oplus Y_{k+1}$
take the form
\[
\begin{pmatrix}
\ast & \phi & \ast \\
\ast & \ast & \ast \\
0 & 0 & \ast
\end{pmatrix}
\]
where $\phi$ is upper triangular 
with multiples of the identity on the diagonal. 
The zeros in the bottom left tell us that
\[
\bigoplus_{k\geq 0}\tdeg^k(X_{k-1}\oplus X_{k})
\]
is a subcomplex of $M$, with differential 
$\left(\begin{smallmatrix}\ast &\phi \\ \ast&\ast\end{smallmatrix}\right)$.   
Moreover, an explicit computation (e.g. using the extended graphical calculus
from \cite{KLMS}) shows that the diagonal entries of $\phi$ are non-zero, 
hence $\phi$ is an isomorphism. Successive Gaussian
elimination homotopies show that this subcomplex is contractible, hence
\[
\E\hComp  {}_{(c,d)}C_{(a,b)}  \simeq \bigoplus_{k\geq 0} 
\qdeg^{b-k(2+\ell)}[\ell+1]\tdeg^k \F^{(d-k-1)} \E^{(b-k)}\oone_{a,b}
\]
for some differential. 
The ``trick'' used in the proof of \cite[Proposition 4.5]{Cautis} now applies
\emph{mutatis mutandis}, showing that $\E \hComp  {}_{(c,d)}C_{(a,b)}$ is homotopy
equivalent to $[\ell+1]$ copies of a complex of the form
\[
N := \bigoplus_{k\geq 0} \qdeg^{b-k(\ell+2)}\tdeg^k \, \F^{(d-1-k)} \E^{(b-k)}\oone_{a,b} 
\]
for some differential.

Now, by induction, we have that 
${}_{(c,d)}C_{(a,b)} \simeq \qdeg^{-b \ell} \, \E^{(\ell)}C_{a,b}$, hence
$\E \hComp {}_{(c,d)}C_{(a,b)}  \simeq \qdeg^{-b\ell}[\ell+1] \, \E^{(\ell+1)} C_{a,b}$.
Since $\E^{(\ell+1)}$ is indecomposable and $C_{a,b}$ is
invertible, the complex $\E^{(\ell+1)}C_{a,b}$ is indecomposable. 
The equivalence $[\ell+1]\E^{(\ell+1)}C_{a,b} \simeq \qdeg^{b\ell}[\ell+1]N$ now implies
that $\E^{(\ell+1)}C_{a,b} \simeq \qdeg^{b\ell} N$, so the latter is indecomposable. 
In particular, all differentials in $N$ are non-zero. 
Corollary \ref{cor:Choms} implies that the space of ($\qdeg$-degree zero) 
maps between consecutive terms in $N$ is one-dimensional, 
thus $N\simeq \qdeg^{b} {}_{(c+1,d-1)}C_{(a,b)}$, which completes the proof.
\end{proof}

We conclude this section by establishing a technical result that is needed below. 
It shows that Lemma \ref{lemma:dot sliding} uniquely characterizes the 
Rickard complex $C_{a,b}$ (and its inverse $C_{a,b}^\vee$) amongst 
complexes having the same underlying bigraded bimodule.

\begin{prop}\label{prop:UniqueYCrossing} 
Let $X := \bigoplus_k \qdeg^{-k} \tdeg^k C_{a,b}^k$. 
Suppose $X$ is equipped with a differential $\d_X$ with
respect to which $h_{r+1}(\leftX_2 - \rightX_1)$ is null-homotopic for some $r\geq 0$, 
then $(X,\d_X)$ is isomorphic to $C_{a,b}$. 
The analogous statement for $C_{a,b}^\vee$ holds as well.
\end{prop}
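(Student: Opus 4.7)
The plan is to first characterize all admissible differentials on $X = \bigoplus_k \qdeg^{-k}\tdeg^k C_{a,b}^k$ up to a sequence of scalars, and then use the null-homotopy hypothesis to force each such scalar to be nonzero. Since each chain group $X^k$ is concentrated in a single bidegree, the only components of $\d_X$ are maps $\d^k \colon X^k \to X^{k+1}$ between consecutive terms; a component $X^k \to X^{k+1+j}$ with $j \geq 1$ would require a bimodule map of negative internal degree, which is impossible. Under the given shifts, each $\d^k$ corresponds to a bimodule morphism $\F^{(a-k)}\E^{(b-k)}\oone_{a,b} \to \F^{(a-k-1)}\E^{(b-k-1)}\oone_{a,b}$ of $\qdeg$-degree $1$, and using the extended graphical calculus of \cite{KLMS} through the $2$-functor $\Phi$ (equivalently, via Corollary \ref{cor:Choms}), this $\Hom$-space is one-dimensional and spanned by $\chi_0^+$.

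Thus every admissible differential has the form $\d^k = \lambda_k\,\chi_0^+$ for scalars $\lambda_k \in \K$, and $\d_X^2 = 0$ holds automatically from the composition identities already valid in $C_{a,b}$. Assuming all $\lambda_k$ are nonzero, a diagonal automorphism of $X$ rescaling each summand $X^k$ by $\mu_k \in \K^\times$ (chosen recursively via $\mu_0 = 1$ and $\mu_{k+1} = \mu_k/\lambda_k$) conjugates $\d_X$ into the standard Rickard differential, yielding the desired isomorphism $(X, \d_X) \cong C_{a,b}$.

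It remains to show that the null-homotopy hypothesis forces every $\lambda_k$ to be nonzero. If $\lambda_{k_0} = 0$ for some $0 \leq k_0 < \min(a,b)$, then $X$ splits as a direct sum of chain complexes $X = X_{\leq k_0} \oplus X_{>k_0}$, and since multiplication by $m := h_{r+1}(\leftX_2 - \rightX_1)$ preserves this decomposition, a block-matrix computation shows that any null-homotopy of $m\cdot\id_X$ restricts through its block-diagonal part to null-homotopies of $m$ on each summand (the off-diagonal parts being automatically $\d_X$-closed and so contributing trivially to $[\d_X, -]$). The case $k_0 = 0$ yields an immediate contradiction: the summand $X^0 = \oone_{a,b}$ is a single bimodule in homological degree $0$, so any null-homotopy is zero and $m|_{X^0} = h_{r+1}(\X_2 - \X_1)$ would have to vanish in $R^{(a,b)}$, but this polynomial is nonzero for all $r \geq 0$. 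For $k_0 \geq 1$, take $k_0$ minimal with $\lambda_{k_0} = 0$ and examine the null-homotopy equation at the rightmost position of $X_{\leq k_0}$: this requires $m|_{X^{k_0}}$ to lie in the image of the precomposition map $\chi_0^+ \circ (-)$ from $\Hom(X^{k_0}, X^{k_0-1})$ into $\End(X^{k_0})$. By the square-flop identity underlying the proof of Lemma \ref{lemma:dot sliding}, one has $m|_{X^{k_0}} \equiv \pm\, \chi_r^- \circ \chi_0^+ \pmod{\im(\chi_0^+\circ -)}$ (the right-hand side being the contribution $\Theta^{k_0+1}\d^{k_0}$ missing from the truncation), and a direct computation using Lemma \ref{lem:edge} together with the bubble evaluations \eqref{eq:bubble}--\eqref{eq:thickbubble} verifies that this obstruction class is nonzero. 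The argument for $C_{a,b}^\vee$ proceeds identically, with $\chi_0^+$ replaced by $\chi_0^-$ throughout.

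The main obstacle will be the final verification in the $k_0 \geq 1$ case: one must check, in the endomorphism ring of the merge--split web $C_{a,b}^{k_0}$ presented via Lemma \ref{lem:edge}, that $\chi_r^- \circ \chi_0^+$ does not lie in the image of $\chi_0^+\circ(-)$. This reduces to a concrete but somewhat involved symmetric-function calculation in the web's edge alphabets.
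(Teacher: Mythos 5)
Your setup matches the paper's: both invoke Corollary \ref{cor:Choms} to reduce $\d_X$ to scalar multiples $\lambda_k \chi_0^+$, and both observe that the task is to show all $\lambda_k$ are nonzero. However, your contradiction argument has a genuine gap where it matters most. You reduce the $k_0 \geq 1$ case to showing that $\chi_r^- \circ \chi_0^+$ is not in $\im(\chi_0^+ \circ (-)) \subset \End(X^{k_0})$, and you explicitly concede that this requires a ``somewhat involved symmetric-function calculation'' that you do not carry out. That calculation is not a routine formality: the $\Hom$-space $\Hom(X^{k_0}, X^{k_0-1})$ in the relevant $\qdeg$-degree can be large, and deciding membership in the image of precomposition by $\chi_0^+$ directly in $\End(\F^{(a-k_0)}\E^{(b-k_0)}\oone_{a,b})$ via the edge-ring presentation is precisely the kind of thing that tends to get out of hand. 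Your $k_0 = 0$ case and the block-diagonal reduction of the homotopy across the split $X = X_{\leq k_0} \oplus X_{>k_0}$ are both fine, but they don't help you past this obstacle.

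The paper sidesteps the obstruction-class computation entirely with a slicker maneuver. From $\d_X|_{C_{a,b}^k} = 0$ and the null-homotopy hypothesis one extracts not just a membership statement but the concrete identity $\chi_0^+ \circ f(\X_2 - \X_1') = 0$ on $C_{a,b}^k$ (using $(\chi_0^+)^2 = 0$). Pre- and post-composing with explicit cap/cup morphisms (thick-strand closures decorated by elementary symmetric functions and Schur polynomials) transports this to the statement that $f(\X_2 - \X_1')$ annihilates a \emph{specific nonzero element} $\pm\mathfrak{s}_{(b-k)^{b-k}}(\X_2 - \X_1')$ of $\End(C_{a,b}^b)$. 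Since $C_{a,b}^b$ is cyclic and $\End(C_{a,b}^b) \cong \Sym(\X_2|\M|\X_1')$ is an integral domain, this is an immediate contradiction. The domain argument replaces your open-ended ``nonmembership in an image'' check by a closed-form evaluation plus a ring-theoretic fact, and it handles all $0 \leq k \leq b-1$ uniformly without case-splitting on $k_0$. If you want to salvage your route, you would need to actually pin down the cokernel of $\chi_0^+ \circ (-)$ and show the obstruction class survives; the paper's detour through $\End(C_{a,b}^b)$ is the more economical path.
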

\begin{proof}
We only consider $C_{a,b}$ and assume that $a \geq b$, 
since the other cases are similar. 
Further, suppose that $b>0$ since otherwise the result holds trivially.
Proposition \ref{cor:Choms} implies that
\[
\d_X|_{C_{a,b}^k} = c_k \cdot \chi^+_0.
\]
for some scalars $c_k \in \K$, 
and that $(X,\d_X)\cong C_{a,b}$ if and only if 
$c_k\neq 0$ for all $0\leq k\leq b-1$.
Let $f=h_{r+1}$ and observe that 
$0 \neq f(\leftX_2 - \rightX_1) \in Z(\End_{\CS(\SSBim)}(X))$.
By hypothesis, there exists 
$\eta\in \End_{\CS(\SSBim)}(X)$ so that $[\d_X,\eta] = f(\leftX_2 - \rightX_1)$.

Now suppose that $(X,\d_X)\ncong C_{a,b}$, 
thus $\d_X|_{C_{a,b}^k}=0$ for some $0 \leq k \leq b-1$.
Since $[\d_X, \eta] = f(\leftX_2 - \rightX_1)$, 
this implies that
$ f(\leftX_2 - \rightX_1) |_{C_{a,b}^k} =  c_{k-1} \cdot \chi^+_0\circ \eta |_{C_{a,b}^k}$.
The equality $(\chi^+_0)^2=0$ then implies that 
$\chi^+_0 \circ f(\X_2-\X_1')=0$ on ${C_{a,b}^k}$.
Since $f(\leftX_2 - \rightX_1)$ is central, the composition
\[
C_{a,b}^b 
\xrightarrow{\begin{tikzpicture}[anchorbase,smallnodes]
	\draw[CQG,ultra thick,<-] (0,-1.25) node[below]{$a{-}b$} to (0,0);
	\draw[CQG,ultra thick,->] (0,-.5) to [out=270,in=270] (.75,-.5) 
		to (.75,0) node[above,yshift=-2pt]{$b{-}k$};
\end{tikzpicture}}
C_{a,b}^k
\xrightarrow{\quad \chi^+_0 \quad}
C_{a,b}^{k+1}
\xrightarrow{\begin{tikzpicture}[anchorbase,smallnodes,yscale=-1]
	\draw[CQG,ultra thick,->] (0,-1.25) node[above,yshift=-2pt]{$a{-}b$} to (0,0);
	\draw[CQG,ultra thick] (0,-.5) to [out=270,in=270] (.75,-.5)
		to node[pos=.3,black]{$\CQGbox{e_{b{-}k{-}1}}$} (.75,0) node[below,yshift=2pt]{$b{-}k$};
\end{tikzpicture}}
C_{a,b}^b 
\]
is thus annihilated by $f(\leftX_2 - \rightX_1)$ as well. 
On the other hand this composition equals
\begin{equation}\label{eq:zerodiv}
(-1)^k
\begin{tikzpicture}[anchorbase,smallnodes]
	\draw[CQG,ultra thick,<-] (0,-1.25) node[below]{$a{-}b$} to (0,1.25);
	\draw[CQG,ultra thick,directed=.85] (0,-.5) to [out=270,in=270] (.75,-.5) node[right]{$b{-}k$} 
		to node[pos=.8,black]{$\CQGbox{e_{b{-}k{-}1}}$} (.75,.5) to [out=90,in=90] (0,.5);
	\draw[CQG,thick, directed=.55] (.75,-.25) to [out=90,in=90] (0,-.25);
	\node at (1.25,.75) {$(a,b)$};
\end{tikzpicture}
=
(-1)^{b-1}\begin{tikzpicture}[anchorbase,smallnodes]
	\draw[CQG,ultra thick,<-] (0,-1.25) node[below]{$a{-}b$} to (0,1.25);
	\draw[CQG,ultra thick,directed=.5] (0,-.5) to [out=270,in=270] (.75,-.5) node[right]{$b{-}k$} 
		to (.75,.5) to [out=90,in=90] (0,.5);
	\node at (1.25,.75) {$(a,b)$};
\end{tikzpicture}
=
(-1)^{b-1}\sum_{\alpha,\gamma} c_{\alpha,\gamma}^{(b-k)^{b-k}}
\begin{tikzpicture}[anchorbase,smallnodes]
	\draw[CQG,ultra thick,<-] (0,-1) node[below]{$a{-}b$} to 
		node[black]{$\CQGbox{\mathfrak{s}_\alpha}$} (0,1);
	\draw[CQG,ultra thick,directed=.25] (1.5,0) arc (0:361:0.5) node[right]{$b-k$}
		node[black,pos=.75]{$\CQGbbox{\mathfrak{s}_\gamma^\spadesuit}$};
\end{tikzpicture}.
\end{equation}
By \eqref{eq:thickbubble}, the thick bubble evaluates to
$(-1)^{(b-k)(b-k-1)/2} \mathfrak{s}_{\gamma}(\rightX_2-\rightX_1)$ 
thus
\[
\eqref{eq:zerodiv} = \pm 
\sum_{\alpha,\gamma}  c_{\alpha,\gamma}^{(b-k)^{b-k}}
\mathfrak{s}_{\alpha}(\X_2-\X'_2)\mathfrak{s}_{\gamma}(\X'_2-\X'_1)  
= 
\pm 
\mathfrak{s}_{(b-k)^{b-k}}(\X_2-\X'_1) \neq 0.
\]
This endomorphism of $C_{a,b}^b $ is annihilated by $f(\X_2-\X_1')$, 
contradicting the fact that $\End_{\SSBim}(C_{a,b}^b)$
contains no zero divisors. 
To see the latter, note that
\[
C_{a,b}^b =
\begin{tikzpicture}[smallnodes,rotate=90,baseline=.1em,scale=.66]
\draw[very thick] (0,.25) to [out=150,in=270] (-.25,1) node[left,xshift=2pt]{$b$};
\draw[very thick] (.5,.5) to (.5,1) node[left,xshift=2pt]{$a$};
\draw[very thick] (0,.25) to  (.5,.5);
\draw[very thick] (0,-.25) to (0,.25);
\draw[very thick] (.5,-.5) to [out=30,in=330] node[above=-2pt]{$b$} (.5,.5);
\draw[dotted] (0,-.25) to  (.5,-.5);
\draw[very thick] (.5,-1) node[right,xshift=-2pt]{$b$} to (.5,-.5);
\draw[very thick] (-.25,-1)node[right,xshift=-2pt]{$a$} to [out=90,in=210] (0,-.25);
\end{tikzpicture} 
\]
 is a quotient of its incoming and outgoing edge rings. 
Thus, $C_{a,b}^b $ is a cyclic bimodule and its algebra of endomorphism is
isomorphic to $C_{a,b}^b  \cong \Sym(\X_2|\M|\X_1')$, 
which has no zero divisors.
\end{proof}

\section{The colored skein relation}
\label{sec:coloredskein}
The colored skein relation (Theorem \ref{thm:coloredskein} below) 
asserts that there exists a one-sided twisted complex
constructed from the complexes of ``threaded digons''
\[
\left\llbracket
\begin{tikzpicture}[scale=.4,smallnodes,rotate=90,anchorbase]
\draw[very thick] (1,-1) to [out=150,in=270] (0,0); 
\draw[line width=5pt,color=white] (0,-2) to [out=90,in=270] (.5,0) to [out=90,in=270] (0,2);
\draw[very thick] (0,-2) node[right=-2pt]{$a$} to [out=90,in=270] (.5,0) 
	to [out=90,in=270] (0,2) node[left=-2pt]{$a$};
\draw[very thick] (1,1) to (1,2) node[left=-2pt]{$b$};
\draw[line width=5pt,color=white] (0,0) to [out=90,in=210] (1,1); 
\draw[very thick] (0,0) to [out=90,in=210] (1,1); 
\draw[very thick] (1,-2) node[right=-2pt]{$b$} to (1,-1); 
\draw[very thick] (1,-1) to [out=30,in=330] node[above=-2pt]{$s$} (1,1); 
\end{tikzpicture}
\right\rrbracket
\]
for $0 \leq s \leq b$ that is homotopy equivalent to a certain 
Koszul complex constructed from the complexes
\[
\left\llbracket
\begin{tikzpicture}[scale=.4,smallnodes,anchorbase,rotate=270]
\draw[very thick] (1,-1) to [out=150,in=270] (0,1) to (0,2) node[right=-2pt]{$b$}; 
\draw[line width=5pt,color=white] (0,-2) to (0,-1) to [out=90,in=210] (1,1);
\draw[very thick] (0,-2) node[left=-2pt]{$b$} to (0,-1) to [out=90,in=210] (1,1);
\draw[very thick] (1,1) to (1,2) node[right=-2pt]{$a$};
\draw[very thick] (1,-2) node[left=-2pt]{$a$} to (1,-1); 
\draw[very thick] (1,-1) to [out=30,in=330] node[below=-1pt]{$a{-}b$} (1,1); 
\end{tikzpicture}
\right\rrbracket \, .
\]

This section is organized as follows.  
In \S \ref{ss:rhs}, we develop just enough background to
precisely state the colored skein relation.  
In \S \ref{ss:filtration}, we give an explicit algebraic model
for the right-hand side of the skein relation 
and construct a filtration thereof. 
The subquotients with respect to this filtration will be denoted by
$\MCCSmin_{a,b}^s$ for the duration.
In \S \ref{ss:ft computation}, we show that
\[
\MCCSmin_{a,b}^0 \simeq 
\left\llbracket
 \begin{tikzpicture}[scale=.5,smallnodes,anchorbase,rotate=90]
 \draw[very thick] (1,0) to [out=90,in=270] (0,1.5);
 \draw[line width=5pt,color=white] (1,-1.5) to [out=90,in=270] (0,0) 
to [out=90,in=270] (1,1.5);
 \draw[very thick] (1,-1.5) node[right=-2pt]{$b$} to [out=90,in=270] (0,0) 
to [out=90,in=270] (1,1.5);
 \draw[line width=5pt,color=white] (0,-1.5) to [out=90,in=270] (1,0);
 \draw[very thick] (0,-1.5) node[right=-2pt]{$a$} to [out=90,in=270] (1,0);
 \end{tikzpicture}
\right\rrbracket \, .
\]
This equivalence proves (a version of) \cite[Conjecture 1.3]{BH}. 
The proof of our colored skein relation is completed in \S \ref{ss:colored skein}; 
the main ingredient is an isomorphism
\[
\MCCSmin_{a,b}^s \cong
\begin{tikzpicture}[scale=.45,smallnodes,rotate=90,anchorbase]
	\draw[very thick] (1,2.25) to (1,3) node[left=-2pt]{$b$};
	\draw[very thick] (1,-3) node[right=-2pt]{$b$} to (1,-2.25); 
	\draw[very thick] (1,-2.25) to [out=30, in=270] (1.75,-1.6) 
		to (1.75,0) node[below=-2pt]{$s$}to (1.75,1.6)  to  [out=90,in=330] (1,2.25); 
	\node[yshift=-2pt] at (0,0) {$\MCCSmin_{a,b{-}s}^0$};
	\draw[very thick] (.75,1.6) rectangle (-.75,-1.6);
	\draw[very thick] (1,-2.25) to [out=150,in=270] (.25,-1.6);
	\draw[very thick] (1,2.25) to [out=210,in=90] (.25,1.6);
	\draw[very thick] (-.25,3) node[left=-2pt]{$a$} to (-.25,1.6);
	\draw[very thick] (-.25,-3) node[right=-2pt]{$a$} to (-.25,-1.6);
\end{tikzpicture} \, .
\]

\subsection{Statement of the colored skein relation}\label{ss:rhs}

For the duration, fix integers $a,b\geq 0$ and let $\CS_{a,b} := \CS({}_{a,b}\SSBim_{a,b})$. 
For $X \in \CS_{a,b}$, we will use the following conventions for the boundary alphabets
\[
\begin{tikzpicture}[scale=.5,smallnodes,anchorbase]
	\draw[very thick] (-1.5, .375) node[left=-2pt]{$\leftX_2$} to (-.75,.375);
	\draw[very thick] (-1.5, -.375) node[left=-2pt]{$\leftX_1$} to (-.75, -.375);
	\node[yshift=-2pt] at (0,0) {\normalsize$X$};
	\draw[very thick] (.75,.75) rectangle (-.75,-.75);
	\draw[very thick] (1.5, .375) node[right=-2pt]{$\rightX_2$} to (.75,.375);
	\draw[very thick] (1.5, -.375) node[right=-2pt]{$\rightX_1$} to (.75, -.375);
\end{tikzpicture} \, .
\]
Note that we have an algebra homomorphism
$\Sym(\leftX_1 | \leftX_2 | \rightX_1 | \rightX_2) \to Z(\End_{\CS_{a,b}}(X))$.

The ``right-hand side'' of our skein relation involves the construction of
Koszul complexes, which we now recall.

\begin{definition}\label{def:Koszul cx} 
For each $X \in \CS_{a,b}$, 
let $K(X)$ denote the Koszul complex associated to the action of 
$h_1(\leftX_2 - \rightX_2),\ldots,h_b(\leftX_2-\rightX_2)$ on $X$. 
Explicitly, we consider the bigraded $\K$-vector space $\largewedge[\xi_1,\dots,\xi_b]$ 
in which the $\xi_i$ are exterior variables with $\wt(\xi_i)=\qdeg^{2i} \tdeg\inv$
and define bimodules
\[
K(X) := \tw_{\sum_{i=1}^b h_i(\leftX_2-\rightX_2)\otimes \xi_i^\ast}\left(X\otimes \largewedge[\xi_1,\dots,\xi_b]\right) \, .
\]
Here, $\xi_i^\ast$ is the endomorphism (in fact, derivation) of
$\largewedge[\xi_1,\dots,\xi_b]$ with $\wt(\xi_i^{\ast})=\qdeg^{-2i} \tdeg^{1}$
defined by
\[
\xi_i^\ast(\xi_i)=1
\, , \quad
\xi_i^\ast(\xi_j)=0\quad(i\neq j)
\, , \quad
\xi_i^\ast(\eta \nu)=\xi_i^\ast(\eta) \nu + (-1)^{|\eta|} \eta \xi_i^\ast(\nu) \, .
\]
\end{definition}

\begin{remark}\label{rem:sign}
Before turning on the Koszul differential we have
\[
X\otimes \largewedge[\xi_1,\dots,\xi_b] = \bigoplus_{l=0}^b \bigoplus_{i_1<\cdots<i_l} X\otimes \xi_{i_1}\cdots\xi_{i_l},
\]
where each $X\otimes \xi_{i_1}\cdots\xi_{i_l}$ 
denotes a copy of $X$ (appropriately shifted).
The usual Koszul sign conventions tell us that the differential on 
$X\otimes \xi_{i_1}\cdots\xi_{i_l}$ coincides with $\d_X$ \emph{with no sign}, 
since the monomial in $\xi$'s appears on the right. 
\end{remark}

\begin{lemma}\label{lemma:K is dg fun}
The assignment $X\mapsto K(X)$ is a dg functor.
\end{lemma}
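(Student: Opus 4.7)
The plan is to unpack the definition and verify the three defining properties of a dg functor: $K$ sends objects to valid dg objects, $K$ sends morphisms to morphisms compatibly with composition and identity, and $K$ intertwines the Hom-complex differentials.

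First I will define $K$ on morphisms in the obvious way: for $f\in \Hom_{\CS_{a,b}}(X,Y)$ set $K(f):=f\otimes \id_{\largewedge[\xi_1,\dots,\xi_b]}$. Interpreted through the Koszul sign rule, this has the same bidegree as $f$, and functoriality ($K(\id_X)=\id_{K(X)}$ and $K(g\circ f)=K(g)\circ K(f)$) is immediate since the right-hand tensor factor is the identity and contributes no signs.

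Next I would verify that $K(X)$ is indeed a complex, i.e.~that $\delta_{K(X)}:=\d_X\otimes \id + \alpha$ satisfies $\delta_{K(X)}^2=0$, where $\alpha:=\sum_{i=1}^b h_i(\leftX_2-\rightX_2)\otimes \xi_i^{\ast}$. Expanding and using $\d_X^2=0$, one is reduced to checking $[\d_X\otimes \id,\alpha]=0$ and $\alpha^2=0$. The first of these follows from the fact, recalled just before Definition~\ref{def:Koszul cx}, that the $h_i(\leftX_2-\rightX_2)$ lie in the center of $\End_{\CS_{a,b}}(X)$, hence are closed endomorphisms; so each summand $h_i(\leftX_2-\rightX_2)\otimes \xi_i^{\ast}$ is a tensor product of a closed morphism and a fixed exterior derivation, and graded commutes with $\d_X\otimes \id$. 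The second follows from the fact that the $\xi_i^{\ast}$ anticommute while the $h_i(\leftX_2-\rightX_2)$ commute, so under the Koszul sign rule $\alpha^2 = \sum_{i,j} h_i h_j \otimes \xi_i^{\ast}\xi_j^{\ast}$ symmetrizes to zero.

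Finally I would check the compatibility $d(K(f))=K(d(f))$ on Hom complexes. Writing $\d_{K(Y)}=\d_Y\otimes \id+\alpha_Y$ and $\d_{K(X)}=\d_X\otimes \id+\alpha_X$, a direct expansion gives
\[
d(K(f)) \;=\; d(f)\otimes \id \;+\; (-1)^{|f|}\sum_{i=1}^b\bigl(h_i(\leftX_2-\rightX_2)\circ f - f\circ h_i(\leftX_2-\rightX_2)\bigr)\otimes \xi_i^{\ast}.
\]
The first term is exactly $K(d(f))$, so it suffices to see that the correction term vanishes. This is naturality of the action of $\Sym(\leftX_1|\leftX_2|\rightX_1|\rightX_2)$: for any morphism $f\colon X\to Y$ in $\CS_{a,b}$, the action of $h_i(\leftX_2-\rightX_2)$ on $X$ and on $Y$ is induced from left and right multiplication by elements of the boundary symmetric polynomial rings, which any $(R^{(a,b)},R^{(a,b)})$-bimodule map respects. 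The only step that requires care is bookkeeping of Koszul signs throughout the expansion; I do not anticipate any genuine obstacle beyond this.
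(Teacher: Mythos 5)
Your proof is correct, but it takes a genuinely different route from the paper's. The paper proves the lemma in one line by rewriting $K(X)$ as a tensor product over a fixed commutative ring with a fixed Koszul complex:
\[
K(X) \cong X \otimes_{\Sym(\leftX_2 | \rightX_2)}
\tw_{\sum_{i=1}^b h_i(\leftX_2-\rightX_2)\otimes \xi_i^\ast}
\big( \Sym(\leftX_2 | \rightX_2) \otimes \largewedge[\xi_1,\ldots,\xi_b] \big),
\]
and then appeals to the general fact that ``$-\otimes_{R} Z$ for a fixed complex $Z$'' is a dg functor. You instead perform the direct verification: define $K(f)=f\otimes\id$, check $\delta_{K(X)}^2=0$, and check $d(K(f))=K(d(f))$. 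Both are sound, and your verification is essentially the unwound version of the paper's slogan. Your approach has the pedagogical merit of making explicit the two places where centrality of the $h_i(\leftX_2-\rightX_2)$ is invoked (closedness of $\alpha$-summands, and vanishing of the correction term in $d(K(f))$), whereas the paper's proof hides these in the choice of base ring for the tensor product. The paper's version buys brevity and conceptual clarity, and also makes transparent that the same argument applies uniformly to other Koszul-type twists; your version buys an explicit sign bookkeeping that would be needed anyway if one wanted to compute with $K$ in practice. One small caveat in your write-up: when you say ``the $h_i(\leftX_2-\rightX_2)$ lie in the center of $\End_{\CS_{a,b}}(X)$, hence are closed,'' it is worth observing that this uses that the differential $\d_X$ is itself an element of $\End^1_{\CS_{a,b}}(X)$ (cf.\ Definition~\ref{def:dgCatCom}), so that centrality really does imply $[\d_X,h_i]=0$; this is implicit in the paper's phrase ``algebra homomorphism to $Z(\End_{\CS_{a,b}}(X))$'' but is easy to gloss over.
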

\begin{proof}
This follows since we can describe
\[
K(X) \cong X \otimes_{\Sym(\leftX_2 | \rightX_2)} 
\tw_{\sum_{i=1}^b h_i(\leftX_2-\rightX_2)\otimes \xi_i^\ast} 
\big( \Sym(\leftX_2 | \rightX_2) \otimes \largewedge[\xi_1,\ldots,\xi_b] \big) \, . \qedhere
\]
\end{proof}

Using this construction, we can now state our main theorem.

\begin{thm}[Colored skein relation]\label{thm:coloredskein} For each pair of
integers $a,b\geq 0$ there is homotopy equivalence in $\CS_{a,b}$ of the form
\begin{equation}\label{eq:skeinrel}
\tw_{D^c}\left( \bigoplus_{s=0}^b \qdeg^{s(b-1)} \tdeg^s 
\left\llbracket
\begin{tikzpicture}[scale=.4,smallnodes,rotate=90,anchorbase]
\draw[very thick] (1,-1) to [out=150,in=270] (0,0); 
\draw[line width=5pt,color=white] (0,-2) to [out=90,in=270] (.5,0) to [out=90,in=270] (0,2);
\draw[very thick] (0,-2) node[right=-2pt]{$a$} to [out=90,in=270] (.5,0) 
	to [out=90,in=270] (0,2) node[left=-2pt]{$a$};
\draw[very thick] (1,1) to (1,2) node[left=-2pt]{$b$};
\draw[line width=5pt,color=white] (0,0) to [out=90,in=210] (1,1); 
\draw[very thick] (0,0) to [out=90,in=210] (1,1); 
\draw[very thick] (1,-2) node[right=-2pt]{$b$} to (1,-1); 
\draw[very thick] (1,-1) to [out=30,in=330] node[above=-2pt]{$s$} (1,1); 
\end{tikzpicture}
\right\rrbracket 
\right) 
\simeq
\qdeg^{b(a-b-1)}\tdeg^b K\left( 
\left\llbracket
\begin{tikzpicture}[scale=.4,smallnodes,anchorbase,rotate=270]
\draw[very thick] (1,-1) to [out=150,in=270] (0,1) to (0,2) node[right=-2pt]{$b$}; 
\draw[line width=5pt,color=white] (0,-2) to (0,-1) to [out=90,in=210] (1,1);
\draw[very thick] (0,-2) node[left=-2pt]{$b$} to (0,-1) to [out=90,in=210] (1,1);
\draw[very thick] (1,1) to (1,2) node[right=-2pt]{$a$};
\draw[very thick] (1,-2) node[left=-2pt]{$a$} to (1,-1); 
\draw[very thick] (1,-1) to [out=30,in=330] node[below=-1pt]{$a{-}b$} (1,1); 
\end{tikzpicture}
\right\rrbracket
\right)
\end{equation}
in which the twist $D^c$ strictly increases the index $s$.
\end{thm}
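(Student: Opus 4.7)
The plan is to establish the equivalence by producing an explicit filtration on the right-hand side whose subquotients are the summands on the left-hand side; the twist $D^c$ will then arise as the induced filtration differential. By Proposition~\ref{prop:topological shifted rickard}, the shifted Rickard complex $\MCSmin_{a,b}$ is homotopy equivalent to the Y-crossing web on the right of~\eqref{eq:skeinrel}, so it suffices to analyze $K(\MCSmin_{a,b})$. Using the explicit model
\[
\MCSmin_{a,b} = \bigoplus_{k=0}^{b} \qdeg^{-k(a-b+1)} \tdeg^k \, \F^{(b-k)} \E^{(b-k)} \oone_{a,b},
\]
the complex $K(\MCSmin_{a,b})$ becomes a double complex, with one direction coming from $\MCSmin_{a,b}$ (indexed by the rung count $k$) and the other from $\largewedge[\xi_1,\ldots,\xi_b]$.

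First I would construct a decreasing filtration $F^\bullet$ on $K(\MCSmin_{a,b})$, defined combinatorially on the $k$-summands and exterior monomials, whose $s$-th associated graded piece is by definition the complex $\MCCSmin_{a,b}^s$. The pieces of the differential that preserve the filtration degree assemble into the internal differentials of each $\MCCSmin_{a,b}^s$, while the strictly-increasing pieces form the twist $D^c$ that knits them together. The key structural input is a decomposition of the Koszul differential $\sum h_i(\leftX_2-\rightX_2)\otimes \xi_i^\ast$ arising from Lemma~\ref{lemma:somerelations1} (or equivalently from the generating function identities of \S\ref{ss:sym fns}), which separates the terms landing in the same filtration degree from those strictly raising it.

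Second, I would identify $\MCCSmin_{a,b}^0$ with the Rickard complex of the two-strand full twist on colors $(a,b)$. This is the technical heart of the argument, and amounts to a version of the Beliakova--Habiro conjecture \cite[Conjecture~1.3]{BH}. The strategy is to verify that $\MCCSmin_{a,b}^0$ has the correct underlying bigraded bimodule and that the action of $h_{r+1}(\leftX_2-\rightX_1)$ on it is null-homotopic, and then to apply a uniqueness argument in the spirit of Proposition~\ref{prop:UniqueYCrossing} (extended to the two-fold composition modelling the full twist). The required null-homotopies can be built from the Koszul generators $\xi_i$ together with the homotopies supplied by Lemma~\ref{lemma:dot sliding}.

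Third, for general $s$ I would establish an isomorphism between $\MCCSmin_{a,b}^s$ and $\MCCSmin_{a,b-s}^0$ with an $s$-strand rung threaded through, by factoring the relevant piece of the filtration through the splitter/merge isomorphisms of \S\ref{ss:ssbim}; this peels an $s$-rung off the top and bottom of the digon and reduces the computation to the already-treated case with smaller color. Combined with the $s=0$ identification applied to colors $(a, b-s)$, this matches $\MCCSmin_{a,b}^s$ with the threaded digon complex on the left of~\eqref{eq:skeinrel}; a routine bookkeeping of Koszul and shifted Rickard degrees produces the overall shift $\qdeg^{s(b-1)}\tdeg^s$. The main obstacle is the second step: extracting the full twist from the Koszul data requires a nontrivial explicit homotopy equivalence, mirroring (and categorifying) the decategorified identity~\eqref{eq:colskeindecat}.
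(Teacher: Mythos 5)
Your proposal follows the same three-step route as the paper: the decreasing $s$-filtration of $K(\MCSmin_{a,b})$ via the exterior-algebra basis change of Lemma~\ref{lemma:somerelations1}, the identification of the $s=0$ subquotient with the full twist via null-homotopy of $h_r(\leftX_2-\rightX_2)$ plus a rigidity argument, and the reduction of the general-$s$ subquotient to $\I^{(s)}(\MCCSmin^0_{a,b-s})$. The one tactical detail worth noting is that for step 2 the paper does not extend Proposition~\ref{prop:UniqueYCrossing} to the two-crossing composite but instead composes $\MCCSmin^0_{a,b}$ with $C^{\vee}_{a,b}$ and applies the proposition as stated (after first matching up the chain groups via Proposition~\ref{prop:interpretation of Rl0}), which sidesteps having to analyze $\Hom$-spaces between the more complicated chain groups of the full twist complex.
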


The following shorthand will often be useful.

\begin{definition}\label{def:MCS and KMCS}
We will use the following notation for the complexes appearing in \eqref{eq:skeinrel}
\[
\MCCS_{a,b}^s := \left\llbracket
\begin{tikzpicture}[scale=.4,smallnodes,rotate=90,anchorbase]
\draw[very thick] (1,-1) to [out=150,in=270] (0,0); 
\draw[line width=5pt,color=white] (0,-2) to [out=90,in=270] (.5,0) to [out=90,in=270] (0,2);
\draw[very thick] (0,-2) node[right=-2pt]{$a$} to [out=90,in=270] (.5,0) 
	to [out=90,in=270] (0,2) node[left=-2pt]{$a$};
\draw[very thick] (1,1) to (1,2) node[left=-2pt]{$b$};
\draw[line width=5pt,color=white] (0,0) to [out=90,in=210] (1,1); 
\draw[very thick] (0,0) to [out=90,in=210] (1,1); 
\draw[very thick] (1,-2) node[right=-2pt]{$b$} to (1,-1); 
\draw[very thick] (1,-1) to [out=30,in=330] node[above=-2pt]{$s$} (1,1); 
\end{tikzpicture}
\right\rrbracket
\, , \quad
\MCS_{a,b} := \left\llbracket
\begin{tikzpicture}[scale=.4,smallnodes,anchorbase,rotate=270]
\draw[very thick] (1,-1) to [out=150,in=270] (0,1) to (0,2) node[right=-2pt]{$b$}; 
\draw[line width=5pt,color=white] (0,-2) to (0,-1) to [out=90,in=210] (1,1);
\draw[very thick] (0,-2) node[left=-2pt]{$b$} to (0,-1) to [out=90,in=210] (1,1);
\draw[very thick] (1,1) to (1,2) node[right=-2pt]{$a$};
\draw[very thick] (1,-2) node[left=-2pt]{$a$} to (1,-1); 
\draw[very thick] (1,-1) to [out=30,in=330] node[below=-1pt]{$a{-}b$} (1,1); 
\end{tikzpicture}
\right\rrbracket
\] 
(read\footnote{It is best to read this, and the notation, from right-to-left.} 
as ``Merge-Crossing-Crossing-Split" and ``Merge-Crossing-Split").
Additionally, set $\KMCS_{a,b}:=K(\MCS_{a,b})$.
\end{definition}

Using Proposition \ref{prop:topological shifted rickard}, we can give a precise
algebraic model for $\KMCS_{a,b}$.

\begin{definition}
\label{def:MCSmin}
Set $\MCSmin_{a,b}:={}_{(a,b)}C_{(a,b)}$, i.e. diagrammatically:
\begin{equation}
\label{eq:defMCSmin}
\MCSmin_{a,b}:=
\left(
\begin{tikzpicture}[smallnodes,rotate=90,baseline=.1em,scale=.525]
\draw[very thick] (0,.25) to [out=150,in=270] (-.25,1) node[left,xshift=2pt]{$a$};
\draw[very thick] (.5,.5) to (.5,1) node[left,xshift=2pt]{$b$};
\draw[very thick] (0,.25) to  (.5,.5);
\draw[very thick] (0,-.25) to (0,.25);
\draw[dotted] (.5,-.5) to [out=30,in=330] node[above=-2pt]{$0$} (.5,.5);
\draw[very thick] (0,-.25) to  (.5,-.5);
\draw[very thick] (.5,-1) node[right,xshift=-2pt]{$b$} to (.5,-.5);
\draw[very thick] (-.25,-1)node[right,xshift=-2pt]{$a$} to [out=90,in=210] (0,-.25);
\end{tikzpicture}
\to  
\qdeg^{-(a-b+1)} \tdeg\begin{tikzpicture}[smallnodes,rotate=90,baseline=.1em,scale=.525]
\draw[very thick] (0,.25) to [out=150,in=270] (-.25,1) node[left,xshift=2pt]{$a$};
\draw[very thick] (.5,.5) to (.5,1) node[left,xshift=2pt]{$b$};
\draw[very thick] (0,.25) to  (.5,.5);
\draw[very thick] (0,-.25) to (0,.25);
\draw[very thick] (.5,-.5) to [out=30,in=330] node[above=-2pt]{$1$} (.5,.5);
\draw[very thick] (0,-.25) to  (.5,-.5);
\draw[very thick] (.5,-1) node[right,xshift=-2pt]{$b$} to (.5,-.5);
\draw[very thick] (-.25,-1)node[right,xshift=-2pt]{$a$} to [out=90,in=210] (0,-.25);
\end{tikzpicture}
\to \cdots \to
\qdeg^{-b(a-b+1)} \tdeg^b
\begin{tikzpicture}[smallnodes,rotate=90,baseline=.1em,scale=.525]
\draw[very thick] (0,.25) to [out=150,in=270] (-.25,1) node[left,xshift=2pt]{$a$};
\draw[very thick] (.5,.5) to (.5,1) node[left,xshift=2pt]{$b$};
\draw[dotted] (0,.25) to  (.5,.5);
\draw[very thick] (0,-.25) to (0,.25);
\draw[very thick] (.5,-.5) to [out=30,in=330] node[above=-2pt]{$b$} (.5,.5);
\draw[dotted] (0,-.25) to  (.5,-.5);
\draw[very thick] (.5,-1) node[right,xshift=-2pt]{$b$} to (.5,-.5);
\draw[very thick] (-.25,-1)node[right,xshift=-2pt]{$a$} to [out=90,in=210] (0,-.25);
\end{tikzpicture}
\right) \, .
\end{equation}
Let $\KMCSmin_{a,b}:=K(\MCSmin_{a,b})$.
\end{definition}

The $d=b$ case of Proposition \ref{prop:topological shifted rickard}
gives that
\begin{equation}\label{eq:topological MCS}
\MCSmin_{a,b} \simeq \MCS_{a,b}
\, , \quad
\KMCS_{a,b} \simeq \KMCSmin_{a,b},
\end{equation}
where the second homotopy equivalence follows from the first 
by Lemma~\ref{lemma:K is dg fun}.

We now establish language for discussing $\KMCSmin_{a,b}$ 
and its chain groups. 
Fix $a, b\geq 0$ and consider the bimodules
\[
W_k := 
\begin{tikzpicture}[smallnodes,rotate=90,anchorbase,scale=.625]
	\draw[very thick] (0,.25) to [out=150,in=270] (-.25,1) node[left,xshift=2pt]{$a$};
	\draw[very thick] (.5,.5) to (.5,1) node[left,xshift=2pt]{$b$};
	\draw[very thick] (0,.25) to node[left,xshift=2pt,yshift=-2pt]{$k$} (.5,.5);
	\draw[very thick] (0,-.25) to (0,.25);
	\draw[very thick] (.5,-.5) to [out=30,in=330] (.5,.5);
	\draw[very thick] (0,-.25) to node[right,xshift=-2pt,yshift=-2pt]{$k$} (.5,-.5);
	\draw[very thick] (.5,-1) node[right,xshift=-2pt]{$b$} to (.5,-.5);
	\draw[very thick] (-.25,-1)node[right,xshift=-2pt]{$a$} to [out=90,in=210] (0,-.25);
\end{tikzpicture}
= \F^{(k)}\E^{(k)}\oone_{a,b}
\]
for $0 \leq k \leq b$.  We follow Convention \ref{conv:Alph} in assigning
alphabets to each of the edges in the web depicting these bimodules, namely:
\[
\begin{tikzpicture}[rotate=90,anchorbase]
	\draw[very thick] (0,.25) to [out=150,in=270] (-.25,1) node[left,xshift=2pt]{$\leftX_1$};
	\draw[very thick] (.5,.5) to (.5,1) node[left,xshift=2pt]{$\leftX_2$};
	\draw[very thick] (0,.25) to node[left,yshift=-1pt,xshift=3pt]{$\leftM$} (.5,.5);
	\draw[very thick] (0,-.25) to node[below,yshift=2pt]{$\Fr$} (0,.25);
	\draw[very thick] (.5,-.5) to [out=30,in=330] node[above,yshift=-2pt]{$\B$} (.5,.5);
	\draw[very thick] (0,-.25) to node[right,yshift=-1pt,xshift=-1pt]{$\rightM$} (.5,-.5);
	\draw[very thick] (.5,-1) node[right,xshift=-2pt]{$\rightX_2$} to (.5,-.5);
	\draw[very thick] (-.25,-1)node[right,xshift=-2pt]{$\rightX_1$} to [out=90,in=210] (0,-.25);
\end{tikzpicture} \, .
\]
If we wish to emphasize the index $k$, we will write $\leftM^{(k)}, {\rightM}^{(k)}$, etc. 
In particular, we note that 
\[
|\leftM^{(k)}| = k = |{\rightM}^{(k)}|
\, , \quad 
|\B^{(k)}| = b-k
\, , \quad 
|\Fr^{(k)}| = a+k
\]
while
\[
|\leftX_1| = a = |\rightX_1|
\, , \quad
|\leftX_2| = b = |\rightX_2|
\]
for all $k$.  
The Koszul complex $\KMCSmin_{a,b}$ can be efficiently described as follows.

\begin{proposition}\label{prop:KMCS 1}
We have
\begin{equation}
\KMCSmin_{a,b} = \Big(K(W_b)\xrightarrow{\d^H} \qdeg^{a-b+1}\tdeg K(W_{b-1})\xrightarrow{\d^H} \cdots \xrightarrow{\d^H} \qdeg^{b(a-b+1)}\tdeg^b K(W_0)\Big),
\end{equation}
where $\d^H = K(\chi^+_0) \colon K(W_k)\rightarrow K(W_{k-1})$.
\qed
\end{proposition}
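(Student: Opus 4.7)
The plan is to apply the functor $K$ term-by-term to the explicit description of $\MCSmin_{a,b}$ provided in \eqref{eq:defMCSmin}. Concretely, the first step is to recall from Definition \ref{def:MCSmin} and Definition \ref{def:shifted rickards} (with $d=b$) that
\[
\MCSmin_{a,b} = \tw_{\chi_0^+}\left(\bigoplus_{k=0}^{b}\qdeg^{-k(a-b+1)}\tdeg^k\, W_{b-k}\right),
\]
where the identification $\F^{(b-k)}\E^{(b-k)}\oone_{a,b} = W_{b-k}$ comes from Convention \ref{conv:CQG} and the twisting differential acts componentwise via the maps $\chi_0^+\colon W_{b-k}\to W_{b-k-1}$.

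The second step is to appeal to Lemma \ref{lemma:K is dg fun}, which identifies $K$ with the functor of tensoring over $\Sym(\leftX_2|\rightX_2)$ with the fixed Koszul complex $\Sym(\leftX_2|\rightX_2)\otimes \largewedge[\xi_1,\ldots,\xi_b]$. As such, $K$ is an additive dg functor that commutes with direct sums and with grading and homological shifts, and in particular sends one-sided twisted complexes to one-sided twisted complexes. Applying it to the display above therefore produces a one-sided twisted complex whose chain groups at position $k$ are the appropriately shifted $K(W_{b-k})$ appearing in the statement, equipped with the horizontal differential $\d^H := K(\chi_0^+)$ between consecutive terms $K(W_{b-k})\to K(W_{b-k-1})$. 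Since $\KMCSmin_{a,b} := K(\MCSmin_{a,b})$ by Definition \ref{def:MCS and KMCS}, this yields the claimed description.

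This is essentially a formal consequence of the definitions, so no significant obstacle arises. The only mild point worth verifying is that the internal Koszul differentials on each $K(W_{b-k})$ and the horizontal maps $\d^H$ assemble into a single well-defined total differential; this is automatic from the dg functoriality of $K$ together with the observation that each $\chi_0^+$ commutes (up to the standard Koszul sign rule) with the $\Sym(\leftX_2|\rightX_2)$-action used to form $K$.
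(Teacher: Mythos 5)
Your approach is correct and is precisely the (implicit) argument the paper intends; the paper simply marks the proposition \qed with no written proof, since the statement is a direct unwinding of Definitions~\ref{def:MCS and KMCS} and~\ref{def:MCSmin} together with the fact (Lemma~\ref{lemma:K is dg fun}) that $K(-) = -\otimes_{\Sym(\leftX_2|\rightX_2)}K'$ for a fixed complex $K'$ is an additive dg functor, hence commutes with one-sided twists, shifts, and the underlying direct-sum decomposition.

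One detail you pass over that is worth flagging: the shift you correctly derive from Definition~\ref{def:shifted rickards} (with $c=a$, $d=b$) is $\qdeg^{-k(a-b+1)}\tdeg^k$ on the $k$-th term $K(W_{b-k})$, whereas the displayed equation in Proposition~\ref{prop:KMCS 1} reads $\qdeg^{+k(a-b+1)}\tdeg^k$. The sign in the proposition's display appears to be a typo: it disagrees with eq.~\eqref{eq:defMCSmin}, with the $q$-degree $a-b+1$ of the map $\chi_0^+$ in this weight, and with the bookkeeping of Proposition~\ref{prop:KMCSdiffs}, where the total shift on $W_k$ inside $\qdeg^{b(a-b-1)}\tdeg^b\KMCSmin_{a,b}$ is $\qdeg^{k(a-b+1)-2b}\tdeg^{2b-k}$, equivalently $\qdeg^{-(b-k)(a-b+1)}\tdeg^{b-k}$ inside $\KMCSmin_{a,b}$. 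When you assert that the resulting chain groups are ``the appropriately shifted $K(W_{b-k})$ appearing in the statement,'' you should make explicit that the shifts match after correcting the sign; otherwise your derivation and the proposition as printed do not literally agree.
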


The differential internal to each $K(W_k)$ will be denoted $\d^v$, 
and referred to as the \emph{vertical differential}.
The differential $\d^H$ will be referred to as the 
\emph{total horizontal differential}. 
In \S \ref{ss:filtration} below, we introduce an additional ``$s$-grading'' on 
$K(\MCSmin_{a,b})$ and decompose $\d^H$ further as $\d^H=\d^h + \d^c$ where 
$\d^h$ respects the $s$-grading and $\d^c$ strictly increases it. 
These differentials $\d^h$ and $\d^c$ will be called the 
\emph{horizontal differential} and the \emph{connecting differential},
respectively.

\subsection{The \texorpdfstring{$\zeta$}{zeta}-filtration}
\label{ss:filtration}

We now aim to filter the complex $\KMCSmin_{a,b}$ 
and explicitly identify the associated graded complex. 
To do so, we perform a change of basis within 
the exterior algebra tensor factor of each $K(W_k)$, 
i.e. we replace each \emph{column complex} $W_k\otimes
\largewedge[\xi_1,\ldots,\xi_b]$ by an isomorphic Koszul complex.    

\begin{definition}\label{def:zeta}
Let $\zeta_1^{(k)},\ldots,\zeta_b^{(k)}$ be odd variables given by the formula 
\[
\zeta_j^{(k)} := \sum_{i=1}^j (-1)^{i-1} e_{j-i}(\leftM^{(k)}) \otimes \xi_i.
\]
\end{definition}
Given this, equation \eqref{eq:HE2} implies that the formula
\[
\xi_i = \sum_{j=1}^i (-1)^{j-1}h_{i-j}(\leftM^{(k)}) \otimes \zeta_j^{(k)}
\]
recovers the variables $\xi_i$ from the $\zeta_j^{(k)}$.
We now wish to describe $\KMCSmin_{a,b}$ in terms of the $\zeta$-basis.

\begin{lemma}\label{lemma:zeta d2}
Consider the dg algebra $\Sym(\M|\M')\otimes \largewedge[\xi_1,\ldots,\xi_b]$ 
with $\Sym(\M|\M')$-linear derivation defined by $d(\xi_i)  = h_i(\M-\M')$ for all $1\leq i\leq b$. 
The elements $\zeta_j:=\sum_{i=1}^j (-1)^{i-1} e_{j-i}(\M) \otimes \xi_i$ satisfy
\[
d(\zeta_j) = e_j(\M) - e_j(\M').
\]
\end{lemma}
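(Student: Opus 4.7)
The plan is to compute $d(\zeta_j)$ directly from the definitions and then recognize the resulting sum as the right-hand side of one of the identities in Lemma \ref{lemma:somerelations1}.

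First I would apply the derivation $d$ term-by-term. Since $d$ is $\Sym(\M|\M')$-linear and $d(\xi_i) = h_i(\M - \M')$, we get
\[
d(\zeta_j) = \sum_{i=1}^j (-1)^{i-1} e_{j-i}(\M)\, d(\xi_i) = \sum_{i=1}^j (-1)^{i-1} e_{j-i}(\M)\, h_i(\M-\M').
\]
(There are no signs from the Koszul rule since $e_{j-i}(\M)$ has even parity.)

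Next I would invoke the first identity of Lemma \ref{lemma:somerelations1} with $\X = \M$ and $\X' = \M'$, which reads
\[
e_r(\M) - e_r(\M') = \sum_{i=1}^r (-1)^{i-1} e_{r-i}(\M)\, h_i(\M - \M').
\]
Setting $r = j$, this is exactly the expression computed above for $d(\zeta_j)$, yielding $d(\zeta_j) = e_j(\M) - e_j(\M')$ as claimed.

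The calculation is entirely mechanical once the relevant symmetric function identity is recognized; there is no real obstacle. The only point to watch is that $\zeta_j$ is written with the monomial in $e$'s on the left of $\xi_i$, which is the convention used in Remark \ref{rem:sign}, so no stray signs appear when $d$ passes through $e_{j-i}(\M)$.
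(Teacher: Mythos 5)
Your proof is correct and matches the paper's (one-line) argument: the paper simply says this is an immediate consequence of Lemma~\ref{lemma:somerelations1}, and you have written out exactly the computation that makes that citation precise, namely applying $d$ termwise and recognizing the resulting sum as the first identity in that lemma with $r=j$.
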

\begin{proof}
This is an immediate consequence of Lemma~\ref{lemma:somerelations1}.
\end{proof}


\begin{prop}\label{prop:diffKMCSexplicit}
We have that $K(W_k) \cong \tw_{\d^v}(W_k\otimes \largewedge[\zeta^{(k)}_1,\ldots,\zeta^{(k)}_b])$ where
\begin{equation}\label{eq:dv}
\d^v = \sum_{i=1}^k (e_{i_j}(\leftM^{(k)})-e_{i_j}({\rightM}^{(k)}))\otimes (\zeta_i^{(k)})^\ast \, .
\end{equation}
With respect to this isomorphism, 
the differential $\d^H\colon K(W_k)\rightarrow K(W_{k-1})$ has a nonzero component
\[
W_k \otimes \zeta^{(k)}_{i_1}\cdots \zeta^{(k)}_{i_r}
\xrightarrow{\d^H}
W_{k-1}\otimes \zeta^{(k-1)}_{j_1}\cdots \zeta^{(k-1)}_{j_r}
\]
if and only if $i_p - j_p\in \{0,1\}$ for all $1\leq p\leq r$.
In that case, it equals $\chi_m^+$ 
where $m=\sum_{p=1}^r (i_p-j_p)$.
\end{prop}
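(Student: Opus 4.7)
My plan is to establish the two claims separately, starting with the identification of $K(W_k)$ in the $\zeta^{(k)}$-basis and then computing the matrix coefficients of $\d^H$. For the first part, the key algebraic input is that the trivalent vertex relations in the edge ring of $W_k$ give $\leftX_2 = \leftM^{(k)} + \B^{(k)}$ at the upper merge and $\rightX_2 = \rightM^{(k)} + \B^{(k)}$ at the lower split, so that $\leftX_2 - \rightX_2 = \leftM^{(k)} - \rightM^{(k)}$ as formal alphabet combinations, and hence $h_i(\leftX_2 - \rightX_2) = h_i(\leftM^{(k)} - \rightM^{(k)})$ for every $i$. Applying Lemma~\ref{lemma:zeta d2} with $\M = \leftM^{(k)}$ and $\M' = \rightM^{(k)}$ then rewrites the Koszul differential in the new basis as the claimed $\d^v$, with terms for $j > k$ vanishing automatically since $|\leftM^{(k)}| = k = |\rightM^{(k)}|$.

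For the description of $\d^H$, I will use that in the original $\xi$-basis, the map $K(\chi_0^+)$ acts as $\chi_0^+ \otimes \Id_{\largewedge[\xi]}$, since the Koszul variables $\xi_i$ are defined universally and do not depend on $k$. The computational strategy is to pass to a formal splitter decomposition $\leftM^{(k)} = \leftM^{(k-1)} \sqcup \{m\}$ where $m$ is a single additional variable; since $|\{m\}| = 1$, this immediately yields $e_p(\leftM^{(k)}) = e_p(\leftM^{(k-1)}) + m \cdot e_{p-1}(\leftM^{(k-1)})$ and hence
\[
\zeta^{(k)}_j = \zeta^{(k-1)}_j + m \cdot \zeta^{(k-1)}_{j-1} \, .
\]
Expanding the exterior product $\zeta^{(k)}_{i_1} \wedge \cdots \wedge \zeta^{(k)}_{i_r}$ via this substitution gives a sum indexed by subsets $S \subseteq \{1,\ldots,r\}$ with coefficient $m^{|S|}$ multiplying $\prod_p \zeta^{(k-1)}_{j_p^S}$, where $j_p^S = i_p - 1$ for $p \in S$ and $j_p^S = i_p$ otherwise. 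The exterior algebra then annihilates exactly those terms where the indices fail to be strictly increasing, which matches the condition $i_p - j_p \in \{0,1\}$ in the proposition. Applying $K(\chi_0^+)$ and using the dot-absorption identity $m^s \cdot \chi_0^+ = \chi_s^+$ (which encodes the fact that powers of the ``extra'' variable on the $\F$-strand migrate through the cap to become dots on it, cf.~\eqref{eq:chi plus} and Remark~\ref{rem:Decoration}) will produce the claimed matrix coefficient $\chi_m^+$ with $m = |S| = \sum_p (i_p - j_p)$.

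The main obstacle I anticipate is rigorously justifying the formal splitter manipulation $\leftM^{(k)} = \leftM^{(k-1)} \sqcup \{m\}$ together with the dot-absorption identity $m^s \cdot \chi_0^+ = \chi_s^+$ for $s \geq 2$. Individual dot-migration steps are standard in the extended graphical calculus of \cite{KLMS}, but neither $\leftM^{(k-1)}$ nor $\{m\}$ is literally an alphabet on $W_k$; rather, this decomposition arises from pulling back along a splitter morphism $\F^{(k)} \to \F^{(k-1)} \F$, so the argument needs careful invocation of the splitter conventions together with consistent sign tracking to verify the coefficient identity. I expect that once the $r=1$ case $K(\chi_0^+)(w \otimes \zeta^{(k)}_j) = \chi_0^+(w) \otimes \zeta^{(k-1)}_j + \chi_1^+(w) \otimes \zeta^{(k-1)}_{j-1}$ is established via a direct dot-migration computation, the general case will follow by multilinear expansion as above.
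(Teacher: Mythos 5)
Your proposal is correct and relies on the same underlying graphical/foam identity as the paper, namely the decoration-slide across the thin cap in $\chi_0^+$, which in your language is exactly the formal decomposition $\leftM^{(k)} = \leftM^{(k-1)} + m$ with $m$ the dot on the cap (equivalently $\chi_0^+ \circ e_p(\leftM^{(k)}) = e_p(\leftM^{(k-1)})\circ \chi_0^+ + e_{p-1}(\leftM^{(k-1)})\circ \chi_1^+$). The organizational difference: the paper computes the matrix coefficient by an explicit double change of basis $\zeta^{(k)} \to \xi \to \zeta^{(k-1)}$, arriving at the intermediate sum $\sum_{l_1,\ldots,l_r} (-1)^{\sum(l_p-j_p)} \prod_p h_{l_p-j_p}(\leftM^{(k-1)}) e_{i_p-l_p}(\leftM^{(k)})$ which collapses via the generating-function identity $H(\X,-t)E(\X+z,t) = 1+zt$; you absorb this $l$-sum into the single recursion $\zeta^{(k)}_j = \zeta^{(k-1)}_j + m\,\zeta^{(k-1)}_{j-1}$ and then expand multilinearly over subsets $S$. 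These are the same computation packaged differently, and your version is arguably a bit cleaner for the multilinear bookkeeping. The gap you flag --- rigorously justifying the dot-migration/splitter identity --- is genuine but applies equally to the published argument, which also leaves this as an unspelled-out diagrammatic step; the $r=1$ case you identify is indeed what needs to be checked. One small point worth a sentence in your expansion argument: you should note that for $i_1 < \cdots < i_r$ and $j^S_p \in \{i_p-1, i_p\}$ the sequence $(j^S_p)$ is automatically weakly increasing, so no hidden Koszul signs arise when re-expressing the surviving terms in the standard (strictly increasing) monomial basis; the vanishing of repeated-index terms is exactly what produces the $i_p - j_p \in \{0,1\}$ condition on the surviving components.
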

\begin{proof}
The first statement is immediate from Lemma \ref{lemma:zeta d2}. 

For the second, recall that the components of $\d^H$ are described in the $\xi$-basis by
\begin{equation}\label{eq:std map}
\chi^{+}_{0} |_{W_k} \otimes \Id 
=
\CQGsgn{(-1)^{b-k}}
\begin{tikzpicture}[baseline=0em]
	\draw[CQG,ultra thick,<-] (0,-.5) node[below]{\scriptsize$k$} node[above=3.2em]{\scriptsize $k{-}1$} to (0,.7);
	\draw[CQG,ultra thick,->] (.75,-.5) node[below]{\scriptsize$k$} node[above=3.2em]{\scriptsize $k{-}1$}to (.75,.7);
	\draw[CQG,thick, directed=.75] (.75,0) to [out=90,in=90]  (0,0);
\end{tikzpicture}
\otimes \Id \colon 
W_k\otimes \largewedge[\xi_1,\ldots,\xi_b]
\longrightarrow
W_{k-1}\otimes \largewedge[\xi_1,\ldots,\xi_b] \, .
\end{equation}
We now compute these components under 
a basis change to monomials in the variables $\zeta^{(k)}_i$ and
$\zeta^{(k-1)}_i$ in the domain and co-domain, respectively. In the domain, the
requisite basis change is given by maps
\[
W_k\otimes \zeta^{(k)}_{i_1}\cdots \zeta^{(k)}_{i_r} \to \bigoplus_{l_1,\ldots,l_r} W_k \otimes \xi_{l_1}\cdots \xi_{l_r}
\]
with components
\[
(-1)^{l_1+\cdots+l_r-r} \prod_{p=1}^r e_{i_p - l_p}(\M^{(k)}) \, .
\]
Note that these are non-zero only if $1\leq l_p\leq i_p$ for all $1\leq p\leq r$. 
Next, each $W_k \otimes \xi_{l_1}\cdots \xi_{l_r}$ maps to 
$W_{k-1}\otimes \xi_{l_1}\cdots \xi_{l_r}$ 
via $\chi^+_0\otimes \Id$. 
Finally, the basis change in the codomain is given by maps
\[
W_{k-1}\otimes \xi_{l_1}\cdots \xi_{l_r} \to \bigoplus_{j_1,\ldots,j_r} W_{k-1}\otimes \zeta^{(k-1)}_{j_1}\cdots \zeta^{(k-1)}_{j_r}
\]
with components
\[
(-1)^{j_1+\cdots+j_r-r} \prod_{p=1}^r h_{l_p - j_p}(\M^{(k-1)}) \, .
\]
As before, this is non-zero only if $1\leq j_p\leq l_p$ for all $1\leq p \leq r$.
Thus, the component of $\d^H$ from $W_k\otimes \zeta^{(k)}_{i_1}\cdots \zeta^{(k)}_{i_r}$ to 
$W_{k-1}\otimes \zeta^{(k-1)}_{j_1}\cdots \zeta^{(k-1)}_{j_r}$
is:
\[
\begin{aligned}
\CQGsgn{(-1)^{b-k}}
\sum_{l_1,\ldots,l_r} (-1)^{\sum_{p=1}^r l_p - j_p} \; 
\begin{tikzpicture}[baseline=0em]
	\draw[CQG,ultra thick,<-] (0,-1) node[below]{\scriptsize$k$} to node[black,yshift=15]{$\CQGbox{\Pi_{p=1}^r h_{l_p-j_p}}$} 
		node[black,yshift=-20pt]{$\CQGbox{\Pi_{p=1}^r e_{i_p-l_p}}$} (0,1.5) node[above]{\scriptsize$k{-}1$};
	\draw[CQG,ultra thick,->] (1.5,-1) node[below]{\scriptsize$k$} to (1.5,1.5) node[above]{\scriptsize$k{-}1$};
	\draw[CQG,thick, directed=.55] (1.5,0) to [out=90,in=90] (0,0);
\end{tikzpicture}
&=
\CQGsgn{(-1)^{b-k}}
\begin{tikzpicture}[baseline=0em]
	\draw[CQG,ultra thick,<-] (0,-1) node[below]{\scriptsize$k$} to (0,1.2) node[above]{\scriptsize $k{-}1$} ;
	\draw[CQG,ultra thick,->] (3,-1) node[below]{\scriptsize$k$} to (3,1.2) node[above]{\scriptsize $k{-}1$};
	\draw[CQG,thick, directed=.85] (3,0) to [out=90,in=90] node[black,yshift=-10pt]{$\CQGbox{\Pi_{p=1}^r e_{i_p-j_p}}$} (0,0);
\end{tikzpicture} \\
&=
\begin{cases}
\chi_m^+ & \text{if } i_p-j_p\in \{0,1\} \text{ for all } 1\leq p\leq r \\
0 & \text{else}
\end{cases}
\end{aligned}
\]
where here $m=\sum_{p=1}^r (i_p-j_p)$.  This gives the description of $\d^H$ from the statement.
\end{proof}

The fact that $e_i(\M^{(k)})-e_i(\M'^{(k)})$ is zero when $i>k$ 
suggests that we should treat the variables $\zeta^{(k)}_i$
differently according to whether $i\leq k$ or $i>k$.  
The following definition emphasizes this distinction.  

\begin{definition}\label{def:Pkls}
Set
$
P_{k,l,s} :=
\qdeg^{k(a-b+1)-2b} \tdeg^{2b-k} 
W_k
\otimes \largewedge^{l}[\zeta^{(k)}_1,\ldots, \zeta^{(k)}_k]\otimes \largewedge^s[\zeta^{(k)}_{k+1},\ldots, \zeta^{(k)}_b]
$.
\end{definition}

From this point forward, 
we will work with the shifted Koszul complex $\qdeg^{b(a-b-1)}\tdeg^b\KMCSmin_{a,b}$.
The shift is conventional, but will guarantee that a quotient of this complex is 
homotopy equivalent to the Rickard complex of the $(a,b)$-colored full twist braid.
Definition \ref{def:Pkls} now allows us to filter $\qdeg^{b(a-b-1)}\tdeg^b\KMCSmin_{a,b}$ 
as follows.

\begin{proposition}\label{prop:KMCSdiffs}
We have
\[
\qdeg^{b(a-b-1)}\tdeg^b\KMCSmin_{a,b} \cong  
\tw_{\d^v+\d^h+\d^c}\left(\bigoplus_{0\leq l\leq k\leq b-s} P_{k,l,s}\right) \, ,
\]
where $\d^v$, $\d^h$, $\d^c$ are pairwise anti-commuting differentials given as
follows:
\begin{itemize}
\item the \emph{vertical differential} $\d^v \colon P_{k,l,s}\rightarrow P_{k,l-1,s}$
is the direct sum of the Koszul differentials, 
up to sign $(-1)^k$; its component
\[
W_k \otimes \zeta^{(k)}_{i_1}\cdots \zeta^{(k)}_{i_r} \xrightarrow{\d^v}
W_{k}\otimes \zeta^{(k)}_{i_1}\cdots \widehat{\zeta^{(k)}_{i_j}}\cdots
\zeta^{(k)}_{i_r}
\]
is $(-1)^{-k+j-1} (e_{i_j}(\leftM^{(k)})-e_{i_j}({\rightM}^{(k)}))$ if $1\leq i_j\leq k$
(and all other components are zero). 
\item  the \emph{horizontal differential} $\d^h$ and the connecting differential
$\d^c$ are uniquely characterized by $\d^h+\d^c=\d^H$ from Proposition
\ref{prop:diffKMCSexplicit}, together with
\[
\d^h(P_{k,l,s})\subset P_{k-1,l,s} \, , \quad \d^c(P_{k,l,s})\subset P_{k-1,l-1,s+1}.
\]
\end{itemize}
\end{proposition}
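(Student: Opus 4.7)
The plan is to perform the change of basis $\xi_i \mapsto \zeta^{(k)}_i$ (Definition~\ref{def:zeta}) within each column complex $K(W_k)$ of $\KMCSmin_{a,b}$ and then regroup the resulting chain groups by the trigrading $(k,l,s)$. First I would recall from Proposition~\ref{prop:KMCS 1} that
\[
\KMCSmin_{a,b} = \bigoplus_{k=0}^b \qdeg^{(b-k)(a-b+1)}\tdeg^{b-k}\,K(W_k)
\]
with horizontal component $\d^H = K(\chi_0^+)\colon K(W_k)\to K(W_{k-1})$. After incorporating the overall shift $\qdeg^{b(a-b-1)}\tdeg^b$ and using $\wt(\zeta^{(k)}_i) = \wt(\xi_i) = \qdeg^{2i}\tdeg^{-1}$, the summand $W_k \otimes \zeta^{(k)}_{i_1}\cdots \zeta^{(k)}_{i_r}$ carries exactly the bigrading shift recorded in Definition~\ref{def:Pkls}, so splitting by $l := \#\{p : i_p \leq k\}$ and $s := \#\{p : i_p > k\}$ produces precisely the chain groups $\{P_{k,l,s}\}_{0\leq l\leq k\leq b-s}$.

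Next I would read off the differentials in the $\zeta$-basis directly from Proposition~\ref{prop:diffKMCSexplicit}. The internal Koszul differential of $K(W_k)$ becomes
\[
\sum_{i=1}^k (e_i(\leftM^{(k)}) - e_i(\rightM^{(k)})) \otimes (\zeta^{(k)}_i)^\ast,
\]
whose sum terminates at $i=k$ since $e_i(\leftM^{(k)}) - e_i(\rightM^{(k)}) = 0$ for $i > k$; on a monomial $\zeta^{(k)}_{i_1}\cdots \zeta^{(k)}_{i_r}$ the derivation $(\zeta^{(k)}_{i_j})^\ast$ contributes the Koszul sign $(-1)^{j-1}$, while the Koszul sign rule for the tensor product presentation
\[
\KMCSmin_{a,b} \cong \MCSmin_{a,b} \otimes_{\Sym(\leftX_2|\rightX_2)} \tw_{\sum h_i(\leftX_2-\rightX_2)\otimes\xi_i^\ast}\big(\Sym(\leftX_2|\rightX_2)\otimes\largewedge[\xi_1,\ldots,\xi_b]\big)
\]
from Lemma~\ref{lemma:K is dg fun} (combined with the overall $\tdeg^b$ shift) contributes a further factor tracked by $(-1)^{-k}$. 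Together these reproduce the overall sign $(-1)^{-k+j-1}$ in the statement, and it is manifest that $\d^v(P_{k,l,s}) \subseteq P_{k,l-1,s}$.

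The decomposition $\d^H = \d^h + \d^c$ then comes from analyzing how the allowed nonzero components of $\d^H$ shift the trigrading. In the component
\[
W_k\otimes \zeta^{(k)}_{i_1}\cdots\zeta^{(k)}_{i_r} \;\longrightarrow\; W_{k-1}\otimes \zeta^{(k-1)}_{j_1}\cdots\zeta^{(k-1)}_{j_r}
\]
of Proposition~\ref{prop:diffKMCSexplicit}, the constraint $i_p-j_p \in \{0,1\}$ forces $j_p \leq k-1$ whenever $i_p \leq k-1$, and $j_p \geq k > k-1$ whenever $i_p \geq k+1$; only the boundary index $i_p = k$ (which is unique in the strictly increasing tuple, if present at all) has a genuine choice between $j_p = k-1$ and $j_p = k$. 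The first alternative preserves $s$ and assembles into $\d^h\colon P_{k,l,s}\to P_{k-1,l,s}$, while the second decreases $l$ by one and increases $s$ by one, assembling into $\d^c\colon P_{k,l,s}\to P_{k-1,l-1,s+1}$.

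Finally, pairwise anti-commutation of $\d^v, \d^h, \d^c$ is automatic and requires no further computation: $\d^v + \d^h + \d^c$ squares to zero on $\KMCSmin_{a,b}$, and the six components $(\d^v)^2$, $(\d^h)^2$, $(\d^c)^2$, $\d^v\d^h + \d^h\d^v$, $\d^v\d^c + \d^c\d^v$, $\d^h\d^c + \d^c\d^h$ each shift the trigrading $(k,l,s)$ by distinct vectors, hence must vanish summand by summand. The main obstacle in the plan is the careful bookkeeping of the $\qdeg$- and $\tdeg$-shifts against Definition~\ref{def:Pkls} and the verification that the Koszul sign $(-1)^{-k}$ on $\d^v$ matches the chosen tensor product conventions; all other steps reduce to direct applications of Proposition~\ref{prop:diffKMCSexplicit}.
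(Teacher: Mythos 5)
Your proposal is correct and follows essentially the same route as the paper's proof: identify the chain groups via the $\zeta$-basis change and the trigrading by $(k,l,s)$, read off the differentials from Proposition~\ref{prop:diffKMCSexplicit}, and deduce pairwise anti-commutation by isolating the trihomogeneous components of $(\d^v+\d^h+\d^c)^2 = 0$. The one cosmetic divergence is in the last step: the paper passes to the auxiliary trigrading $(l+s,\,k+s,\,-s)$, under which $\d^v,\d^h,\d^c$ become the three coordinate-shift differentials $(-1,0,0),(0,-1,0),(0,0,-1)$, and observes that the six second-order terms then live in distinct tridegrees; you instead work directly with $(k,l,s)$ and verify by hand that the six tridegree shifts $(0,-2,0),(-2,0,0),(-2,-2,2),(-1,-1,0),(-1,-2,1),(-2,-1,1)$ are pairwise distinct. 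Both checks are equally quick. Your discussion of how the constraint $i_p - j_p\in\{0,1\}$ pins the boundary index $i_p=k$ (thereby producing exactly the two targets $P_{k-1,l,s}$ and $P_{k-1,l-1,s+1}$) makes explicit a point the paper compresses into a single sentence and is a welcome elaboration. The sign bookkeeping you flag as the main obstacle is indeed the delicate part; note that $(-1)^{-k}=(-1)^k$ and that the homological placement of $W_k$ in the shifted complex is $2b-k$ (so the Koszul sign from Remark~\ref{rem:sign} contributes $(-1)^{2b-k}=(-1)^k$), which matches the paper's convention.
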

That is, $\d^h$ is the part of $\d^H$ which preserves the 
$s$-degree and $\d^c$ is the part of $\d^H$ which increases $s$-degree by $1$.


\begin{remark}
Since each $\zeta^{(k)}_i$ carries cohomological degree $-1$, the object
$P_{k,l,s}$ contributes to the cohomological degree $2b-k-l-s$ part of
$\qdeg^{b(a-b-1)}\tdeg^b\KMCSmin_{a,b}$.
\end{remark}

\begin{proof}[Proof of Proposition \ref{prop:KMCSdiffs}]
By construction, the complex $\qdeg^{b(a-b-1)}\tdeg^b\KMCSmin_{a,b}$ from
Definition \ref{def:MCS and KMCS} is isomorphic to $\bigoplus_{k,l,s}P_{k,l,s}$
with differential $\d^v+\d^H$ as in Proposition \ref{prop:diffKMCSexplicit}.
It is immediate from \eqref{eq:dv} that $\d^v$ maps $P_{k,l,s}$ to $P_{k,l-1,s}$.
It follows from Definition \ref{def:Pkls} and the characterization of the non-zero components 
of $\d^H$ in Proposition \ref{prop:diffKMCSexplicit} that
$\d^H$ maps $P_{k,l,s}$ to $P_{k-1,l,s}\oplus P_{k-1,l-1,s+1}$.
Hence $\d^h$ and $\d^c$ are well-defined.

The desired relations concerning $\d^v,\d^h,\d^c$ follow from taking components
of $(\d^v+\d^h+\d^c)^2=0$ under the trigrading $(l+s,k+s,-s)$.
(This uses the fact that $\d^v,\d^h$, and $\d^c$ have
tridegrees $(-1,0,0),(0,-1,0)$, and $(0,0,-1)$ with respect to this trigrading.) 
\end{proof}

An instructive example of the complex $\qdeg^{b(a-b-1)}\tdeg^b\KMCSmin_{a,b}$
showing the three types of differentials is given in the following.
\begin{exa}
	\label{exa:KMCS} 
We illustrate the complex $\qdeg^{-2}\tdeg^2\KMCSmin_{2,2}$, as well as the subquotients
$P_{\bullet,\bullet,s}=\qdeg^{s} \tdeg^s \MCCSmin^s_{2,2}$ for $0\leq s\leq 2$.
We use the symbol $\cdot$ instead of $\otimes$ to declutter the diagram. 
We also suppress the homological shifts $\tdeg^k$, which are determined by 
placing the underlined term in the top left in homological degree zero 
(and noting that all arrows increase homological degree by one).
\[\!\!\!\!\!
	\begin{tikzpicture}[anchorbase]
		\draw[dotted]  (2,3.75) to (2,3) to [out=270,in=180] (3.25,1.75) to (7,1.75);
		\draw[dotted] (-2,3.75) to (-2,3) to [out=270,in=180] (2,-1) to (7,-1);
		\node[scale=1] at (-4.75,3.75){$P_{\bullet,\bullet,0}$};
		\node[scale=1] at (0,3.75){$\BLUE{P_{\bullet,\bullet,1}}$};
		\node[scale=1] at (4.75,3.75){$\GREEN{P_{\bullet,\bullet,2}}$};
		\node[scale=1] at (0,0.5){
\begin{tikzcd}[row sep=3em,column sep=-3.2em]
& 
\qdeg^{-2} \underline{W_2\cdot \zeta^{(2)}_1\zeta^{(2)}_2 \cdot 1}
	\arrow[ddl, "e_2'-e_2", shift left] 
	\arrow[dr,"e_1-e_1'", shift left]
	\arrow[rrr,gray, "\GRAY{\chi^+_0}"] & & & 
\BLUE{\qdeg^{-3}W_1\cdot\zeta^{(1)}_1 \cdot \zeta^{(1)}_2}
	\arrow[dr,blue,"\BLUE{e_1'-e_1}", shift left] 
	\arrow[rrr,gray, "\GRAY{\chi^+_0}"] & & & 
\GREEN{\qdeg^{-4}W_0\cdot 1 \cdot \zeta^{(0)}_1\zeta^{(0)}_2}	
 & 
\\
& &
\qdeg^{-2}W_2\cdot \zeta^{(2)}_2 \cdot  1	
	\arrow[ldd, "e_2-e_2'" near end, shift left]
	\arrow[dr,, "\chi^+_1"] 
	\arrow[rrr,gray, "\GRAY{\chi^+_0}" near start] & & & 
\BLUE{\qdeg^{-3}W_1\cdot 1 \cdot \zeta^{(1)}_2}  
	\arrow[dr,blue,"\BLUE{\chi^+_1}"] 
	\arrow[rrr,blue, "\BLUE{\chi^+_0}" near end]  & & & 
\BLUE{\qdeg^{-4}W_0\cdot 1 \cdot \zeta^{(0)}_2}  
\\
\qdeg^{-2}W_2\cdot \zeta^{(2)}_1 \cdot 1	
	\arrow[dr,"e_1-e_1'", shift left]
	\arrow[rrr,crossing over, "\chi^+_0" near start]  & & & 
\qdeg^{-3}W_1\cdot \zeta^{(1)}_1 \cdot 1	
	\arrow[dr,"e_1'-e_1", shift left]
	\arrow[rrr,crossing over,gray, "\GRAY{\chi^+_0}" near start] 
	 & & & 
\BLUE{\qdeg^{-4}W_0\cdot 1 \cdot \zeta^{(0)}_1}	
	 & & 
\\
&
 \qdeg^{-2}W_2 \cdot 1 \cdot 1 
 	\arrow[rrr, "\chi^+_0"]& & & 
\qdeg^{-3}W_1 \cdot 1\cdot 1  
	\arrow[rrr, "\chi^+_0"]& & & 
\qdeg^{-4} W_0 \cdot 1 \cdot 1	 &
\end{tikzcd}
};
\draw[thick,decoration={brace,mirror,raise=0.5cm},decorate] (-7,-2) to (-3,-2);
\node at (-5,-3) {$P_{2,\bullet,\bullet}$};
\draw[thick,decoration={brace,mirror,raise=0.5cm},decorate] (-2.5,-2) to (2.5,-2);
\node at (0,-3) {$P_{1,\bullet,\bullet}$};
\draw[thick,decoration={brace,mirror,raise=0.5cm},decorate] (3,-2) to (7,-2);
\node at (5,-3) {$P_{0,\bullet,\bullet}$};
\end{tikzpicture}
\]

Black and blue horizontal arrows correspond to components of $\d^h$. 
All other black and blue arrows indicate non-zero components of $\d^v$. 
The connecting differential $\d^c$ is depicted by the grey horizontal arrows. 
\end{exa}

We may regard $\qdeg^{b(a-b-1)}\tdeg^b\KMCSmin_{a,b}$ as filtered by $s$-degree, since
the differentials $\d^v$ and $\d^h$ preserve $s$-degree, 
while $\d^c$ increases $s$-degree by one.  The following gives names to the
subquotients with respect to this filtration.

\begin{definition}\label{def:Qs as subquotient}
For each $0\leq s\leq b$, let 
\[
\MCCSmin^s_{a,b}:= 
\qdeg^{-s(b-1)}\tdeg^{-s} \tw_{\d^v+\d^h}
\left(\bigoplus_{0\leq l\leq k\leq b-s}P_{k,l,s}\right) \, .
\]
\end{definition}

Given this, the complex $\qdeg^{b(a-b-1)}\tdeg^b\KMCSmin_{a,b}$ from Definition
\ref{def:MCS and KMCS} can be described as the one-sided twisted complex
\begin{equation}\label{eq:MainThmMin}
\begin{aligned}
	\qdeg^{b(a-b-1)}\tdeg^b\KMCSmin_{a,b}
&=
\left(\MCCSmin^0_{a,b} \xrightarrow{\d^c}
\qdeg^{b-1}\tdeg\MCCSmin^1_{a,b}  
\xrightarrow{\d^c} \cdots  \xrightarrow{\d^c} 
\qdeg^{b(b-1)} \tdeg^{b} \MCCSmin^b_{a,b}\right) \\
&= \tw_{\d^c}
\left(\bigoplus_{s=0}^b \qdeg^{s(b-1)} \tdeg^s \MCCSmin_{a,b}^s \right) \, .
\end{aligned}
\end{equation}

Our ultimate goal is to show that $\MCCSmin_{a,b}^s$ is homotopy equivalent 
to the complex $\MCCS_{a,b}^s$ from Definition \ref{def:MCS and KMCS}.
For this, we need one more technical result, namely that any partially symmetric
function of the form $f(\leftX_2)-f(\rightX_2)$ acts null-homotopically on
$\qdeg^{b(a-b-1)}\tdeg^b\KMCSmin_{a,b}$ and its subquotients $\MCCSmin^s_{a,b}$.

\begin{definition}\label{def:Theta} For each $r \in \{1,\ldots,b\}$, let
$\Theta_r \in \End^{2r,-1}(\qdeg^{b(a-b-1)}\tdeg^b\KMCSmin_{a,b})$ be given by 
\[
\Theta_r := \bigoplus_{k=0}^b (-1)^{b-k} \Id_{W_k} \otimes \xi_r \, .
\]
\end{definition}

Since $\Theta_r \colon P_{k,l,s} \to P_{k,l+1,s} \oplus P_{k,l,s+1}$, 
we have the decomposition $\Theta_r = \Theta^v_r + \Theta^c_r$, 
where $\Theta^v_r$ and $\Theta^c_r$ are uniquely characterized by
\begin{enumerate}
\item $\Theta^v_r$ restricts to morphisms $P_{k,l,s}\rightarrow P_{k,l+1,s}$, and
\item $\Theta^c_r$ restricts to morphisms $P_{k,l,s}\rightarrow P_{k,l,s+1}$.
\end{enumerate}

\begin{prop}
	\label{prop:trivMCCS}
The element $\Theta_r^v \in \End^{2r,-1}(\MCCSmin_{a,b}^s)$ satisfies 
$[\d^v+\d^h, \Theta_r^v] = h_r(\leftX_2 - \rightX_2)$ for all $0 \leq s \leq b$.
\end{prop}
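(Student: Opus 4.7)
The plan is to first establish a stronger identity on the full Koszul complex $\qdeg^{b(a-b-1)}\tdeg^b \KMCSmin_{a,b}$ and then extract the statement by separating components under the tridegree of Proposition~\ref{prop:KMCSdiffs}.

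The first step will be to show that on the full complex, with total differential $\d^K := \d^v + \d^h + \d^c$, the operator $\Theta_r$ satisfies
\[
[\d^K, \Theta_r] = h_r(\leftX_2 - \rightX_2) \cdot \Id.
\]
This is essentially the defining nullhomotopy property of the Koszul complex $K(\MCSmin_{a,b}) = \MCSmin_{a,b}\otimes \largewedge[\xi_1,\dots,\xi_b]$ from Definition~\ref{def:Koszul cx}: the sign $(-1)^{b-k}$ in Definition~\ref{def:Theta} is precisely the Koszul tensor-product sign that identifies $\Theta_r$ with the operator $\Id\otimes\mu_{\xi_r}$ on $\MCSmin_{a,b}\otimes \largewedge$ (reflecting the cohomological degree of $W_k$ inside $\MCSmin_{a,b}$). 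With this identification, the supercommutator of $\Theta_r$ with the $\d_{\MCSmin}$-part of $\d^K$ vanishes formally by Koszul signs for tensor products of operators, while the supercommutator with the twist $\sum_i h_i(\leftX_2-\rightX_2)\otimes \xi_i^\ast$ collapses via the Clifford-type relation $\{\xi_i^\ast,\mu_{\xi_r}\}=\delta_{ir}\Id$ to $h_r(\leftX_2-\rightX_2)\cdot\Id$.

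The second step will be to decompose this identity using $\d^K = \d^v + \d^h + \d^c$ and $\Theta_r = \Theta_r^v + \Theta_r^c$ and to separate components by the tridegree $(l+s, k+s, -s)$ of Proposition~\ref{prop:KMCSdiffs}. Under this tridegree, $\d^v,\d^h,\d^c$ carry tridegrees $(-1,0,0),(0,-1,0),(0,0,-1)$, while $\Theta_r^v,\Theta_r^c$ carry tridegrees $(1,0,0)$ and $(1,1,-1)$. The six resulting supercommutators therefore lie in pairwise distinct tridegrees, with the single exception that $[\d^h,\Theta_r^c]$ and $[\d^c,\Theta_r^v]$ both have tridegree $(1,0,-1)$. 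In particular, $[\d^v,\Theta_r^v]$ is the unique term of tridegree $(0,0,0)$ and $[\d^h,\Theta_r^v]$ is the unique term of tridegree $(1,-1,0)$. Since $h_r(\leftX_2-\rightX_2)\cdot\Id$ lies entirely in tridegree $(0,0,0)$, matching components of the displayed identity forces
\[
[\d^v,\Theta_r^v] = h_r(\leftX_2-\rightX_2)\cdot\Id \qquad \text{and} \qquad [\d^h,\Theta_r^v] = 0.
\]

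Adding these yields $[\d^v+\d^h,\Theta_r^v] = h_r(\leftX_2-\rightX_2)\cdot\Id$ on the full complex; since both $\d^v+\d^h$ and $\Theta_r^v$ preserve the $s$-grading, this restricts to each $\End(\MCCSmin_{a,b}^s)$, as claimed. The real work will be in the first step---carefully tracking Koszul signs to confirm that the conventions of Definitions~\ref{def:Koszul cx} and~\ref{def:Theta} are set up so that the nullhomotopy identity holds with no residual sign---once this is in place, the tridegree separation of the second step is purely formal.
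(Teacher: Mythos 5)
Your proposal is correct and takes essentially the same approach as the paper. The paper packages the first step slightly differently, establishing the two identities $[\d^v,\Theta_r] = h_r(\leftX_2-\rightX_2)$ (from the Clifford relation) and $[\d^h+\d^c,\Theta_r]=0$ (since $\d^h+\d^c=\d^H$ acts only on the $\MCSmin$ tensor factor) separately rather than as the combined supercommutator $[\d^K,\Theta_r]=h_r$, and then extracts the conclusion by taking $s$-preserving components; your tridegree separation is a finer version of the same bookkeeping, though as you could also observe, $s$-degree separation applied to the combined identity already yields $[\d^v+\d^h,\Theta_r^v]=h_r$ directly.
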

\begin{proof}
Definition \ref{def:Koszul cx} and Definition \ref{def:Theta} directly
imply that
\[
[\d^v,\Theta^v_r+\Theta^c_r] = h_r(\leftX_2-\rightX_2)
\quad \text{and} \quad
[\d^h+\d^c, \Theta^v_r+\Theta^c_r] = 0 \, .
\]
Taking the components that preserve $s$-degree gives
$[\d^v,\Theta^v_r] = h_r(\leftX_2-\rightX_2)$
and $[\d^h,\Theta^v_r]=0$.
\end{proof}

\subsection{The colored 2-strand full twist}
\label{ss:ft computation}
In this section, we prove that $\MCCS_{a,b}^0 \simeq \MCCSmin_{a,b}^0$.  
This gives an explicit model for the Rickard complex of the $(a,b)$-colored 
$2$-strand full twist.
This result is of independent interest, but will also serve as an ingredient in proving
that $\MCCS_{a,b}^s\simeq \MCCSmin_{a,b}^s$ for all $0 \leq s \leq b$ below in 
Corollary \ref{cor:MCCSmin topological}.

We visualize the main object of study 
$\MCCSmin_{a,b}^0 = \bigoplus_{0\leq l\leq k\leq b} P_{k,l,0}$, 
with its two anti-commuting differentials $\d^v,\d^h$, 
as the following double complex
\begin{equation}\label{eq:Q0 as bicomplex}
\begin{tikzpicture}[anchorbase]
\node (aa) at (0,0) {$P_{0,0,0}$};
\node (ba) at (-2,0) {$P_{1,0,0}$};
\node (ca) at (-4,0) {$P_{2,0,0}$};
\node (da) at (-6,0) {$P_{3,0,0}$};
\node (ea) at (-8,0) {$\cdots$};
\node (bb) at (-2,2) {$P_{1,1,0}$};
\node (cb) at (-4,2) {$P_{2,1,0}$};
\node (db) at (-6,2) {$P_{3,1,0}$};
\node (eb) at (-8,2) {$\cdots$};
\node (cc) at (-4,4) {$P_{2,2,0}$};
\node (dc) at (-6,4) {$P_{3,2,0}$};
\node (ec) at (-8,4) {$\cdots$};
\node (dd) at (-6,6) {$P_{3,3,0}$};
\node (ed) at (-8,6) {$\cdots$};
\node at (-7,7) {$\ddots$};
\path[->,>=stealth,shorten >=1pt,auto,node distance=1.8cm]
%
%
(ea) edge node {$\d^h$} (da)
(da) edge node {$\d^h$} (ca)
(ca) edge node {$\d^h$} (ba)
(ba) edge node {$\d^h$} (aa)
(eb) edge node {$\d^h$} (db)
(db) edge node {$\d^h$} (cb)
(cb) edge node {$\d^h$} (bb)
(ec) edge node {$\d^h$} (dc)
(dc) edge node {$\d^h$} (cc)
(ed) edge node {$\d^h$} (dd)
%
%
([xshift=-2pt] dd.south) edge node[left]  {$\d^v$} ([xshift=-2pt] dc.north)
([xshift=-2pt] dc.south) edge node[left]  {$\d^v$} ([xshift=-2pt] db.north)
([xshift=-2pt] db.south) edge node[left]  {$\d^v$} ([xshift=-2pt] da.north)
([xshift=-2pt] cc.south) edge node[left]  {$\d^v$} ([xshift=-2pt] cb.north)
([xshift=-2pt] cb.south) edge node[left]  {$\d^v$} ([xshift=-2pt] ca.north)
([xshift=-2pt] bb.south) edge node[left] {$\d^v$} ([xshift=-2pt] ba.north);
\end{tikzpicture}
\end{equation}

\begin{rem}\label{rem:BH} 
Up to grading shift, this double complex is isomorphic to 
the image of the categorical inverse ribbon element
$\mathbf{r}\inv\oone_{a-b}$ of quantum $\slnn{2}$, as defined by
Beliakova--Habiro \cite{BH}, under the $2$-functor $\Phi$ to singular Soergel bimodules. 
More precisely, the version of the double complex considered here 
has vertical differentials modeled on differences of elementary
symmetric polynomials, corresponding to the version
$\tilde{\mathbf{r}}\inv\oone_{a-b}$ from \cite[Section 11]{BH}. 
The original version $\mathbf{r}\inv\oone_{a-b}$ of the inverse ribbon complex 
defined in \cite[Section 4]{BH} uses differentials modeled on complete symmetric
polynomials in a difference of alphabets and is closer to $\MCCSmin^0$ expressed
in terms of the the exterior algebra generators $\xi_i$. Also note that the
notions of horizontal and vertical differentials are interchanged between this
paper and \cite{BH}.
\end{rem} 

It will be convenient to give special notation to the rows of the double
complex $\MCCSmin^0$.

\begin{definition}\label{def:Q0 rows}
Let $R_l$ denote the complex $(\bigoplus_{k=l}^b P_{k,l,0}, \d^h)$.
\end{definition}

By construction, we have 
$\MCCSmin^0 = \tw_{\d^v}(\bigoplus_{l=0}^b R_l)$. 
The key to proving $\MCCS^0\simeq \MCCSmin^0$ is the following
topological interpretation of the rows $R_l$.  

\begin{prop}\label{prop:interpretation of Rl0}
For $0\leq l \leq b$ we have
\[
R_l \; \simeq \; \qdeg^{-(b-l)} \tdeg^{b-l} \F^{(l)}\E^{(a-b+l)} \hComp C_{a,b} = 
\qdeg^{-(b-l)} \tdeg^{b-l} 
\left\llbracket
\begin{tikzpicture}[scale=.5,smallnodes,anchorbase]
	\draw[very thick] (-2,0) node[left]{$a$} to (0,0) \pr (2,1) node[right]{$b$};
	\draw[line width=5pt,color=white] (0,1) \pr (2,0);
	\draw[very thick] (-2,1) node[left]{$b$} to (0,1) \pr (2,0) node[right]{$a$};
	\draw[very thick] (-1.75,1) to (-1.5,.5)node[right,yshift=-1pt,xshift=-1pt]{$l$} to  (-1.25,0);
	\draw[very thick] (-.75,0) to (-.5,.5) to  (-.25,1);
\end{tikzpicture}
\right\rrbracket.
\]
\end{prop}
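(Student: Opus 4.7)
The plan is to reduce the right-hand side to the left-composition of a shifted Rickard complex with $\F^{(l)}$ via Lemma~\ref{lem:Cautis+}, and then to identify $R_l$ with this object term-by-term in the $\zeta$-basis.

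First, I would apply Lemma~\ref{lem:Cautis+} with $\ell = a-b+l$ to obtain
\[
\E^{(a-b+l)} \hComp C_{a,b} \simeq \qdeg^{b(a-b+l)}\, {}_{(a+l,b-l)}C_{(a,b)} \, ,
\]
so that the RHS of the proposition becomes (up to an overall shift) the complex $\F^{(l)} \hComp {}_{(a+l,b-l)}C_{(a,b)}$. The chain groups of ${}_{(a+l,b-l)}C_{(a,b)}$ are $\F^{(b-l-k)}\E^{(b-k)}\oone_{a,b}$ for $0 \leq k \leq b-l$, and pre-composing with $\F^{(l)}$ yields chain groups $\F^{(l)}\F^{(b-l-k)}\E^{(b-k)}\oone_{a,b} \cong \qbinom{b-k}{l}\, \F^{(b-k)}\E^{(b-k)}\oone_{a,b} = \qbinom{b-k}{l}\, W_{b-k}$ via the standard divided-power identity. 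After re-indexing $k \mapsto b-k$, the RHS thus has chain groups $\qbinom{k}{l}\, W_k$ for $l \leq k \leq b$.

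Next, I would compare these with the chain groups of $R_l$. By Definition~\ref{def:Pkls} and the weights $\qdeg^{2i}\tdeg^{-1}$ of the $\zeta_i^{(k)}$, the graded dimension of $\largewedge^l[\zeta^{(k)}_1,\ldots,\zeta^{(k)}_k]$ is exactly $\qbinom{k}{l}$ (up to an overall shift), so the chain groups of $R_l$ and the RHS match. It then remains to verify that the differentials agree. By Proposition~\ref{prop:diffKMCSexplicit}, the horizontal differential $\d^h$ on $R_l$ is built out of maps $\chi_m^+$ with $m$ determined by the differences of $\zeta$-indices; via Lemma~\ref{lemma:somerelations1}, these should assemble into the standard $\chi_0^+$ differential of the shifted Rickard ${}_{(a+l,b-l)}C_{(a,b)}$, block-decomposed along the splitting $\F^{(l)}\F^{(b-l-k)} \cong \qbinom{b-k}{l}\F^{(b-k)}$.

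The main obstacle will be the detailed bookkeeping of $q$- and $t$-shifts, signs, and quantum binomial multiplicities through the change of basis from $\xi$ to $\zeta$ and through the divided-power decomposition. To bypass some of this, one could invoke a uniqueness argument modeled on Proposition~\ref{prop:UniqueYCrossing}: both $R_l$ and its conjectural counterpart have the same chain groups (as established above), and the action of $h_{r+1}(\leftX_2 - \rightX_1)$ is null-homotopic on both (via Lemma~\ref{lemma:dot sliding} on the RHS and Proposition~\ref{prop:trivMCCS} on the LHS). One could then argue that any nonzero chain map between them is forced to be an isomorphism by the indecomposability of the shifted Rickard complex, reducing the proof to the construction of such a map.
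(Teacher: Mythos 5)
Your first two paragraphs retrace the paper's argument fairly closely: both you and the paper use Lemma~\ref{lem:Cautis+} to rewrite $\qdeg^{-(b-l)} \tdeg^{b-l} \F^{(l)}\E^{(a-b+l)} \hComp C_{a,b}$ as a twisted complex with chain groups $\F^{(l)}\F^{(k-l)}\E^{(k)}\oone_{a,b}$, and both decompose $\F^{(l)}\F^{(k-l)}$ into $\qbinom{k}{l}$ copies of $\F^{(k)}$ to match the chain groups of $R_l$. One notable difference: you invoke the bare divided-power identity, whereas the paper uses Lemma~\ref{lem:thickdigon}, which records not just the isomorphism class but the explicit components of the projections/inclusions as Schur-decorated foams. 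This is not pedantry --- those components are exactly what is needed to compute the resulting differential, so your version of the decomposition is not yet sufficient to carry out the next step.

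The real gap is that you defer the differential comparison ("the main obstacle will be the detailed bookkeeping"), but that comparison is the content of the proof. The paper computes the components of $(\d^h)'$ in the basis coming from Lemma~\ref{lem:thickdigon}, uses the Pieri-type expansion $\mathfrak{s}_{\alpha}(\X + z) = \sum_{\lambda}\mathfrak{s}_{\lambda}(\X) z^{m_\lambda}$ over horizontal strips $\alpha/\lambda$, and then shows (via the bijection of Lemma~\ref{lem:bij}) that a horizontal-strip containment $\gamma \subset \alpha$ corresponds precisely to the condition $i_p - j_p \in \{0,1\}$ on $\zeta$-indices appearing in Proposition~\ref{prop:diffKMCSexplicit}. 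Your appeal to Lemma~\ref{lemma:somerelations1} for this step does not do the work; that lemma is an $h/e$ generating-function identity and does not by itself produce the horizontal-strip combinatorics.

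Your fallback uniqueness argument also has concrete problems. You cite Proposition~\ref{prop:trivMCCS} to claim $h_{r+1}(\leftX_2 - \rightX_1)$ is null-homotopic on $R_l$, but that proposition concerns $h_r(\leftX_2 - \rightX_2)$ acting on $\MCCSmin_{a,b}^s$, which is a different complex from $R_l$: its differential is $\d^v + \d^h$, not $\d^h$ alone, and the homotopy $\Theta_r^v$ raises the $l$-index, so it is not even an endomorphism of the single row $R_l$. Moreover, Proposition~\ref{prop:UniqueYCrossing} as stated characterizes $C_{a,b}$ (with underlying bimodule $\bigoplus_k \qdeg^{-k}\tdeg^k C_{a,b}^k$), not the composites $\F^{(l)}\hComp{}_{(a+l,b-l)}C_{(a,b)}$, whose underlying bimodule has higher multiplicities $\qbinom{k}{l}$; extending the uniqueness statement to that setting requires additional $\Hom$-space analysis which you have not supplied. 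And even granting all that, you would still need to exhibit a nonzero chain map between the two complexes, which is again roughly the same work as the direct comparison. In short: the chain-group matching is fine, but both of your proposed routes to the differential matching have real holes.
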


The proof requires the following preparatory results.

\begin{definition}\label{def:partitions in rectangle} 
For each pair of integers $r,s\geq 0$, 
let $P(r,s)$ denote the set of partitions $\a$ with $\a_1\leq s$
and at most $r$ parts 
(i.e.~the Young diagram for $\a$ fits in an $r\times s$ rectangle). 
For each $\a\in P(r,s)$, let $\hat{\a}\in P(s,r)$ denote the
\emph{dual complementary partition}. 
Let $\Z^{P(r,s)}$ denote the graded abelian group that is free on the
partitions $\a\in P(r,s)$, graded by declaring that $\deg(\a) = 2|\a|-rs$.
\end{definition}

\begin{lem}[{\cite[Theorem 5.1.1]{KLMS}}] \label{lem:thickdigon} 
For $r,s\geq 0$, there is an isomorphism 
\[
\F^{(s)}\F^{(r)} \; \cong \bigoplus_{\alpha\in P(r,s)} \qdeg^{2|\a|-r s} \F^{(r+s)}
\]
with components given by
\[
(-1)^{|\hat\alpha|}
\begin{tikzpicture}[scale =.75,smallnodes,anchorbase]
	\draw[CQG, ultra thick,<-] (0,0)  to [out=90,in=210] 
	node[black,pos=.5]{\CQGbox{\mathfrak{s}_{\hat{\alpha}}}}(.5,.875);
	\draw[CQG, ultra thick,<-] (1,0)  to [out=90,in=330] (.5,.875);
	\draw[CQG, ultra thick] (.5,.875) to (.5,1.625);
\end{tikzpicture}
\colon  \F^{(s)}\F^{(r)} \to \qdeg^{2|\a|-r s} \F^{(r+s)}
\, , \quad
\begin{tikzpicture}[scale =.75,smallnodes,anchorbase]
	\draw[CQG, ultra thick,<-] (.5,.625) to (.5,1.375);
	\draw[CQG, ultra thick] (.5,1.375) to [out=150,in=270] 
		 (0,2.25);
	\draw[CQG, ultra thick] (.5,1.375)  to [out=30,in=270] 
		node[black,pos=.6]{\CQGbox{\mathfrak{s}_{\alpha}}} (1,2.25);
\end{tikzpicture}
\colon \qdeg^{2|\a|-r s} \F^{(r+s)} \to \F^{(s)}\F^{(r)}
\] \qed
\end{lem}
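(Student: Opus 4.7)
The plan is to verify the isomorphism by exhibiting explicit mutually orthogonal inclusion/projection maps indexed by $\alpha \in P(r,s)$ and then invoking a dimension count. First, the $q$-binomial identity
\[
\qbinom{r+s}{r} = \sum_{\alpha \in P(r,s)} \qdeg^{2|\alpha|-rs}
\]
shows that the graded ranks of the two sides agree (as summands of $\End(\F^{(r+s)})$-modules, after factoring out the ambient multiplicity space), so it suffices to produce compatible splittings realizing this decomposition.

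Write $\iota_\beta \colon \qdeg^{2|\beta|-rs}\F^{(r+s)} \to \F^{(s)}\F^{(r)}$ for the ``split'' map carrying Schur decoration $\mathfrak{s}_\beta$, and $\pi_\alpha \colon \F^{(s)}\F^{(r)} \to \qdeg^{2|\alpha|-rs}\F^{(r+s)}$ for the ``merge'' map carrying decoration $(-1)^{|\hat\alpha|}\mathfrak{s}_{\hat\alpha}$, as in the statement. The central identity to establish is the orthogonality relation
\[
\pi_\alpha \circ \iota_\beta \;=\; \delta_{\alpha,\beta}\cdot \Id_{\F^{(r+s)}}
\qquad \text{for all } \alpha,\beta \in P(r,s).
\]
The composition is a Schur-decorated digon on $\F^{(r+s)}$; applying a reflected version of the ``square flop'' relation from the thick calculus (as used already in the proof of Lemma \ref{lemma:dot sliding}) rewrites it as a sum over intermediate partitions $\gamma$, where each summand is an $\mathfrak{s}_\gamma$-decoration on $\F^{(r+s)}$ weighted by a thick bubble. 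Via the evaluation \eqref{eq:thickbubble}, each bubble becomes a Schur polynomial in the adjacent difference of alphabets, reducing the claim to a purely combinatorial Schur-polynomial identity.

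The resulting identity is the classical orthogonality of complementary Schur polynomials indexed by partitions in the $r \times s$ rectangle, which can be verified via the Jacobi--Trudi formula (expressing each $\mathfrak{s}_\alpha$ as a determinant of complete symmetric functions) together with the elementary relation \eqref{eq:HE}. Once orthogonality is in hand, $\bigoplus_\alpha \iota_\alpha$ is split injective with left inverse $\bigoplus_\alpha \pi_\alpha$, and the rank count of the first paragraph upgrades this split injection to an isomorphism. The main obstacle is bookkeeping signs, duality conventions (between $\alpha$ and $\hat\alpha$), and grading shifts through the thick calculus manipulations; the most efficient route is therefore to simply invoke \cite[Theorem 5.1.1]{KLMS}, after verifying that our conventions in \eqref{eq:chi plus}--\eqref{eq:chi minus} and \eqref{eq:bubble}--\eqref{eq:thickbubble} match those of \emph{loc.~cit.} up to a controlled rescaling inherited from the $2$-functor $\Phi$ of Proposition \ref{prop:SH}.
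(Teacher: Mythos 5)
The paper offers no independent proof of this lemma: it is imported wholesale from \cite[Theorem 5.1.1]{KLMS} (note the \qed\ directly after the statement), so your closing remark---that the efficient route is to invoke \emph{loc.~cit.} after checking that the conventions transported by $\Phi$ from Proposition \ref{prop:SH} and \eqref{eq:chi plus}--\eqref{eq:thickbubble} agree up to rescaling---is exactly what the paper does, and on that basis your proposal lands in the right place.

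The from-scratch sketch, however, contains a genuine misstep at its central point. The composite $\pi_\alpha\circ\iota_\beta$ is a digon built solely from the splitter and merger of divided powers of $\F$; it lives entirely inside the one-color thick calculus (equivalently, the image of the nilHecke algebra acting on $\F^{(r+s)}\oone_\lambda$). The ``square flop'' relation and the bubble evaluations \eqref{eq:bubble}--\eqref{eq:thickbubble} are not the applicable tools: square flop rewrites compositions of $\chi^{\pm}$-type maps, i.e.\ it governs the interaction of $\E$-caps and $\E$-cups with thick strands (as in Lemma \ref{lemma:dot sliding}), and bubbles are decorated $\E\F$-circles valued in $\End(\oone_{\aa})$; no $\E$-strands occur in your digon, and a weight-dependent bubble value could not arise anyway since the evaluation must be independent of $\lambda$. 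The correct mechanism is the thick-calculus digon evaluation of \cite[Section 2.5]{KLMS}: the splitter and merger each have degree $-rs$, so the undecorated digon vanishes for degree reasons, and the digon decorated by $\mathfrak{s}_\beta$ and $(-1)^{|\hat\alpha|}\mathfrak{s}_{\hat\alpha}$ equals $\delta_{\alpha\beta}\Id$ by the orthogonality of complementary Schur polynomials inside the nilHecke algebra---which is precisely how Theorem 5.1.1 is proved in \cite{KLMS}. A secondary caveat: your rank count, as phrased, risks circularity. The $q$-binomial identity $\qbinom{r+s}{r}=\sum_{\alpha\in P(r,s)}\qdeg^{2|\alpha|-rs}$ is combinatorial, but knowing that $\F^{(s)}\F^{(r)}$ decomposes into shifted copies of $\F^{(r+s)}$ with total graded multiplicity $\qbinom{r+s}{r}$ is essentially the statement being proved; to upgrade the split injection to an isomorphism you should instead quote the graded rank of $\Hom(\F^{(s)}\F^{(r)},\F^{(r+s)})$ (from the nilHecke/Hom formulas) together with Krull--Schmidt, or argue that $\sum_\alpha \iota_\alpha\pi_\alpha$ is an automorphism.
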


\begin{lem}\label{lem:bij} 
Fix $r,s\geq 0$ and let $\zeta_{j}$ for $1\leq j \leq r+s$ be variables of
degree $\qdeg^{2j} \tdeg\inv$. The bijections between the following:
	\begin{enumerate}
		\item the set $B(r,s)$ of binary sequences $\e\in \{0,1\}^{r+s}$with
		exactly $r$ $0$'s in positions $i_1<\dots<i_r$ and $s$ $1$'s in
		positions $j_1<\cdots<j_s$,
		\item the set of of non-zero monomial basis elements $\zeta_{\e} :=
		\zeta_{j_1}\cdots\zeta_{j_s}$ in $\qdeg^{-s(r+s+1)} \tdeg^s
		\largewedge^s[\zeta_1,\ldots,\zeta_{r+s}]$,
		\item the set of partitions $P(r,s)$
	\end{enumerate}
given by $\e \leftrightarrow \zeta_\e \leftrightarrow \a(\e)$ with 
$\a(\e)_m  :=  \#\{e\in \{1,\ldots,s\} \: | \: j_e> j_m\}$ determine an isomorphism of
(bi)graded abelian groups
\begin{equation}\label{eq:iso of mult spaces}
	\psi \colon   \qdeg^{-s(r+s+1)} \tdeg^s  \largewedge^s[\zeta_1,\ldots,\zeta_{r+s}] \xrightarrow{\cong} \Z^{P(r,s)}
	\, , \quad 
	\psi(\zeta_\e) := \alpha(\e)
\end{equation}
\end{lem}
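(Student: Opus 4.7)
The statement combines a purely combinatorial bijection assertion with a check that the resulting identification respects the bigrading. The first piece, (1)$\leftrightarrow$(2), is essentially tautological: a nonzero monomial basis element of $\largewedge^s[\zeta_1,\dots,\zeta_{r+s}]$ is uniquely specified by choosing an $s$-element subset of positions $j_1<\dots<j_s$ in $\{1,\dots,r+s\}$, and such a subset is the same datum as a binary sequence in $\{0,1\}^{r+s}$ with exactly $s$ ones.

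For the bijection (1)$\leftrightarrow$(3), I would invoke the classical lattice-path description of partitions in a rectangle. View $\e \in B(r,s)$ as a lattice path from $(0,0)$ to $(r,s)$ with each $0$ an east step and each $1$ a north step, and take $\a(\e)$ to be the partition whose Young diagram fills the region cut off above the path inside the $r \times s$ rectangle. This is a standard bijection $B(r,s) \xrightarrow{\sim} P(r,s)$. Unwinding the definition, the multiset of parts of $\a(\e)$ is $\{j_m - m : 1 \leq m \leq s\}$; equivalently $\a(\e)_m = j_{s-m+1} - (s-m+1)$.

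The grading check is then a short direct calculation. The unshifted monomial $\zeta_\e = \zeta_{j_1}\cdots \zeta_{j_s}$ has bidegree $\qdeg^{2\sum_m j_m}\tdeg^{-s}$, so after the prescribed shift by $\qdeg^{-s(r+s+1)}\tdeg^{s}$ it carries bidegree $\qdeg^{2\sum_m j_m - s(r+s+1)}\tdeg^{0}$. On the other hand, summing the parts above,
\[
|\a(\e)| \;=\; \sum_{m=1}^s (j_m - m) \;=\; \sum_{m=1}^s j_m \;-\; \tfrac{s(s+1)}{2},
\]
so the corresponding basis element of $\Z^{P(r,s)}$ has degree $2|\a(\e)| - rs = 2\sum_m j_m - s(s+1) - rs = 2\sum_m j_m - s(r+s+1)$, agreeing with the wedge side. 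Since $\Z^{P(r,s)}$ lives in $\tdeg^0$, the $\tdeg$-component matches trivially, and $\psi$ is a bijection of bases compatible with both gradings, hence the claimed isomorphism of bigraded abelian groups. There is no real obstacle here; the only point needing care is pinning down the intended partition statistic so that $|\a(\e)|$ evaluates to $\sum_m j_m - s(s+1)/2$, after which the degree identity falls out by direct arithmetic.
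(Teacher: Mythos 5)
Your proof is correct and slightly more direct than the paper's. Both treat the underlying combinatorial bijections as standard and reduce the lemma to a grading check; you do this by computing $|\a(\e)| = \sum_m j_m - s(s+1)/2$ in closed form and comparing degrees by arithmetic, whereas the paper argues incrementally: it verifies the base case $\e = (1,\dots,1,0,\dots,0)$ (where $\a(\e)=\emptyset$ and $\zeta_\e = \zeta_1\cdots\zeta_s$) and then observes that each adjacent transposition $1,0\mapsto 0,1$ changes $2|\a(\e)|$ and $\deg_q(\zeta_\e)$ by the same amount $2$. One small caveat: the explicit partition you produce, with parts $\{j_m - m : 1\leq m\leq s\}$, lies in $P(s,r)$ and is the conjugate of the one the paper intends, namely $\a(\e)_m = \#\{e : j_e > i_m\}$ for $m=1,\dots,r$ (the displayed formula in the statement has $j_m$ where it should have $i_m$; as literally written it would produce a constant staircase independent of $\e$). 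Since $|\a|=|\a^T|$, this does not affect your degree computation at all, but the distinction matters downstream in the proof of Proposition~\ref{prop:interpretation of Rl0}, where the specific partition (not just its size) labels the Schur-polynomial decorations.
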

\begin{proof} The bijections are standard, thus clearly induce an isomorphism
$\psi$ of abelian groups. To verify that $\psi$ preserves the bigrading, note
that, prior to any shifts, the monomial $\zeta_{\e}$ is of degree
$\qdeg^{2|\a(\e)|+s(s+1)}\tdeg^{-s}$ in
$\largewedge^s[\zeta_1,\ldots,\zeta_{r+s}]$. To see this, observe that it holds
for the sequence $1,\dots,1, 0, \dots ,0$, and that if a sequence $\e$ is
obtained from a sequence $\e'$ by replacing $1,0$ by $0,1$, then $2|\alpha(\e)|
- 2|\alpha(\e')| = 2 = \deg_q(\zeta_\e) - \deg_q(\zeta_{\e'})$.
\end{proof}

\begin{proof}[Proof of Proposition \ref{prop:interpretation of Rl0}]
Let $0\leq l\leq b$.
Lemma \ref{lem:Cautis+} implies that
\begin{equation}\label{eq:FEC}
\qdeg^{-(b-l)} \tdeg^{b-l} \F^{(l)}\E^{(a-b+l)} C_{a,b} 
\; \simeq \;
\tw_{(\d^h)'} \! \left(\bigoplus_{k=l}^{b}  \qdeg^{kd-2b+l} \tdeg^{2b-k-l} \F^{(l)}\F^{(k-l)}\E^{(k)}
\right)
\end{equation}
where $d=a-b+l+1$ and
\[
(\d^h)':=
\bigoplus_{i=0}^b
\CQGsgn{(-1)^{b-k}}
\begin{tikzpicture}[anchorbase,smallnodes]
	\draw[CQG,ultra thick,<-] (-.75,-.5) node[below]{$l$} to (-.75,.7);
	\draw[CQG,ultra thick,<-] (0,-.5) node[below]{$k{-}l$} to 
		(0,.7) node[above=-2pt,xshift=-2pt]{$k{-}l{-}1$};
	\draw[CQG,ultra thick,->] (.75,-.5) node[below]{\scriptsize$k$} to 
		(.75,.7) node[above=-2pt,xshift=2pt]{$k{-}1$};
	\draw[CQG,thick, directed=.75] (.75,0) to [out=90,in=90]  (0,0);
\end{tikzpicture} \, .
\]
Using Lemmata~\ref{lem:thickdigon} and \ref{lem:bij}, we deduce that
\begin{equation}\label{eq:Riso}
\begin{aligned}
\text{Right-hand side of \eqref{eq:FEC}}
\;&\cong \;  \tw_{(\d^h)'}\left( \bigoplus_{k=l}^{b} 
\bigoplus_{\a\in P(k-l,l)}  \qdeg^{kd-2b+l-l(k-l)+2|\a|} \tdeg^{2b-k-l} W_k\right) \\
\;& \cong\; \tw_{(\d^h)'}\left(\bigoplus_{k=l}^b  \qdeg^{kd-2b+l} \tdeg^{2b-k-l} W_k\otimes \Z^{P(k-l,l)} \right) \\
\;& \cong\; \tw_{(\d^h)'}\left(\bigoplus_{k=l}^b  \qdeg^{k(a-b-1)-2b} \tdeg^{2b-k} W_k\otimes  \largewedge^l[\zeta^{(k)}_1,\ldots, \zeta^{(k)}_k] \right). 
\end{aligned}
\end{equation}
We conclude that the latter chain complex has the same chain groups as the
complex $(R_l,\d^h)$, so it suffices to equate their differentials.

The component of the differential $(\d^h)'$ in the first line of \eqref{eq:Riso}
from the $(k,\a)$ summand to the $(k-1,\gamma)$ summand is 
\begin{equation}\label{eq:lemdiff3}
(-1)^{|\hat{\gamma}|} \cdot
(-1)^{b-k}
\begin{tikzpicture}[smallnodes,anchorbase]
	\draw[CQG,ultra thick,->] (1,0)  to (1,2.25) node[above, xshift=3pt]{\scriptsize$k{-}1$};  
	\draw[CQG,thick, directed=.55] (1,1.25) to [out=90,in=90] (.3,1.25); 
	\draw[CQG, ultra thick,<-] (0,0)  to (0,.5); 
	\draw[CQG, ultra thick] (0,.5) to [out=150,in=210]
		node[black,pos=.6,xshift=0pt]{\CQGbbox{\mathfrak{s}_{\hat{\gamma}}}} (0,1.75); 
	\draw[CQG, ultra thick] (0,.5) to [out=30,in=330] 
		node[black,pos=.3,xshift=-2pt]{\CQGbox{\mathfrak{s}_\alpha}} (0,1.75); 
	\draw[CQG, ultra thick] (0,1.75) to (0,2.25) node[above, xshift=-3pt]{\scriptsize$k{-}1$};
\end{tikzpicture}
\end{equation} 
so we must show that, with respect to the isomorphism \eqref{eq:iso of mult
spaces}, we have $\psi\circ \d^h =  (\d^h)'\circ \psi$. (Recall that the
differential $\d^h$ on $R_l$ was characterized in
Proposition~\ref{prop:diffKMCSexplicit}.) For this we use the following
symmetric function identity:
\[
\mathfrak{s}_{\a}(\X + z) = \sum_{\lbd}\mathfrak{s}_{\lbd}(\X) z^{m_\lambda},
\]
where $m_{\lbd} = | \a | - | \lbd |$
and the sum on the right is over all Young diagrams $\lbd\subset \a$ for which
the skew diagram $\a / \lbd$ does not contain two boxes in the same column. 
Such a skew diagram is called a \emph{horizontal strip}.
Thus,  
\[
(-1)^{|\hat{\gamma}|} \cdot
(-1)^{b-k}
\begin{tikzpicture}[smallnodes,anchorbase]
	\draw[CQG,ultra thick,->] (1,0)  to (1,2.25) node[above, xshift=3pt]{\scriptsize$k{-}1$};  
	\draw[CQG,thick, directed=.55] (1,1.25) to [out=90,in=90] (.3,1.25); 
	\draw[CQG, ultra thick,<-] (0,0)  to (0,.5); 
	\draw[CQG, ultra thick] (0,.5) to [out=150,in=210]
		node[black,pos=.6,xshift=0pt]{\CQGbox{\mathfrak{s}_{\hat{\gamma}}}} (0,1.75); 
	\draw[CQG, ultra thick] (0,.5) to [out=30,in=330] 
		node[black,pos=.3,xshift=-2pt]{\CQGbox{\mathfrak{s}_\alpha}} (0,1.75); 
	\draw[CQG, ultra thick] (0,1.75) to (0,2.25) node[above, xshift=-3pt]{\scriptsize$k{-}1$};
\end{tikzpicture}
= 
(-1)^{b-k}
\sum_{\lbd} 
(-1)^{|\hat{\gamma}|} \cdot
\begin{tikzpicture}[smallnodes,anchorbase]
	\draw[CQG,ultra thick,->] (1,0)  to (1,2.25) node[above, xshift=3pt]{\scriptsize$k{-}1$};  
	\draw[CQG,thick, directed=.35] (1,.85) to [out=90,in=90] node[black,pos=.5] {$\bullet$} node[black,below]{$m_\lbd$} (.3,.85); 
	\draw[CQG, ultra thick,<-] (0,0)  to (0,.5); 
	\draw[CQG, ultra thick] (0,.5) to [out=150,in=210]
		node[black,pos=.4,xshift=0pt]{\CQGbox{\mathfrak{s}_{\hat{\gamma}}}} (0,1.75); 
	\draw[CQG, ultra thick] (0,.5) to [out=30,in=330] 
		node[black,pos=.7,xshift=-2pt]{\CQGbox{\mathfrak{s}_\lbd}} (0,1.75); 
	\draw[CQG, ultra thick] (0,1.75) to (0,2.25) node[above, xshift=-3pt]{\scriptsize$k{-}1$};
\end{tikzpicture} \, ,
\]
where the sum on the right is over partitions $\lbd\in P(k-l-1,l)$ such that 
$\a / \lbd$ is a horizontal strip. By Lemma~\ref{lem:thickdigon}, all terms in this sum
vanish, unless $\lbd = \gamma$. The latter holds precisely when $\alpha / \gamma$
is a horizontal strip, in which case the only surviving term in the sum
evaluates to $\chi_m^+$ with $m:=|\a|-|\gamma|$.

Now, suppose $\e,\e'\in \{0,1\}^k$ are binary sequences 
with $l$ occurrences of 1.  
Let $j_1 < \cdots < j_l$ be the indices for which $\e_{j_p}=1$, and similarly
for $j_1'<\cdots < j_l'$.  
Let $\a(\e) ,\a(\e') \in P(k-l,l)$ be the associated partitions, then
Lemma~\ref{lem:bij} implies this is a horizontal strip if and only if $j_p -
j_p' \in \{0,1\}$ for all $p=1,\ldots,l$. Indeed, the bijection therein gives
that $\alpha(\e') \subset \alpha(\e)$ if and only if $\e'$ can be obtained from
$\e$ by a sequence of operations on binary sequences that replace the (adjacent)
symbols $0,1$ with $1,0$. We hence can pass from $\e$ to $\e'$ by permuting the
initial $1$ in $\e$ left through some $0$'s to its position in $\e'$, then do
the same for the second $1$ in $\e$, and so on. If $j_p - j_p' > 1$, then at the
$p^{th}$ step of this procedure, we move a $1$ past more than one $0$, which
produces two or more boxes in a column of $\alpha(\e)$ that are not in
$\alpha(\e')$. Given this, the result now follows from
Proposition~\ref{prop:diffKMCSexplicit}.
\end{proof}

\begin{thm}\label{thm:Q0 is FT} 
We have $\MCCS^0_{a,b}\simeq \MCCSmin^0_{a,b}$. 
In alternative notation:
\[
\left\llbracket
 \begin{tikzpicture}[scale=.5,smallnodes,anchorbase,rotate=90]
 	\draw[very thick,->] (1,0) to [out=90,in=270] (0,1.5);
 	\draw[line width=5pt,color=white,->] (1,-1.5) to [out=90,in=270] (0,0) 
		to [out=90,in=270] (1,1.5);
 	\draw[very thick,->] (1,-1.5) node[right,xshift=-2pt]{$b$} to [out=90,in=270] (0,0) 
		to [out=90,in=270] (1,1.5);
 	\draw[line width=5pt,color=white] (0,-1.5) to [out=90,in=270] (1,0);
 	\draw[very thick] (0,-1.5) node[right,xshift=-2pt]{$a$} to [out=90,in=270] (1,0);
 \end{tikzpicture}
\right\rrbracket
\simeq 
\tw_{\d^v+\d^h}\left(\bigoplus_{0\leq l\leq k\leq b}^b \qdeg^{k(a-b+1)-2b} \tdeg^{2b-k}
\begin{tikzpicture}[smallnodes,rotate=90,anchorbase,scale=.75,baseline=0em]
	\draw[very thick] (0,.25) to [out=150,in=270] (-.25,1) node[left,xshift=2pt]{$a$};
	\draw[very thick] (.5,.5) to (.5,1) node[left,xshift=2pt]{$b$};
	\draw[very thick] (0,.25) to node[left,xshift=2pt,yshift=-1pt]{$k$} (.5,.5);
	\draw[very thick] (0,-.25) to (0,.25);
	\draw[very thick] (.5,-.5) to [out=30,in=330] (.5,.5);
	\draw[very thick] (0,-.25) to node[right,xshift=-2pt,yshift=-1pt]{$k$} (.5,-.5);
	\draw[very thick] (.5,-1) node[right,xshift=-2pt]{$b$} to (.5,-.5);
	\draw[very thick] (-.25,-1)node[right,xshift=-2pt]{$a$} to [out=90,in=210] (0,-.25);
\end{tikzpicture}
\otimes \largewedge^l[\zeta^{(k)}_1,\ldots,\zeta^{(k)}_k]\right)
\]
where the anticommuting differentials $\d^v$ and $\d^h$ are as described in
Proposition~\ref{prop:diffKMCSexplicit}.
\end{thm}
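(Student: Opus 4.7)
The plan is to exploit the row-wise decomposition established in Section \ref{ss:filtration} together with the topological identification of rows from Proposition \ref{prop:interpretation of Rl0}. We have $\MCCSmin^0_{a,b} = \tw_{\d^v}\bigl(\bigoplus_{l=0}^b R_l\bigr)$, and each row satisfies $R_l \simeq \qdeg^{-(b-l)}\tdeg^{b-l}\,\F^{(l)}\E^{(a-b+l)} \hComp C_{a,b}$. The overall goal is to recognize the global differential $\d^v$ (which twists these rows together) as producing a complex that horizontally factors as a Rickard complex $C_{b,a}$ composed with $C_{a,b}$, hence as the two-strand $(a,b)$-colored full twist $\MCCS^0_{a,b}$.

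The first major step is to upgrade the row-wise equivalences from Proposition \ref{prop:interpretation of Rl0} to an equivalence of one-sided twisted complexes. Using a standard homological perturbation argument (the differential $\d^v$ strictly decreases the row index $l$, so the total complex is one-sided with respect to the row filtration), I will identify $\MCCSmin^0_{a,b}$ with a complex of the form $\tw_{\widetilde{\delta}}\bigl(\bigoplus_{l=0}^b \qdeg^{-(b-l)}\tdeg^{b-l}\,\F^{(l)}\E^{(a-b+l)} \hComp C_{a,b}\bigr)$ for an induced differential $\widetilde{\delta}$. Since $C_{a,b}$ is invertible in the homotopy category (with inverse $C_{a,b}^\vee$), this complex factors as $X \hComp C_{a,b}$ for some complex $X \in \CS({}_{(a,b)}\SSBim_{(b,a)})$. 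Reindexing by $k = b-l$, the underlying graded bimodule of $X$ agrees with $\bigoplus_k \qdeg^{-k}\tdeg^k\,\F^{(b-k)}\E^{(a-k)}\oone_{b,a}$, i.e., with the chain groups of $C_{b,a}$.

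The second major step is to identify $X$ with $C_{b,a}$ on the nose. For this I will invoke the analog of Proposition \ref{prop:UniqueYCrossing} for $C_{b,a}$ (by symmetry in $a,b$): it suffices to produce a null-homotopy of $h_{r+1}(\leftX_2 - \rightX_1)$ on $X$ for some $r \geq 0$. Such a null-homotopy will be transported from $\MCCSmin^0_{a,b}$, where Proposition \ref{prop:trivMCCS} supplies derivations $\Theta_r^v$ satisfying $[\d^v + \d^h, \Theta_r^v] = h_r(\leftX_2 - \rightX_2)$. Combined with the identification $h_r(\leftX_2) \sim h_r(\rightX_1)$ on $C_{a,b}$ used in the proof of Lemma \ref{lemma:dot sliding} (and originating from \cite[Proposition 5.7]{RW}), this yields the required null-homotopy for $X$ after factoring out the $C_{a,b}$ tensor factor. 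Assembling the three steps gives $\MCCSmin^0_{a,b} \simeq X \hComp C_{a,b} \simeq C_{b,a} \hComp C_{a,b} \simeq \MCCS^0_{a,b}$, as desired.

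The main obstacle is the careful bookkeeping of boundary alphabets and the verification that the perturbation step respects the factorization. Specifically, the alphabets $\leftX_2, \rightX_1$ appearing in the uniqueness criterion for $C_{b,a}$ must be identified with the correct boundary alphabets of $X$ (not of the ambient $\MCCSmin^0_{a,b}$, which additionally involves the $C_{a,b}$ tensor factor), and the passage of the null-homotopy through this identification requires using that the inner $\rightX_1$ of $X$ coincides with the inner $\leftX_2$ of $C_{a,b}$, together with the aforementioned symmetric-function identity on $C_{a,b}$. Once these bookkeeping points are managed, the argument is straightforward.
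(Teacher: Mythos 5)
Your proposal is correct and follows essentially the same approach as the paper's proof. In the paper, one first composes $\MCCSmin^0_{a,b} \hComp C_{a,b}^\vee$ and applies Proposition~\ref{prop:interpretation of Rl0} row by row to recognize the result as a complex with the chain groups of $C_{b,a}$; then the null-homotopy of $h_r(\leftX_2-\rightX_2)$ on $\MCCSmin^0_{a,b}$ (Proposition~\ref{prop:trivMCCS}) is chained with the known null-homotopy of $h_r(\leftX_2-\rightX_1)$ on $C_{a,b}^\vee$, via the factorization $h_r(\leftX_2-\rightX_1)=\sum_{i+j=r}h_i(\leftX_2-\Y)h_j(\Y-\rightX_1)$ over the middle alphabet, to invoke the uniqueness criterion of Proposition~\ref{prop:UniqueYCrossing}. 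Your plan reorders these steps---identifying the rows and ``factoring out'' $C_{a,b}$ first, then applying the perturbation and uniqueness argument---but this is the same argument, since factoring out $C_{a,b}$ is implemented precisely by tensoring with $C_{a,b}^\vee$, and you correctly flag the alphabet-bookkeeping for the null-homotopy transport as the point requiring care.
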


This shows that \cite[Conjecture 1.3]{BH} holds in the singular Soergel bimodule 
$2$-representation of categorified quantum $\slnn{2}$, 
and hence in any integrable quotient of $\USd(\slnn{2})$.
See Remark~\ref{rem:BH}.

\begin{proof}
Recall that $C_{a,b}^\vee$ denotes the inverse to the Rickard complex $C_{a,b}$. 
Using Proposition \ref{prop:interpretation of Rl0}, we compute
\[
\MCCSmin^0\hComp C_{a,b}^\vee 
\cong 
\left(\bigoplus_{l=0}^b R_l\hComp C_{a,b}^\vee, \d^v\hComp \Id_{C_{a,b}^\vee}\right)
\simeq
\tw_{\d} \! \left(\bigoplus_{l=0}^b  \qdeg^{-(b-l)} \tdeg^{b-l}  \F^{(l)}\E^{(a-b+l)}\oone_{b,a}\right)
\]
for some differential $\d$. 
Note that this agrees with $C_{b,a}$ as a graded bimodule.

Proposition~\ref{prop:trivMCCS} shows that 
the action of $h_r(\leftX_2 - \rightX_2)$ on $\MCCSmin^0$
is null-homotopic
for all $r>0$. 
Further, by Proposition~\ref{prop:UniqueYCrossing}, 
the action of $h_r(\leftX_2 - \rightX_1)$ on $C_{a,b}^\vee$ 
is null-homotopic
for all $r >0$. Together, these facts imply that 
the action of $h_r(\leftX_2 - \rightX_1)$ on $\MCCSmin^0\hComp C_{a,b}^\vee$
is null-homotopic. 
Proposition \ref{prop:UniqueYCrossing} then implies that
$\MCCSmin^0\hComp C_{a,b}^\vee \simeq C_{b,a}$, 
and thus $\MCCSmin^0 \simeq C_{b,a}\hComp C_{a,b} = \MCCS^0$. 
\end{proof}

\subsection{Proof of the colored skein relation}
\label{ss:colored skein}
In this section, we prove Theorem \ref{thm:coloredskein}. 
The key step is to show that $\MCCSmin_{a,b}^s$ is related to $\MCCSmin_{a,b-s}^0$ 
in precisely the same way that $\MCCS_{a,b}^s$ is related to $\MCCS_{a,b-s}^0$.

\begin{definition}
Let $\I^{(s)}\colon \CS_{a,\ell}\rightarrow \CS_{a,\ell+s}$ 
denote the functor defined by
\[
\I^{(s)}(X):=
\begin{tikzpicture}[scale=.5,smallnodes,rotate=90,anchorbase]
	\draw[very thick] (1,2.25) to (1,3) node[left]{$\ell{+}s$};
	\draw[very thick] (1,-3) node[right]{$\ell{+}s$} to (1,-2.25); 
	\draw[very thick] (1,-2.25) to [out=30, in=270] (1.75,-1.6) 
		to (1.75,0) node[below,yshift=1pt]{$s$}to (1.75,1.6)  to  [out=90,in=330] (1,2.25); 
	\node[yshift=-2pt] at (0,0) {\normalsize$X$};
	\draw[very thick] (.75,1.6) rectangle (-.75,-1.6);
	\draw[very thick] (1,-2.25) to [out=150,in=270] 
		node[right,xshift=-1pt,yshift=-1pt]{$\ell$} (.25,-1.6);
	\draw[very thick] (1,2.25) to [out=210,in=90] 
		node[left,xshift=1pt,yshift=-1pt]{$\ell$} (.25,1.6);
	\draw[very thick] (-.25,3) node[left]{$a$} to (-.25,1.6);
	\draw[very thick] (-.25,-3) node[right]{$a$} to (-.25,-1.6);
\end{tikzpicture} \, .
\]
In other words, 
$\I^{(s)}(X) = (\oone_{a}\boxtimes {}_{(\ell+s)}M_{(\ell,s)})\hComp (X\boxtimes \oone_s) \hComp (\oone_{a}\boxtimes {}_{(\ell,s)}S_{(\ell+s)})$.
We will write $\I:=\I^{(1)}$.
\end{definition}

\begin{remark}
We have\footnote{The isomorphism is given by applying the ``associativity'' relation for 
webs/bimodules, and then ``removing the digon.''} 
$\I^{(s_1)}\circ \I^{(s_2)}(X) \cong \bigoplus_{\qbinom{s_1+s_2}{s_1}}\I^{(s_1+s_2)}$,
so $\I^s(X)\cong \bigoplus_{[s]!} \I^{(s)}(X)$.  
Thus $\I^{(s)}$ may be thought of as the $s^{th}$ divided power of $\I$, 
in the same way that $\E^{(s)}$ and $\F^{(s)}$ are the divided powers of $\E$ and $\F$
in the setting of categorified quantum groups.
\end{remark}

Theorem \ref{thm:coloredskein} will follow almost immediately from the following result.

\begin{proposition}\label{prop:I of Q0}
We have $\MCCSmin_{a,b}^s\cong \I^{(s)}(\MCCSmin_{a,b-s}^0)$.
\end{proposition}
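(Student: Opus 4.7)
The plan is to identify $\I^{(s)}(\MCCSmin_{a,b-s}^0)$ with $\MCCSmin_{a,b}^s$ by matching their chain groups and differentials term-by-term, using a bimodule-level digon decomposition of $\I^{(s)}(W_k^{(a,b-s)})$ as the key input.

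First, for each $0\leq k\leq b-s$, I would establish an isomorphism of graded $(R^{(a,b)},R^{(a,b)})$-bimodules
\[
\I^{(s)}(W_k^{(a,b-s)}) \;\cong\; \qdeg^{-s(k+b+1)}\tdeg^{s}\, W_k^{(a,b)} \otimes \largewedge^{s}[\zeta_{k+1}^{(k)},\ldots,\zeta_{b}^{(k)}].
\]
The web representing $\I^{(s)}(W_k^{(a,b-s)})$ may be rearranged, by naturality of merges and splits with respect to rungs, so that the boundary digon $b\leftrightarrow (b-s,s)$ on the right becomes an interior digon on the $(b-k)$-edge of $W_k^{(a,b)}$. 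The standard partition-indexed decomposition of such a digon (in the spirit of Lemma \ref{lem:thickdigon}) then yields $\bigoplus_{\lambda\in P(s,b-k-s)} \qdeg^{2|\lambda|-s(b-k-s)} W_k^{(a,b)}$, and an appropriately shifted version of Lemma~\ref{lem:bij} identifies this graded multiplicity space with the exterior algebra $\largewedge^{s}[\zeta_{k+1}^{(k)},\ldots,\zeta_{b}^{(k)}]$.

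Since $\I^{(s)}$ is a dg functor and $\MCCSmin_{a,b-s}^0 = \tw_{\d^v+\d^h}\big(\bigoplus P_{k,l,0}^{(a,b-s)}\big)$, tensoring the above isomorphism with the $\largewedge^{l}$-factor and tracking degree shifts gives $\I^{(s)}(P_{k,l,0}^{(a,b-s)}) \cong \qdeg^{-s(b-1)}\tdeg^{-s} P_{k,l,s}^{(a,b)}$, matching the chain groups of $\MCCSmin_{a,b}^s$ via the prefactor in Definition~\ref{def:Qs as subquotient}. It then remains to match the two pieces of the differential. For $\d^v$, which is defined via the rung alphabets $\leftM^{(k)},\rightM^{(k)}$ internal to $W_k$, the functor $\I^{(s)}$ affects only the right boundary and leaves the formula $\sum_i (e_i(\leftM^{(k)})-e_i(\rightM^{(k)}))\otimes(\zeta_i^{(k)})^{\ast}$ intact, so $\I^{(s)}(\d^v)$ produces $\d^v$ on $\MCCSmin_{a,b}^s$. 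For $\d^h$, built from the $\chi_0^+\colon W_k^{(a,b-s)}\to W_{k-1}^{(a,b-s)}$ morphisms, one verifies that under the digon-decomposition isomorphism, $\I^{(s)}(\chi_0^+)$ corresponds exactly to the $s$-preserving components of $\d^H$ recorded in Proposition~\ref{prop:diffKMCSexplicit}, namely the $\chi_m^+$ morphisms with $m=\sum(i_p-j_p)$ summed over the $\zeta_{k+1},\ldots,\zeta_b$-indices.

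The main obstacle is the last compatibility step for $\d^h$: showing that the digon-decomposition identification intertwines the single morphism $\I^{(s)}(\chi_0^+)$ with the multi-valued $\chi_m^+$-differential on $\MCCSmin_{a,b}^s$. While the underlying bimodule decomposition is standard, tracking how $\chi_0^+$ propagates to varying $\chi_m^+$ under the partition-indexed basis is a nontrivial graphical calculation. The strategy is to mimic the ``$\xi$''-to-``$\zeta$'' basis-change computation carried out in the proof of Proposition~\ref{prop:diffKMCSexplicit}, using the extended graphical calculus of \cite{KLMS} (especially the square-flop relation) to propagate the $\chi^+$-morphism through the digon basis; this produces the $\chi_m^+$-differential in exactly the index-difference form dictated by the $s$-preserving part of $\d^H$, completing the identification.
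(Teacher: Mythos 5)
Your approach matches the paper's proof: the key step in both is the digon-removal isomorphism identifying $\I^{(s)}(W_k^{(a,b-s)})$ with a shifted $W_k^{(a,b)} \otimes \largewedge^s[\zeta_{k+1}^{(k)},\ldots,\zeta_b^{(k)}]$ (packaged in the paper as the map $\mu_k$ of Lemma~\ref{lemma:I of Wk}), followed by checking that $\d^v$ is intertwined automatically (it lives on the interior rung edges, which $\I^{(s)}$ does not touch) and that $\d^h$ is intertwined via a partition-indexed graphical computation. One small mis-citation in your sketch: the computation matching $\d^h$-components does not use the square-flop relation (that appears in Lemma~\ref{lemma:dot sliding}) but rather the Pieri-rule expansion $\mathfrak{s}_{\alpha}(\X+z)=\sum_{\lambda}\mathfrak{s}_{\lambda}(\X)z^{m_\lambda}$ over horizontal strips, mirroring the argument simplifying \eqref{eq:lemdiff3} in the proof of Proposition~\ref{prop:interpretation of Rl0}.
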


This proposition requires careful bookkeeping, taken care of by the following.

\begin{lemma}\label{lemma:I of Wk}
For each $0\leq s\leq b$ and each $0\leq k\leq b-s$, 
we have an isomorphism of weight $\qdeg^{s(b+k+1)} \tdeg^{-s}$:
\begin{equation}\label{eq:mu}
\mu_k \colon  
\begin{tikzpicture}[smallnodes, scale=.75,rotate=90,anchorbase]
	\draw[very thick,] (0,.25) to [out=150,in=270] (-.25,1.25) node[left,xshift=2pt]{$a$};
	\draw[very thick,] (.5,.5) to [out=90,in=210] (.75,1);
	\draw[very thick] (.75,1) to (.75,1.25) node[left,xshift=2pt]{$b$};
	\draw[very thick] (0,.25) to node[left,xshift=2pt,yshift=-1pt]{$k$} (.5,.5);
	\draw[very thick] (0,-.25) to (0,.25);
	\draw[very thick] (.5,-.5) to [out=30,in=330] (.5,.5);
	\draw[very thick] (0,-.25) to node[right,xshift=-2pt,yshift=-1pt]{$k$} (.5,-.5);
	\draw[very thick] (.75,-1) to [out=150,in=270] (.5,-.5);
	\draw[very thick] (.75,-1.25) node[right,xshift=-2pt]{$b$} to (.75,-1);
	\draw[very thick] (-.25,-1.25) node[right,xshift=-2pt]{$a$} to [out=90,in=210] (0,-.25);
	\draw[very thick] (.75,-1) to [out=30,in=330] node[above,yshift=-2pt]{$s$} (.75,1);
\end{tikzpicture}
\xrightarrow{\cong} 
W_{k} \otimes \largewedge^s[\zeta^{(k)}_{k+1},\ldots,\zeta^{(k)}_{b}] \, .
\end{equation}
For each integer $m\geq 0$, these isomorphisms fit into a commutative diagram
\[
\begin{tikzcd}[column sep=huge]
\begin{tikzpicture}[smallnodes, scale=.75,rotate=90,anchorbase]
	\draw[very thick,] (0,.25) to [out=150,in=270] (-.25,1.25) node[left,xshift=2pt]{$a$};
	\draw[very thick,] (.5,.5) to [out=90,in=210] (.75,1);
	\draw[very thick] (.75,1) to (.75,1.25) node[left,xshift=2pt]{$b$};
	\draw[very thick] (0,.25) to node[left=-3pt,yshift=-1pt]{$k$} (.5,.5);
	\draw[very thick] (0,-.25) to (0,.25);
	\draw[very thick] (.5,-.5) to [out=30,in=330] (.5,.5);
	\draw[very thick] (0,-.25) to node[right=-3pt,yshift=-1pt]{$k$} (.5,-.5);
	\draw[very thick] (.75,-1) to [out=150,in=270] (.5,-.5);
	\draw[very thick] (.75,-1.25) node[right,xshift=-2pt]{$b$} to (.75,-1);
	\draw[very thick] (-.25,-1.25) node[right,xshift=-2pt]{$a$} to [out=90,in=210] (0,-.25);
	\draw[very thick] (.75,-1) to [out=30,in=330] node[above,yshift=-2pt]{$s$} (.75,1);
\end{tikzpicture}
\ar[r,"I^{(s)}(\chi^+_m)"] \ar[d,"\mu_k"]
& 
\begin{tikzpicture}[smallnodes, scale=.75,rotate=90,anchorbase]
	\draw[very thick,] (0,.25) to [out=150,in=270] (-.25,1.25) node[left,xshift=2pt]{$a$};
	\draw[very thick,] (.5,.5) to [out=90,in=210] (.75,1);
	\draw[very thick] (.75,1) to (.75,1.25) node[left,xshift=2pt]{$b$};
	\draw[very thick] (0,.25) to node[left=-3pt,yshift=-1pt]{$k{-}1$} (.5,.5);
	\draw[very thick] (0,-.25) to (0,.25);
	\draw[very thick] (.5,-.5) to [out=30,in=330] (.5,.5);
	\draw[very thick] (0,-.25) to node[right=-3pt,yshift=-1pt]{$k{-}1$} (.5,-.5);
	\draw[very thick] (.75,-1) to [out=150,in=270] (.5,-.5);
	\draw[very thick] (.75,-1.25) node[right,xshift=-2pt]{$b$} to (.75,-1);
	\draw[very thick] (-.25,-1.25) node[right,xshift=-2pt]{$a$} to [out=90,in=210] (0,-.25);
	\draw[very thick] (.75,-1) to [out=30,in=330] node[above,yshift=-2pt]{$s$} (.75,1);
\end{tikzpicture}
\ar[d,"\mu_{k-1}"] \\
W_{k} \otimes \largewedge^s[\zeta^{(k)}_{k+1},\ldots,\zeta^{(k)}_{b}]
\ar[r,"f"]
&
W_{k-1} \otimes \largewedge^s[\zeta^{(k-1)}_{k},\ldots,\zeta^{(k-1)}_{b}]
\end{tikzcd}
\]
where, for $k+1 \leq i_1 < \cdots < i_s \leq b$ and 
$k+1 \leq j_1 < \cdots < j_s \leq b$, the component
\[
W_{k} \otimes \zeta^{(k)}_{i_1}\cdots \zeta^{(k)}_{i_s}
\xrightarrow{f} W_{k-1} \otimes  \zeta^{(k-1)}_{j_1}\cdots \zeta^{(k-1)}_{j_s}
\]
is zero unless $i_p-j_p\in\{0,1\}$ for all $k+1\leq p\leq b$. 
In this case, it equals $\chi^+_{m+n}$ where $n=\sum_p(i_p-j_p)$.
\end{lemma}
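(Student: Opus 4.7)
The plan is to construct $\mu_k$ via a thick digon decomposition, following the strategy of Proposition~\ref{prop:interpretation of Rl0}, and then to verify commutativity of the displayed square by direct computation in the extended graphical calculus of \cite{KLMS}.

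First, recognize that the left-hand side of \eqref{eq:mu} is precisely $\I^{(s)}(W_k)$, where $W_k = \F^{(k)}\E^{(k)}\oone_{a,b-s}$. Using the web associativity relation, I would rewrite $\I^{(s)}(W_k)$ so that a clean two-strand thick digon, formed by the $s$-thread and a strand of thickness $b-k-s$ coming from the ``inner'' $(b-s)$-strand, appears adjacent to the $\F^{(k)}\E^{(k)}$-configuration of $W_k$. Applying Lemma~\ref{lem:thickdigon} to this digon decomposes it as
\[
\bigoplus_{\alpha\in P(b-k-s,\, s)} \qdeg^{2|\alpha|-(b-k-s)s}\,\F^{(b-k)},
\]
and incorporating this decomposition into the rest of the web produces an isomorphism
\[
\I^{(s)}(W_k) \;\cong\; \bigoplus_{\alpha\in P(b-k-s,\, s)} \qdeg^{2|\alpha|-(b-k-s)s+c}\,W_k
\]
for an explicit uniform shift $c$ coming from the web manipulations. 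Lemma~\ref{lem:bij} with $r=b-k-s$ then identifies this direct sum with $W_k\otimes\largewedge^s[\zeta^{(k)}_{k+1},\ldots,\zeta^{(k)}_b]$ as bigraded bimodules; tracking the shifts from Lemmata \ref{lem:thickdigon} and \ref{lem:bij} together with $c$ confirms that the resulting isomorphism $\mu_k$ has overall weight $\qdeg^{s(b+k+1)}\tdeg^{-s}$.

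For the commutative square, I would compute $\mu_{k-1}\circ \I^{(s)}(\chi^+_m)\circ \mu_k^{-1}$ componentwise in the partition parametrization. The morphism $\I^{(s)}(\chi^+_m)$ inserts a decorated $\chi^+_m$ cup alongside the Schur-decorated digon decomposition; expanding via the symmetric-function identity
\[
\mathfrak{s}_\alpha(\X+z) \;=\; \sum_{\lambda} \mathfrak{s}_\lambda(\X)\,z^{|\alpha|-|\lambda|}
\]
(where $\lambda$ ranges over partitions such that $\alpha/\lambda$ is a horizontal strip), exactly as in the proof of Proposition~\ref{prop:interpretation of Rl0}, the result collapses via Lemma~\ref{lem:thickdigon} to the single morphism $\chi^+_{m+|\alpha|-|\lambda|}$ from the $\alpha$-summand in the domain to the $\lambda$-summand in the codomain. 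Under the bijection of Lemma~\ref{lem:bij}, the horizontal-strip condition translates to $i_p-j_p\in\{0,1\}$ for all $p$, and $|\alpha|-|\lambda|=\sum_p(i_p-j_p)=n$, yielding the claimed formula.

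The principal technical obstacle is the web-level manipulation needed to bring $\I^{(s)}(W_k)$ into a form where Lemma~\ref{lem:thickdigon} applies cleanly, together with careful tracking of signs, grading shifts, and the precise partition rectangle indexing the digon decomposition. Once the setup is in place, the remaining computations are straightforward applications of relations already exploited in this section.
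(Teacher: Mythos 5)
Your strategy is essentially the paper's, but there are two subtleties you gloss over. First, once the web-associativity rewrite isolates the digon, that digon is a genuine MOY bubble, i.e.\ an endomorphism of $\oone_{b-k}$ in $\SSBim_{b-k}$; it is \emph{not} the image of $\F^{(s)}\F^{(b-k-s)}$ under $\Phi$ in the ambient $(a,b)$-context, so Lemma~\ref{lem:thickdigon} does not apply verbatim. The paper constructs the digon removal isomorphism directly in the foam/bimodule formalism, via the $\col$ and $\cre$ morphisms (Appendix~\ref{s:FoamSSBim}) decorated with Schur polynomials, and cites \cite[(3.10), (3.11)]{QR} to check they are inverse to each other. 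Your plan can be salvaged by translating Lemma~\ref{lem:thickdigon} into a statement about the web bubble---the underlying combinatorics is the same---but this translation requires justification beyond a direct citation.

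Second, when you invoke Lemma~\ref{lem:bij} to index the summands by monomials $\zeta^{(k)}_j$, the paper explicitly warns that the bijection it uses differs from the one in Lemma~\ref{lem:bij} by the involution $S\mapsto\bar{S}$ reversing a binary sequence, and that the roles of the $i$'s and $j$'s are also flipped. This is needed to align the Schur-labeled components of the digon with the correct monomials $\zeta^{(k)}_S$, and it matters for the sign and grading bookkeeping in the commutative square. Once these two points are addressed, the remainder---verifying commutativity of $f = \mu_{k-1}\circ \I^{(s)}(\chi^+_m)\circ\mu_k^{-1}$ via the horizontal-strip expansion $\mathfrak{s}_\alpha(\X+z) = \sum_\lambda \mathfrak{s}_\lambda(\X)\,z^{|\alpha|-|\lambda|}$, as in the proof of Proposition~\ref{prop:interpretation of Rl0}---is precisely what the paper does.
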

\begin{proof}
The isomorphism $\mu_k$ is defined to be the composition of
\[
\begin{tikzpicture}[smallnodes, scale=.75,rotate=90,baseline=0em]
	\draw[very thick,] (0,.25) to [out=150,in=270] (-.25,1.25) node[left,xshift=2pt]{$a$};
	\draw[very thick,] (.5,.5) to [out=90,in=210] (.75,1);
	\draw[very thick] (.75,1) to (.75,1.25) node[left,xshift=2pt]{$b$};
	\draw[very thick] (0,.25) to node[left,xshift=2pt,yshift=-1pt]{$k$} (.5,.5);
	\draw[very thick] (0,-.25) to (0,.25);
	\draw[very thick] (.5,-.5) to [out=30,in=330] (.5,.5);
	\draw[very thick] (0,-.25) to node[right,xshift=-2pt,yshift=-1pt]{$k$} (.5,-.5);
	\draw[very thick] (.75,-1) to [out=150,in=270] (.5,-.5);
	\draw[very thick] (.75,-1.25) node[right,xshift=-2pt]{$b$} to (.75,-1);
	\draw[very thick] (-.25,-1.25) node[right,xshift=-2pt]{$a$} to [out=90,in=210] (0,-.25);
	\draw[very thick] (.75,-1) to [out=30,in=330] node[above,yshift=-2pt]{$s$} (.75,1);
\end{tikzpicture}
\xrightarrow{\cong}
\begin{tikzpicture}[smallnodes,rotate=90,baseline=0em]
	\draw[very thick] (0,.375) to [out=150,in=270] (-.25,1) node[left,xshift=2pt]{$a$};
	\draw[very thick] (.5,.625) to (.5,1) node[left,xshift=2pt]{$b$};
	\draw[very thick] (0,.375) to node[left,xshift=2pt,yshift=-1pt]{$k$} (.5,.625);
	\draw[very thick] (0,-.375) to (0,.375);
	\draw[very thick] (.75,.25) to [out=90,in=330] (.5,.625);
	\draw[very thick] (.5,-.625) to [out=30,in=270] (.75,-.25);
	\draw[very thick] (.75,-.25) to [out=30,in=330] node[above,yshift=-2pt]{$s$} (.75,.25);
	\draw[very thick] (.75,-.25) to [out=150,in=210] (.75,.25);
	\draw[very thick] (0,-.375) to node[right,xshift=-2pt,yshift=-1pt]{$k$} (.5,-.625);
	\draw[very thick] (.5,-1) node[right,xshift=-2pt]{$b$} to (.5,-.625);
	\draw[very thick] (-.25,-1)node[right,xshift=-2pt]{$a$} to [out=90,in=210] (0,-.375);
\end{tikzpicture}
\]
followed by the ``digon removal'' isomorphism described as follows.  
Let $S\subset \{k+1, \ldots,b\}$ with $|S|=s$, and set $S^c:= \{k+1,\ldots,b\}\setminus S$.
We may write $S=\{i_1<\cdots<i_s\}$ and $S^c= \{j_1<\cdots<j_{b-k-s}\}$. 
With this notation in place, define
$\zeta^{(k)}_S:=\zeta^{(k)}_{i_1}\cdots \zeta^{(k)}_{i_s}$ and 
$\a(\bar{S})_{b-k-s-m+1} :=  \#\{e\in \{1,\ldots,s\} \: | \: i_e < j_m\}$.
Using this setup, and the alphabet labeling conventions for the digon:
\[
\begin{tikzpicture}[scale=.5,rotate=90,anchorbase]
	\draw[very thick] (1,1) to (1,2) node[left=-2pt]{$\B$};
	\draw[very thick] (1,-2) node[right=-2pt]{$\B'$} to (1,-1) ; 
	\draw[very thick] (1,-1) to [out=150,in=210] node[below=-1pt]{$\D$}(1,1);
	\draw[very thick] (1,-1) to [out=30,in=330] node[above=-2pt]{$\Eb$} (1,1); 
\end{tikzpicture}
\]
we have the isomorphism
\begin{equation}\label{eq:digon-morphisms}
\begin{tikzpicture}[scale=.5,rotate=90,anchorbase,smallnodes]
	\draw[very thick] (1,1) to (1,2) node[left=-2pt]{$b{-}k$};
	\draw[very thick] (1,-2) to (1,-1) ; 
	\draw[very thick] (1,-1) to [out=150,in=210] node[below=-1pt]{$b{-}k{-}s$}(1,1);
	\draw[very thick] (1,-1) to [out=30,in=330] node[above=-2pt]{$s$} (1,1); 
\end{tikzpicture}
\xrightarrow{\bigoplus_S \col \circ \Schur_{\alpha(\bar{S})}(\D)}
\bigoplus_S \oone_{b{-}k}
\otimes \zeta_{S}^{(k)} 
\xrightarrow{\bigoplus_S (-1)^{|\widehat{\alpha(\bar{S})}|} \cre \circ \Schur_{\widehat{\alpha(\bar{S})}}(\Eb)}
\begin{tikzpicture}[scale=.5,rotate=90,anchorbase,smallnodes]
	\draw[very thick] (1,1) to (1,2);
	\draw[very thick] (1,-2) node[right=-2pt]{$b{-}k$} to (1,-1) ; 
	\draw[very thick] (1,-1) to [out=150,in=210] node[below=-1pt]{$b{-}k{-}s$}(1,1);
	\draw[very thick] (1,-1) to [out=30,in=330] node[above=-2pt]{$s$} (1,1); 
\end{tikzpicture}
\end{equation}
Here, the bimodule morphisms $\col$ and $\cre$ are given in Appendix \ref{s:FoamSSBim}.
Note that the correspondence between degree-$s$ monomials $\zeta_S$ and 
partitions $\alpha(\bar{S}) \in P(b-k-s,s)$ used here differs from 
the standard bijection\footnote{Note also that the roles of $i$ and $j$ are opposite 
to Lemma \ref{lem:bij}; the indices $i_e$ here index the terms in the monomial $\zeta_S$, 
thus correspond to $1$'s in the corresponding binary sequence.} 
from Lemma \ref{lem:bij} by the symmetry
$S \mapsto \bar{S}$ that reverses the order of a binary sequence. 
Nonetheless, \cite[Equations (3.10) and (3.11)]{QR} imply that
\eqref{eq:digon-morphisms} define inverse isomorphisms. 
The degree of the map $\mu_k$ obtained in this way can be deduced by 
comparing minimal degree summands. 

Finally, the statement concerning the components of $f$ holds since the
map $f:=\mu_{k-1} \circ I^{(s)}(\chi_m^+) \circ \mu_k\inv$ 
can be simplified in a manner analogous to the computation that 
simplifies \eqref{eq:lemdiff3} in the proof of Proposition \ref{prop:interpretation of Rl0}.
(Alternatively, this can be computed explicitly using foams.) 
\end{proof}

\begin{proof}[Proof of Proposition \ref{prop:I of Q0}]
By definition, 
\[
\I^{(s)}(\MCCSmin_{a,b-s}^0) = 
\left(\bigoplus_{0\leq l\leq k\leq b-s} P_{k,l,s}', (\d^v)' + (\d^h)'\right) \, ,
\]
where
\begin{equation}\label{eq:Pprime}
P_{k,l,s}' :=
\qdeg^{k(a-b+s+1)-2(b-s)} \tdeg^{2(b-s)-k}
\begin{tikzpicture}[smallnodes, anchorbase, scale=.75,rotate=90,baseline=.4em]
	\draw[very thick,] (0,.25) to [out=150,in=270] (-.25,1.25) node[left,xshift=2pt]{$a$};
	\draw[very thick,] (.5,.5) to [out=90,in=210] (.75,1);
	\draw[very thick] (.75,1) to (.75,1.25) node[left,xshift=2pt]{$b$};
	\draw[very thick] (0,.25) to node[left,xshift=2pt,yshift=-1pt]{$k$} (.5,.5);
	\draw[very thick] (0,-.25) to (0,.25);
	\draw[very thick] (.5,-.5) to [out=30,in=330] (.5,.5);
	\draw[very thick] (0,-.25) to node[right,xshift=-2pt,yshift=-1pt]{$k$} (.5,-.5);
	\draw[very thick] (.75,-1) to [out=150,in=270] (.5,-.5);
	\draw[very thick] (.75,-1.25) node[right,xshift=-2pt]{$b$} to (.75,-1);
	\draw[very thick] (-.25,-1.25) node[right,xshift=-2pt]{$a$} to [out=90,in=210] (0,-.25);
	\draw[very thick] (.75,-1) to [out=30,in=330] node[above,yshift=-2pt]{$s$} (.75,1);
\end{tikzpicture}
\otimes \largewedge^l[\zeta^{(k)}_1,\ldots,\zeta^{(k)}_k] \, .
\end{equation}
Here, $(\d^v)'=\I^{(s)}(\d^v)$ and $(\d^h)'=\I^{(s)}(\d^h)$ where $\d^v,\d^h$ 
in this instance are the differentials on $\MCCSmin_{a,b-s}^{0}$
from Proposition~\ref{prop:KMCSdiffs}.
Moreover, recall from Definition~ \ref{def:Qs as subquotient} that
\[
\MCCSmin_{a,b}^s = \left(\bigoplus_{0\leq l\leq k\leq b-s} 
\qdeg^{-s(b-1)} \tdeg^{-s} P_{k,l,s}, \d^v+\d^h\right),
\]
where
\[
\qdeg^{-s(b-1)} \tdeg^{-s} P_{k,l,s} = \qdeg^{k(a-b+1)-sb-2b+s} \tdeg^{2b-k-s} 
W_k
\otimes \largewedge^l[\zeta^{(k)}_1,\ldots, \zeta^{(k)}_k]\otimes 
\largewedge^s[\zeta^{(k)}_{k+1},\ldots, \zeta^{(k)}_b].
\]
Lemma \ref{lemma:I of Wk} implies that $\qdeg^{-s(b-1)} \tdeg^{-s} P_{k,l,s}$
and $P_{k,l,s}'$ are isomorphic.  This isomorphism involves the natural
isomorphism which swaps the order of tensor factors
$\largewedge[\zeta^{(k)}_1,\ldots,\zeta^{(k)}_k]\otimes
\largewedge^s[\zeta^{(k)}_{k+1},\ldots,\zeta^{(k)}_b] \cong
\largewedge^s[\zeta^{(k)}_{k+1}, \ldots,\zeta^{(k)}_b]\otimes
\largewedge[\zeta^{(k)}_1,\ldots,\zeta^{(k)}_k]$.  
By slight abuse of the notation from Lemma \ref{lemma:I of Wk}, 
we also denote this isomorphism by 
$\mu_k:P_{k,l,s}'\rightarrow \qdeg^{-s(b-1)} \tdeg^{-s} P_{k,l,s}$.

It remains to show that the isomorphisms $\mu_k$ intertwine the
differentials $\d^v$, $\d^h$ with the differentials $(\d^v)'$, $(\d^h)'$, 
i.e.~ that $\mu_{k-1} \circ (\d^v)' \circ \mu_k\inv = \d^v$ and 
$\mu_{k-1} \circ (\d^h)' \circ \mu_k\inv = \d^h$. 
For the vertical differentials, this is immediate since these differentials are of Koszul type in both complexes, 
acting by differences of elementary symmetric polynomials on the $k$-labeled ``rungs'' of the web.
Such endomorphisms commute with the digon removal isomorphism \eqref{eq:mu}. 

To compare the horizontal differentials, we explicitly match the components of
$\mu_{k-1} \circ (\d^h)' \circ \mu_k\inv$ with those of $\d^h$ using Lemma
\ref{lemma:I of Wk}.  Suppose we have subsets
\[
S=\{i_1<\cdots<i_l\}\subset \{1,\ldots,k\} 
\, , \quad
T=\{j_1<\cdots<j_s\}\subset \{k+1,\ldots,b\}
\]
\[
S'=\{i_1'<\cdots<i_l'\}\subset \{1,\ldots,k-1\}
\, , \quad 
T'=\{j_1'<\cdots<j_s'\}\subset \{k,\ldots,b\}
\]
with $i_p-i_p'\in\{0,1\}$ and $j_p-j_p'\in\{0,1\}$ for all $p$.  The
corresponding component
\[
W_k\otimes \zeta^{(k)}_{S\cup T}  \xrightarrow{\d^h} W_{k-1}\otimes \zeta^{(k-1)}_{S'\cup T'}
\]
is $\chi^+_{m+n}$ where $m=\sum_p(i_p-i_p')$ and $n=\sum_p(j_p-j_p')$ 
(and all nonzero components of $\d^h$ are of this form).  
Now, Lemma \ref{lemma:I of Wk} gives us commutative squares
\[
\begin{tikzcd}[column sep=6em]
\begin{tikzpicture}[smallnodes, scale=.75,rotate=90,anchorbase]
	\draw[very thick,] (0,.25) to [out=150,in=270] (-.25,1.25) node[left,xshift=2pt]{$a$};
	\draw[very thick,] (.5,.5) to [out=90,in=210] (.75,1);
	\draw[very thick] (.75,1) to (.75,1.25) node[left,xshift=2pt]{$b$};
	\draw[very thick] (0,.25) to node[left=-3pt,yshift=-1pt]{$k$} (.5,.5);
	\draw[very thick] (0,-.25) to (0,.25);
	\draw[very thick] (.5,-.5) to [out=30,in=330] (.5,.5);
	\draw[very thick] (0,-.25) to node[right=-3pt,yshift=-1pt]{$k$} (.5,-.5);
	\draw[very thick] (.75,-1) to [out=150,in=270] (.5,-.5);
	\draw[very thick] (.75,-1.25) node[right,xshift=-2pt]{$b$} to (.75,-1);
	\draw[very thick] (-.25,-1.25) node[right,xshift=-2pt]{$a$} to [out=90,in=210] (0,-.25);
	\draw[very thick] (.75,-1) to [out=30,in=330] node[above,yshift=-2pt]{$s$} (.75,1);
\end{tikzpicture} \otimes \zeta^{(k)}_S
\ar[r,"(\d^h)' = I^{(s)}(\chi^+_m)"]
& 
\begin{tikzpicture}[smallnodes, scale=.75,rotate=90,anchorbase]
	\draw[very thick,] (0,.25) to [out=150,in=270] (-.25,1.25) node[left,xshift=2pt]{$a$};
	\draw[very thick,] (.5,.5) to [out=90,in=210] (.75,1);
	\draw[very thick] (.75,1) to (.75,1.25) node[left,xshift=2pt]{$b$};
	\draw[very thick] (0,.25) to node[left=-3pt,yshift=-1pt]{$k{-}1$} (.5,.5);
	\draw[very thick] (0,-.25) to (0,.25);
	\draw[very thick] (.5,-.5) to [out=30,in=330] (.5,.5);
	\draw[very thick] (0,-.25) to node[right=-3pt,yshift=-1pt]{$k{-}1$} (.5,-.5);
	\draw[very thick] (.75,-1) to [out=150,in=270] (.5,-.5);
	\draw[very thick] (.75,-1.25) node[right,xshift=-2pt]{$b$} to (.75,-1);
	\draw[very thick] (-.25,-1.25) node[right,xshift=-2pt]{$a$} to [out=90,in=210] (0,-.25);
	\draw[very thick] (.75,-1) to [out=30,in=330] node[above,yshift=-2pt]{$s$} (.75,1);
\end{tikzpicture} \otimes \zeta^{(k-1)}_{S'} \\
W_{k} \otimes  \zeta^{(k)}_{S \cup T}
\ar[r,"\d^h = \chi^+_{m+n}"] \ar[u,"\mu_{k}\inv"]
&
W_{k-1} \otimes  \zeta^{(k-1)}_{S' \cup T'}
\ar[u,"\mu_{k-1}\inv"]
\end{tikzcd}
\] 
in which the vertical arrows are restrictions of $\mu_{k}\inv$ and $\mu_{k-1}\inv$ 
from Lemma \ref{lemma:I of Wk} to the indicated direct summands.
Taking the direct sum over all such $S,T,S',T'$ shows that the isomorphisms $\mu_k$ 
intertwine the horizontal differentials.
\end{proof}

\begin{cor}\label{cor:MCCSmin topological} 
$\MCCSmin_{a,b}^s\simeq \MCCS_{a,b}^s$.
\end{cor}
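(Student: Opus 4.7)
The plan is to combine the two main ingredients developed in the preceding subsections. By Proposition~\ref{prop:I of Q0}, we have an isomorphism
\[
\MCCSmin_{a,b}^s \;\cong\; \I^{(s)}\!\left(\MCCSmin_{a,b-s}^0\right),
\]
while Theorem~\ref{thm:Q0 is FT}, applied with $b$ replaced by $b-s$, gives a homotopy equivalence $\MCCSmin_{a,b-s}^0 \simeq \MCCS_{a,b-s}^0$. So the corollary reduces to promoting the latter equivalence through $\I^{(s)}$ and then recognizing the resulting complex as $\MCCS_{a,b}^s$.

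For the first step, the functor $\I^{(s)}$ is defined as horizontal composition with fixed merge and split singular Bott--Samelson bimodules (together with external tensor product with $\oone_s$), so it extends term-wise to a dg functor $\CS_{a,b-s} \to \CS_{a,b}$. As a dg functor it preserves homotopy equivalences. Chaining with Proposition~\ref{prop:I of Q0} therefore yields
\[
\MCCSmin_{a,b}^s \;\cong\; \I^{(s)}\!\left(\MCCSmin_{a,b-s}^0\right) \;\simeq\; \I^{(s)}\!\left(\MCCS_{a,b-s}^0\right).
\]

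For the second step, unpacking the definition of $\I^{(s)}$ identifies $\I^{(s)}(\MCCS_{a,b-s}^0)$ on the nose with $\MCCS_{a,b}^s$: it is the Rickard complex of the braided web obtained by splitting each top and bottom $b$-labeled boundary strand into a $(b-s,s)$-pair via merge/split bimodules, inserting the $(a,b-s)$-colored full twist on the $a$ and $(b{-}s)$ strands, and leaving the $s$-strand as an identity. This is precisely the threaded digon depicted in Definition~\ref{def:MCS and KMCS}. At worst one applies a fork-slide move \eqref{eq:forkslide} to bring the web diagram into the exact form used to define $\MCCS_{a,b}^s$; there is no real obstacle at this stage, since all the substantive work has already been carried out in Proposition~\ref{prop:I of Q0} (the combinatorial identification) and Theorem~\ref{thm:Q0 is FT} (the topological identification of the shifted Rickard complex with the full twist).
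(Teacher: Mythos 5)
Your proof is correct and follows exactly the same route as the paper: apply Proposition~\ref{prop:I of Q0} to write $\MCCSmin_{a,b}^s \cong \I^{(s)}(\MCCSmin_{a,b-s}^0)$, invoke Theorem~\ref{thm:Q0 is FT} (with $b$ replaced by $b-s$) inside $\I^{(s)}$, and then recognize $\I^{(s)}$ of the $(a,b{-}s)$-colored full twist as the threaded digon defining $\MCCS_{a,b}^s$. One small remark: no fork-slide is actually needed in the last step, since unwinding Definition~3.2 shows $\I^{(s)}$ applied to the full twist literally produces the diagram in Definition~\ref{def:MCS and KMCS}.
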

\begin{proof}
We have
\[
\MCCSmin_{a,b}^s \cong \I^{(s)}(\MCCSmin_{a,b-s}^0)
\simeq 
\I^{(s)}\left(
\left\llbracket
\begin{tikzpicture}[scale=.4,smallnodes,anchorbase,rotate=90]
	\draw[very thick] (1,0) to [out=90,in=270] (0,1.5);
	\draw[line width=5pt,color=white] (1,-1.5) to [out=90,in=270] (0,0) 
		to [out=90,in=270] (1,1.5);
	\draw[very thick] (1,-1.5) node[right=-2pt]{$b{-}s$} to [out=90,in=270] (0,0) 
		to [out=90,in=270] (1,1.5);
	\draw[line width=5pt,color=white] (0,-1.5) to [out=90,in=270] (1,0);
	\draw[very thick] (0,-1.5) node[right=-2pt]{$a$} to [out=90,in=270] (1,0);
\end{tikzpicture}
\right\rrbracket
\right) 
=
\left\llbracket
\begin{tikzpicture}[scale=.35,smallnodes,rotate=90,anchorbase]
\draw[very thick] (1,-1) to [out=150,in=270] (0,0); 
\draw[line width=5pt,color=white] (0,-2) to [out=90,in=270] (.5,0) to [out=90,in=270] (0,2);
\draw[very thick] (0,-2) node[right=-2pt]{$a$} to [out=90,in=270] (.5,0) 
	to [out=90,in=270] (0,2) node[left=-2pt]{$a$};
\draw[very thick] (1,1) to (1,2) node[left=-2pt]{$b$};
\draw[line width=5pt,color=white] (0,0) to [out=90,in=210] (1,1); 
\draw[very thick] (0,0) to [out=90,in=210] (1,1); 
\draw[very thick] (1,-2) node[right=-2pt]{$b$} to (1,-1); 
\draw[very thick] (1,-1) to [out=30,in=330] node[above=-2pt]{$s$} (1,1); 
\end{tikzpicture}
\right\rrbracket 
= \MCCS_{a,b}^s
\]
where the first isomorphism holds by Proposition \ref{prop:I of Q0} and the
homotopy equivalence follows from Theorem \ref{thm:Q0 is FT}. 
\end{proof}

\begin{proof}[Proof of Theorem~\ref{thm:coloredskein}]
We have that
\begin{align*}
\qdeg^{b(a-b-1)}\tdeg^b K
\left( 
\left\llbracket
\begin{tikzpicture}[scale=.35,smallnodes,anchorbase,rotate=270]
	\draw[very thick] (1,-1) to [out=150,in=270] (0,1) to (0,2) node[right=-2pt]{$b$}; 
	\draw[line width=5pt,color=white] (0,-2) to (0,-1) to [out=90,in=210] (1,1);
	\draw[very thick] (0,-2) node[left=-2pt]{$b$} to (0,-1) to [out=90,in=210] (1,1);
	\draw[very thick] (1,1) to (1,2) node[right=-2pt]{$a$};
	\draw[very thick] (1,-2) node[left=-2pt]{$a$} to (1,-1); 
	\draw[very thick] (1,-1) to [out=30,in=330] node[below=-1pt]{$a{-}b$} (1,1); 
\end{tikzpicture}
\right\rrbracket
\right)
= \qdeg^{b(a-b-1)}\tdeg^b\KMCS_{a,b} &\stackrel{\eqref{eq:topological MCS}}{\simeq} \qdeg^{b(a-b-1)}\tdeg^b\KMCSmin_{a,b} \\
&\stackrel{\eqref{eq:MainThmMin}}{\cong} \tw_{\d^c}\left(\bigoplus_{s=0}^b\qdeg^{s(b-1)} \tdeg^s\MCCSmin_{a,b}^s\right) \, .
\end{align*}
The latter is a one-sided twisted complex (see Definition \ref{def:twist}) since $\d^c$ strictly increases the index $s$.
Corollary \ref{cor:MCCSmin topological}, 
together with standard homological perturbation techniques 
(see \cite[Crude Perturbation Lemma]{Markl} or \cite[Corollary 4.10]{Hog3}), 
gives us a homotopy equivalence
\[
\tw_{\d^c}\left(\bigoplus_{s=0}^b\qdeg^{s(b-1)} \tdeg^s  \MCCSmin_{a,b}^s\right) 
\simeq 
\tw_{D^c}\left(\bigoplus_{s=0}^b \qdeg^{s(b-1)} \tdeg^s
\left\llbracket
\begin{tikzpicture}[scale=.35,smallnodes,rotate=90,anchorbase]
\draw[very thick] (1,-1) to [out=150,in=270] (0,0); 
\draw[line width=5pt,color=white] (0,-2) to [out=90,in=270] (.5,0) to [out=90,in=270] (0,2);
\draw[very thick] (0,-2) node[right=-2pt]{$a$} to [out=90,in=270] (.5,0) 
	to [out=90,in=270] (0,2) node[left=-2pt]{$a$};
\draw[very thick] (1,1) to (1,2) node[left=-2pt]{$b$};
\draw[line width=5pt,color=white] (0,0) to [out=90,in=210] (1,1); 
\draw[very thick] (0,0) to [out=90,in=210] (1,1); 
\draw[very thick] (1,-2) node[right=-2pt]{$b$} to (1,-1); 
\draw[very thick] (1,-1) to [out=30,in=330] node[above=-2pt]{$s$} (1,1); 
\end{tikzpicture}
\right\rrbracket 
\right)
\]
for some twist $D^c$, which also strictly increases the index $s$.
\end{proof}

\appendix

\section{Foams and singular Soergel bimodules}\label{s:FoamSSBim}

As is well-known in certain circles, 
the main results of \cite{WebSchur} and \cite{QR} taken together imply that the 
$k\to \infty$ limit of the monoidal $2$-category of ``enhanced $\slnn{k}$ foams''
(i.e. $\glnn{k}$ foams) from \cite{QR} is equivalent to the monoidal $2$-category 
of singular Bott-Samelson bimodules.

We record the bimodule morphisms corresponding 
to the (non-isomorphism) generating foams in \cite[Definition 3.1]{QR}. 
Let $\partial_i\colon \K[x_1,\ldots,x_N] \to \K[x_1,\ldots,x_N]$ be the $i^{th}$
Demazure operator
\[
\partial_i(f) := 
\frac{f(\ldots,x_i,x_{i+1},\ldots) - f(\ldots,x_{i+1},x_i,\ldots)}{x_i - x_{i+1}}
\]
and let $\partial_{a,b} \colon R^{a,b} \to R^{a+b}$ be the Sylvester operator
\[
\partial_{a,b} :=
(\partial_{b}\cdots \partial_{1})(\partial_{b+1} \cdots \partial_{2}) \cdots
	(\partial_{a+b-1} \cdots \partial_a) \, .
\]
We now record
\begin{align*}
\un :=
\begin{tikzpicture} [scale=.45,fill opacity=0.2,anchorbase,tinynodes]
	\path [fill=red] (4.25,-.5) to (4.25,2) to [out=170,in=10] (-.5,2) to (-.5,-.5) to 
		[out=0,in=225] (.75,0) to [out=90,in=180] (1.625,1.25) to [out=0,in=90] 
			(2.5,0) to [out=315,in=180] (4.25,-.5);
	\path [fill=red] (3.75,.5) to (3.75,3) to [out=190,in=350] (-1,3) to (-1,.5) to 
		[out=0,in=135] (.75,0) to [out=90,in=180] (1.625,1.25) to [out=0,in=90] 
			(2.5,0) to [out=45,in=180] (3.75,.5);
	\path[fill=blue] (.75,0) to [out=90,in=180] (1.625,1.25) to [out=0,in=90] (2.5,0);
	\draw [very thick,directed=.55] (2.5,0) to (.75,0);
	\draw [very thick,directed=.55] (.75,0) to [out=135,in=0] (-1,.5);
	\draw [very thick,directed=.55] (.75,0) to [out=225,in=0] (-.5,-.5);
	\draw [very thick,directed=.55] (3.75,.5) to [out=180,in=45] (2.5,0);
	\draw [very thick,directed=.55] (4.25,-.5) to [out=180,in=315] (2.5,0);
	\draw [very thick, red, directed=.75] (.75,0) to [out=90,in=180] (1.625,1.25)
		to [out=0,in=90] (2.5,0);
	\draw [very thick] (3.75,3) to (3.75,.5);
	\draw [very thick] (4.25,2) to (4.25,-.5);
	\draw [very thick] (-1,3) to (-1,.5);
	\draw [very thick] (-.5,2) to (-.5,-.5);
	\draw [very thick,directed=.55] (4.25,2) to [out=170,in=10] (-.5,2);
	\draw [very thick, directed=.55] (3.75,3) to [out=190,in=350] (-1,3);
	\node [blue, opacity=1]  at (1.625,.5) {$a{+}b$};
	\node[red, opacity=1] at (3.5,2.6) {$b$};
	\node[red, opacity=1] at (4,1.75) {$a$};		
\end{tikzpicture}
\longleftrightarrow
\left\{
\begin{aligned}
R^{a,b} \otimes_{R^{a+b}} R^{a,b} &\to R^{a,b} \\
f \otimes g &\mapsto fg
\end{aligned} \right.
\, &, \quad
\col :=
\begin{tikzpicture} [scale=.5,fill opacity=0.2,anchorbase,tinynodes]
	\path[fill=blue] (-.75,-4) to [out=90,in=180] (0,-2.5) to [out=0,in=90] 
		(.75,-4) .. controls (.5,-4.5) and (-.5,-4.5) .. (-.75,-4);
	\path[fill=red] (-.75,-4) to [out=90,in=180] (0,-2.5) to [out=0,in=90] (.75,-4) -- 
		(2,-4) -- (2,-1) -- (-2,-1) -- (-2,-4) -- (-.75,-4);
	\path[fill=blue] (-.75,-4) to [out=90,in=180] (0,-2.5) to [out=0,in=90] 
		(.75,-4) .. controls (.5,-3.5) and (-.5,-3.5) .. (-.75,-4);
	\draw[very thick, directed=.55] (2,-1) -- (-2,-1);
	\path (.75,-1) .. controls (.5,-.5) and (-.5,-.5) .. (-.75,-1); 
	\draw [very thick, red, directed=.65] (.75,-4) to [out=90,in=0] 
		(0,-2.5) to [out=180,in=90] (-.75,-4);
	\draw[very thick] (2,-4) -- (2,-1);
	\draw[very thick] (-2,-4) -- (-2,-1);
	\draw[very thick,directed=.55] (2,-4) -- (.75,-4);
	\draw[very thick,directed=.55] (-.75,-4) -- (-2,-4);
	\draw[very thick,directed=.55] (.75,-4) .. controls (.5,-3.5) and (-.5,-3.5) .. (-.75,-4);
	\draw[very thick,directed=.55] (.75,-4) .. controls (.5,-4.5) and (-.5,-4.5) .. (-.75,-4);
	\node [red, opacity=1]  at (1.25,-1.25) {$a{+}b$};
	\node[blue, opacity=1] at (-.25,-3.4) {$b$};
	\node[blue, opacity=1] at (.25,-4.1) {$a$};
\end{tikzpicture}
\longleftrightarrow
\left\{
\begin{aligned}
R^{a,b} &\to R^{a+b} \\
f &\mapsto \partial_{a,b}(f)
\end{aligned} \right. 
\\
\zip :=
\begin{tikzpicture} [scale=.45,fill opacity=0.2,anchorbase,tinynodes]
	\path [fill=red] (4.25,2) to (4.25,-.5) to [out=170,in=10] (-.5,-.5) to (-.5,2) to
		[out=0,in=225] (.75,2.5) to [out=270,in=180] (1.625,1.25) to [out=0,in=270] 
			(2.5,2.5) to [out=315,in=180] (4.25,2);
	\path [fill=red] (3.75,3) to (3.75,.5) to [out=190,in=350] (-1,.5) 
		to (-1,3) to [out=0,in=135] (.75,2.5) to [out=270,in=180] (1.625,1.25) 
			to [out=0,in=270] (2.5,2.5) to [out=45,in=180] (3.75,3);
	\path[fill=blue] (2.5,2.5) to [out=270,in=0] (1.625,1.25) to [out=180,in=270] (.75,2.5);
	\draw [very thick,directed=.55] (4.25,-.5) to [out=170,in=10] (-.5,-.5);
	\draw [very thick, directed=.55] (3.75,.5) to [out=190,in=350] (-1,.5);
	\draw [very thick, red, directed=.75] (2.5,2.5) to [out=270,in=0] (1.625,1.25) 
		to [out=180,in=270] (.75,2.5);
	\draw [very thick] (3.75,3) to (3.75,.5);
	\draw [very thick] (4.25,2) to (4.25,-.5);
	\draw [very thick] (-1,3) to (-1,.5);
	\draw [very thick] (-.5,2) to (-.5,-.5);
	\draw [very thick,directed=.55] (2.5,2.5) to (.75,2.5);
	\draw [very thick,directed=.55] (.75,2.5) to [out=135,in=0] (-1,3);
	\draw [very thick,directed=.55] (.75,2.5) to [out=225,in=0] (-.5,2);
	\draw [very thick,directed=.55] (3.75,3) to [out=180,in=45] (2.5,2.5);
	\draw [very thick,directed=.55] (4.25,2) to [out=180,in=315] (2.5,2.5);
	\node [blue, opacity=1]  at (1.625,2) {$a{+}b$};
	\node[red, opacity=1] at (3.5,2.6) {$b$};
	\node[red, opacity=1] at (4,1.75) {$a$};		
\end{tikzpicture}
\longleftrightarrow
\left\{
\begin{aligned}
R^{a,b} &\to R^{a,b} \otimes_{R^{a+b}} R^{a,b}  \\
1 &\mapsto \Schur_{b^a}(\leftX_1 - \rightX_2)
\end{aligned} \right.
\, &, \quad 
\ \cre :=
\begin{tikzpicture} [scale=.5,fill opacity=0.2,anchorbase,tinynodes]
	\path[fill=blue] (-.75,4) to [out=270,in=180] (0,2.5) to [out=0,in=270] 
		(.75,4) .. controls (.5,4.5) and (-.5,4.5) .. (-.75,4);
	\path[fill=red] (-.75,4) to [out=270,in=180] (0,2.5) to [out=0,in=270] (.75,4) -- 
		(2,4) -- (2,1) -- (-2,1) -- (-2,4) -- (-.75,4);
	\path[fill=blue] (-.75,4) to [out=270,in=180] (0,2.5) to [out=0,in=270] 
		(.75,4) .. controls (.5,3.5) and (-.5,3.5) .. (-.75,4);
	\draw[very thick, directed=.55] (2,1) -- (-2,1);
	\path (.75,1) .. controls (.5,.5) and (-.5,.5) .. (-.75,1); 
	\draw [very thick, red, directed=.65] (-.75,4) to [out=270,in=180] 
		(0,2.5) to [out=0,in=270] (.75,4);
	\draw[very thick] (2,4) -- (2,1);
	\draw[very thick] (-2,4) -- (-2,1);
	\draw[very thick,directed=.55] (2,4) -- (.75,4);
	\draw[very thick,directed=.55] (-.75,4) -- (-2,4);
	\draw[very thick,directed=.55] (.75,4) .. controls (.5,3.5) and (-.5,3.5) .. (-.75,4);
	\draw[very thick,directed=.55] (.75,4) .. controls (.5,4.5) and (-.5,4.5) .. (-.75,4);
	\node [red, opacity=1]  at (1.4,3.5) {$a{+}b$};
	\node[blue, opacity=1] at (.25,3.4) {$a$};
	\node[blue, opacity=1] at (-.25,4) {$b$};
\end{tikzpicture}
\longleftrightarrow
\left\{
\begin{aligned}
R^{a+b} &\to R^{a,b} \\
1 &\mapsto 1
\end{aligned} \right.
\end{align*}

\section{Some \texorpdfstring{$\Hom$}{Hom}-space computations}

In \cite[Section 5.4]{KLMS}, a basis is computed for certain $\Hom$-spaces in 
the categorified quantum group $\USd(\slnn{2})$.
This implies the following result, by computing the degree of basis elements.

\begin{prop}\label{prop:LDH}
Let $x,y,p \in \N$ and
suppose that $\lambda+y-x+p \geq 0$.
Up to scalar multiple, there is a unique 
lowest degree $2$-morphism in 
$\Hom_{\USd(\slnn{2})}(\F^{(x+p)}\E^{(y+p)} \ooone_\lambda , \F^{(x)}\E^{(y)} \ooone_{\lambda})$
of degree $p(\lambda+y-x+p)$.
\end{prop}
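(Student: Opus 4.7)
The plan is to reduce the statement to the explicit $\Hom$-space basis established in \cite[Section 5.4]{KLMS}. For any pair of composable divided powers of $\E$ and $\F$ with matching target weights, KLMS give a basis for the relevant $\Hom$-space in $\USd(\slnn{2})$ parametrized by a triple of partitions $(\alpha,\beta,\gamma)$. Geometrically, the basis elements are obtained by taking the thick cap/cup that reduces strand labels by $p$, decorating the two resulting thick strands by Schur polynomials $\mathfrak{s}_\alpha$ and $\mathfrak{s}_\beta$ (for $\alpha,\beta$ fitting in prescribed rectangles), and adjoining a fake-bubble whose decoration is indexed by a partition $\gamma$ that fits inside a rectangle whose shape depends on $\lambda$, $x$, $y$, $p$.

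First, I would specialize this general basis to the case at hand, namely $\Hom_{\USd(\slnn{2})}(\F^{(x+p)}\E^{(y+p)}\ooone_\lambda,\F^{(x)}\E^{(y)}\ooone_\lambda)$. The difference of $\E$- and $\F$-strand labels between source and target is $p$ on each side, so the underlying ``geometric skeleton'' of every basis element is a single thick $p$-fold cap together with the corresponding thick $p$-fold cup (or equivalently a thick crossing together with a bubble, depending on the presentation), and the remaining freedom is captured by the triple $(\alpha,\beta,\gamma)$.

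Next, I would compute the degrees of these basis elements using the standard degree formulas for thick-calculus generators: each Schur decoration $\mathfrak{s}_\alpha$, $\mathfrak{s}_\beta$ contributes $2|\alpha|$, $2|\beta|$, the fake-bubble decoration contributes $2|\gamma|$, and the underlying thick cap/cup/bubble skeleton contributes a fixed base degree depending only on $\lambda$, $x$, $y$, $p$. A direct calculation (along the lines of the degree bookkeeping in \cite[Section 5]{KLMS}) shows that this base degree equals $p(\lambda+y-x+p)$, attained by the unique basis element with $\alpha=\beta=\gamma=\emptyset$. Uniqueness up to scalar then follows from the basis property, since every other basis element picks up a strictly positive contribution from one of $|\alpha|$, $|\beta|$, $|\gamma|$.

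The main obstacle is purely bookkeeping: reconciling the conventions of \cite{KLMS} (which are formulated for $\USd(\slnn{2})$ with a fixed choice of orientation and particular fake-bubble normalizations) with the conventions in the present paper, and verifying that the hypothesis $\lambda+y-x+p\geq 0$ is exactly the condition guaranteeing that the minimal fake-bubble has non-negative total degree and hence the $(\emptyset,\emptyset,\emptyset)$ basis element is nonzero. Once these conventions are aligned, the statement is an immediate consequence of the basis theorem and the degree computation.
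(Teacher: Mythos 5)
Your proposal is correct and matches the paper's approach exactly: the paper also deduces this proposition directly from the basis theorem of \cite[Section 5.4]{KLMS} by computing the degrees of basis elements, observing that the element with all decorating partitions empty achieves the minimum degree $p(\lambda+y-x+p)$ and is the unique basis element in that degree. Your write-up in fact supplies more detail than the paper, which simply cites KLMS and leaves the degree bookkeeping implicit.
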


It is known, e.g. from \cite[Theorem 9]{WebSchur}, that the $2$-functor 
$\Phi \colon \USd(\slnn{2}) \to \SSBim$ is full\footnote{In fact, the failure of this $2$-functor 
to be full is due to the fact that 
$\End_{\USd(\slnn{2})}(\ooone_\lambda) \cong \Lambda \xmapsto{\Phi} \Sym(\B - \Fr)$. 
See e.g. \eqref{eq:bubble}. It becomes full after extending scalars in 
$\End_{\USd(\slnn{2})}(\ooone_\lambda) \cong \Lambda$ to $\Lambda \otimes \Lambda$.} 
in lowest degree. 
Thus, Proposition \ref{prop:LDH} has the following implications for 
$\Hom$-spaces between singular Soergel bimodules.

\begin{cor}\label{cor:Choms}
Let $a,b,d \in \N$, then up to scalar
\[
\chi_0^+ \in \Hom_{\SSBim}(\F^{(d-k)}\E^{(b-k)} \oone_{a,b} , \F^{(d-k-1)}\E^{(b-k-1)} \oone_{a,b})
\]
is the unique map of lowest degree. (It has degree $a-d+1$.)
\end{cor}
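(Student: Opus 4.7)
The plan is to deduce the corollary directly from Proposition~\ref{prop:LDH} together with the fullness of the $2$-functor $\Phi \colon \USd(\slnn{2}) \to \SSBim$ in lowest degree, as stated in the paragraph preceding the corollary.

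First, I would identify the parameters in Proposition~\ref{prop:LDH} that correspond to the $\Hom$-space in question. Setting $x = d-k-1$, $y = b-k-1$, $p = 1$, and recognizing that the $\slnn{2}$-weight associated to the $\glnn{2}$-weight $(a,b)$ is $\lambda = a-b$, one computes
\[
p(\lambda + y - x + p) = (a-b) + (b-k-1) - (d-k-1) + 1 = a-d+1,
\]
matching the claimed degree. Under the assumption $\lambda + y - x + p = a-d+1 \geq 0$, Proposition~\ref{prop:LDH} tells us that the $\Hom$-space
\[
\Hom_{\USd(\slnn{2})}\bigl(\F^{(d-k)}\E^{(b-k)} \ooone_{a-b},\, \F^{(d-k-1)}\E^{(b-k-1)} \ooone_{a-b}\bigr)
\]
contains a unique (up to scalar) $2$-morphism of lowest nonzero degree $a-d+1$.

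Second, I would observe that by construction in \eqref{eq:chi plus}, $\chi^+_0$ is (up to a sign) the image under $\Phi$ of the specific cup $2$-morphism in $\USd(\slnn{2})$ appearing in the formula there; this cup has degree $a-d+1$, so it spans the lowest-degree subspace of the relevant $\Hom$-space in $\USd(\slnn{2})$. Invoking the fact that $\Phi$ is full in lowest degree, the space of $2$-morphisms of degree $< a-d+1$ in $\SSBim$ must vanish, and the space of $2$-morphisms of degree $a-d+1$ in $\SSBim$ is precisely the image of the corresponding $\USd(\slnn{2})$-space. Thus $\chi^+_0$ spans the lowest-degree subspace of $\Hom_{\SSBim}\bigl(\F^{(d-k)}\E^{(b-k)}\oone_{a,b},\, \F^{(d-k-1)}\E^{(b-k-1)}\oone_{a,b}\bigr)$, as claimed.

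The main subtlety will be the edge cases $a-d+1 < 0$ or where one of the divided powers $\F^{(d-k)}$, $\E^{(b-k)}$, etc., is evaluated outside its nonzero range on the weight $(a,b)$. In the latter situation, the source or target $1$-morphism is zero and the $\Hom$-space is trivially zero, so the statement holds vacuously. In the former, one must argue that the $\Hom$-space is still governed by the analogous lowest-degree computation on the dual side of the $\slnn{2}$ weight lattice; this can be handled by applying the natural biadjunction between $\E^{(m)}$ and $\F^{(m)}$ in $\USd(\slnn{2})$ to reduce to the case $a-d+1 \geq 0$ before invoking Proposition~\ref{prop:LDH} and fullness of $\Phi$ in lowest degree as above.
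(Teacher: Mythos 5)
Your proof is correct and takes essentially the same approach as the paper: the paper simply states that Corollary~\ref{cor:Choms} follows from Proposition~\ref{prop:LDH} together with fullness in lowest degree of $\Phi$, and your parameter matching ($x = d-k-1$, $y = b-k-1$, $p=1$, $\lambda = a-b$) and degree computation $p(\lambda+y-x+p) = a-d+1$ make this explicit, agreeing with the degree of $\chi^+_0$ computed from \eqref{eq:chi plus}. Your final paragraph about the edge case $a-d+1<0$ (or out-of-range divided powers) goes beyond what the paper says; the paper leaves the hypothesis of Proposition~\ref{prop:LDH} implicit, and in every application of the corollary (Lemma~\ref{lem:Cautis+} and Proposition~\ref{prop:UniqueYCrossing}) one has $a \geq d$, so $a-d+1 \geq 1 > 0$ and the hypothesis holds automatically. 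Your suggestion of reducing via biadjunction for the degenerate range is a reasonable extra precaution but is only sketched and is not needed for the paper's purposes.
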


\begin{cor}\label{cor:HomForSR}
Let $a,b,c,d,k,p \in \N$. 
Suppose that $k+p \leq \min(b,d-1)$, then
\[
\Hom(\qdeg^r \F^{(d-k-1)} \E^{(b-k)} \oone_{a,b}, \qdeg^s \F^{(d-k-p-1)} \E^{(b-k-p)} \oone_{a,b}) \cong 
\begin{cases}
\K & \text{if } r-s = p (a-d+p+1) \\
0 & \text{if } r-s < p (a-d+p+1) \, .
\end{cases}
\]
\end{cor}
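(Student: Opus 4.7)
The plan is to reduce Corollary~\ref{cor:HomForSR} directly to Proposition~\ref{prop:LDH} via the $2$-functor $\Phi\colon \USd(\slnn{2})\to \SSBim$. First, I would set $x := d-k-p-1$, $y := b-k-p$, and $\lambda := a-b$, so that under $\Phi$,
\[
\F^{(x+p)}\E^{(y+p)}\ooone_\lambda \longmapsto \F^{(d-k-1)}\E^{(b-k)}\oone_{a,b}
\, , \quad
\F^{(x)}\E^{(y)}\ooone_\lambda \longmapsto \F^{(d-k-p-1)}\E^{(b-k-p)}\oone_{a,b} \, .
\]
The hypothesis $k+p\leq \min(b,d-1)$ ensures $x,y\geq 0$, while the quantity $\lambda+y-x+p = a-d+p+1$ is non-negative in the cases of interest (otherwise the target bimodule is zero and there is nothing to prove). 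Proposition~\ref{prop:LDH} then furnishes, up to scalar, a unique lowest-degree $2$-morphism between these $1$-morphisms in $\USd(\slnn{2})$, of degree precisely $p(a-d+p+1)$, and no $2$-morphisms of strictly smaller degree.

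Next, I would transfer this statement to $\SSBim$ using the fullness of $\Phi$ in lowest degree, as recalled just before the corollary. Concretely, as the footnote there explains, the passage from $\USd(\slnn{2})$-hom to $\SSBim$-hom amounts to an extension of scalars from $\Lambda$ to $\Lambda\otimes \Lambda$; since this extension is concentrated in non-negative degrees with one-dimensional degree-zero part, it preserves both the degree and the dimension of the lowest-degree hom. Combined with the first step, this yields that the relevant $\SSBim$-hom is one-dimensional in the unshifted degree $p(a-d+p+1)$, which—after accounting for the shifts $\qdeg^r$ and $\qdeg^s$ (so that degree-zero maps correspond to unshifted maps of degree $r-s$)—gives the first case of the claim. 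For the vanishing when $r-s<p(a-d+p+1)$, a nonzero morphism in $\SSBim$ of degree strictly below $p(a-d+p+1)$ would, by the same extension-of-scalars identification, force a nonzero $2$-morphism in $\USd(\slnn{2})$ of degree strictly below the minimum of Proposition~\ref{prop:LDH}, a contradiction.

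The main obstacle I anticipate is making the ``fullness in lowest degree'' step fully rigorous. The naive fullness of $\Phi$ fails (as the footnote warns), so the argument hinges on the precise fact that the extra scalars distinguishing $\SSBim$-homs from $\USd(\slnn{2})$-homs live entirely in non-negative degree and contribute only a copy of $\K$ in degree zero. Extracting a clean statement of this fact from the existing literature, or from the foam-theoretic description recalled in Appendix~\ref{s:FoamSSBim}, is the delicate part of the argument; once it is in hand, the rest of the proof is essentially bookkeeping of degrees.
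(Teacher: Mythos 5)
Your proposal is exactly the argument the paper has in mind: the paper states Corollary~\ref{cor:HomForSR} as an ``implication'' of Proposition~\ref{prop:LDH} via the lowest-degree fullness of $\Phi$, without spelling out the substitution, and your choice $x=d-k-p-1$, $y=b-k-p$, $\lambda=a-b$ together with the degree bookkeeping is precisely what is needed. The one caveat is your parenthetical ``otherwise the target bimodule is zero'': if $a-d+p+1<0$ the target need not vanish (e.g.~it only vanishes when $d>a+b+1$), so strictly speaking Proposition~\ref{prop:LDH} is not applicable outside its stated hypothesis $\lambda+y-x+p\geq 0$; however, the paper also leaves this implicit, and in the paper's sole application (Lemma~\ref{lem:Cautis+}) one always has $a-d=\ell\geq 0$, so $a-d+p+1\geq 1$ and the issue never arises.
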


\bibliographystyle{alpha}

\end{document}